\documentclass[a4paper,11pt]{article}
\usepackage[utf8]{inputenc} 
\usepackage[english]{babel}
\usepackage[T1]{fontenc}   
\usepackage[normalem]{ulem}
\usepackage{mathpazo}
\usepackage{graphicx}
\usepackage{comment}
\usepackage{amsthm,amsmath,amsfonts,stmaryrd,mathrsfs,amssymb,mathtools}
\usepackage{mathtools}
\usepackage{bbm}
\usepackage{enumitem}
\usepackage{chemarrow}
\usepackage{tikz}
\usepackage[top=2.5cm, bottom=2.5cm, left=2cm, right=2cm]{geometry}
\usepackage{subfig}
\usepackage{multirow}
\usepackage{physics}
\usepackage{array}
\usepackage{bigints}
\usepackage{etoolbox}
\usepackage{mdframed}
\usepackage{color}
\usepackage{booktabs}
\usepackage{textgreek}
\usepackage[unicode,colorlinks=true,linkcolor=blue,citecolor=red,pdfencoding=auto,psdextra]{hyperref}
 \usepackage{dsfont}
\usepackage{mdframed}
 
\usepackage{enumitem}
\usepackage{amsmath}
\usepackage{upgreek}
\usepackage{commath}
\usepackage{amssymb}
\usepackage{xcolor}
\usepackage{amsthm}
\usepackage{csquotes}
\usepackage{titlesec}
\makeatletter
\newcommand{\optionaldesc}[2]{%
  \phantomsection
  #1\protected@edef\@currentlabel{#1}\label{#2}%
}
\makeatother

\newtheorem{theorem}{Theorem}[section]
\newtheorem{lemma}[theorem]{Lemma}
\newtheorem{proposition}[theorem]{Proposition}
\newtheorem{corollary}[theorem]{Corollary}
\newtheorem{remark}[theorem]{Remark}
\newtheorem{definition}[theorem]{Definition}

\newcommand{\E}{\mathbb{E}}
\DeclareRobustCommand{\P}{\mathbb{P}}

\newcommand{\Var}{\mathrm{Var}}
\newcommand{\dTV}{d_{\mathrm{TV}}}
\newcommand{\T}{\mathcal{T}}
\newcommand{\Po}{\mathrm{Poisson}}
\newcommand{\Obig}{\mathcal{O}}

\newcommand{\N}{\mathbb{N}}
\newcommand{\Z}{\mathbb{Z}}
\newcommand{\R}{\mathbb{R}}
\newcommand\Es[1]{\mathbb{E}\left[#1\right]}
\renewcommand\Pr[1]{\mathbb{P}\left(#1\right)}

\title{Rare subtree patterns in size-conditioned Bienaymé trees: Poisson approximation and declumping}

\author{
Igor Kortchemski 
\thanks{CNRS \& DMA, École normale supérieure, PSL University, 75005 Paris, France, \textsf{igor.kortchemski@math.cnrs.fr}
} 
\qquad
Leonard Vetter 
\thanks{IOR, Karlsruhe Institute of Technology, Kaiserstr.
12, 76131 Karlsruhe, Germany, \textsf{leonard.vetter@kit.edu}
}   
}\date{}

\begin{document}
\maketitle

\begin{abstract}
We establish a general Poisson approximation for rare local patterns in
critical Bienaym\'e--Galton--Watson trees with offspring distribution $\mu$ in
the domain of attraction of a stable law, conditioned to have a large number of vertices. A
pattern is specified by a sequence-dependent mark on fringe subtrees. If
marked fringe subtrees remain microscopic and nearby marked occurrences have
negligible clustering, then their count is asymptotically Poisson in
total variation whenever its mean remains bounded; when the mean diverges, the
count satisfies a law of large numbers. The main difficulty is the global
dependence created by size conditioning. We overcome it by combining the
cyclic-shift representation with a refined form of the Chen--Stein bound and a bridge-removal estimate
controlling the interaction between a local mark and the remainder of the
conditioned random walk.

For non-fringe patterns, overlapping occurrences may form clusters and the raw
count need not be asymptotically Poisson. We introduce declumped indicators
which select boundary witnesses of these clusters and prove a general Poisson
approximation for their count. As applications, we obtain sharp asymptotics
for the maximum leaf-height, equivalently the maximum protection number, and for the height of the largest complete
$r$-ary tree appearing as a non-fringe subtree. Unary-chain maxima, and the
maximum leaf-height when $\mu_1>0$, exhibit lattice-modulated Gumbel
behavior. Complete $r$-ary patterns for $r\ge2$, and the maximum leaf-height when $\mu_1=0$, are localized on one or two consecutive integers. The
results require no exponential moment and include offspring distributions
with infinite variance.
\end{abstract}

%==================================================================
\section{Introduction}\label{sec:intro}
%==================================================================

Rare local patterns in a size-conditioned Bienaymé tree involve two distinct sources of dependence. Conditioning on the total size creates a global dependence between otherwise local configurations, while overlapping non-fringe occurrences may create strong local clustering. Either phenomenon can invalidate a direct Poisson approximation. The purpose of this paper is to separate these two obstructions and provide general criteria for overcoming them.

The local structure and fringe-subtree statistics of random trees have been
studied from several viewpoints, including fringe limits, additive
functionals, protection parameters, and counts of repeated or distinct fringe
subtrees
\cite{Ald91,BJ01,Jan12,DJ14,DW15,Jan16,Jan21,FS22,BW22,FSW24,RR26}.
Poisson approximation for rare fringe-subtree counts, which is closest to our
first main result, was developed in \cite{CD17,BHJ26}.

Our first contribution, Theorem \ref{thm:PoissonApproximation}, is a general
Poisson approximation for rare sequence-dependent marks on fringe subtrees.
The assumptions isolate two natural properties: marked fringe subtrees must
remain microscopic in the conditioned tree, and nearby marked occurrences
must not form significant clusters. The relation with \cite{CD17,BHJ26} is complementary rather than one of strict
inclusion. Cai--Devroye treat growing fringe families and complete non-fringe
patterns in the finite-variance setting. Berzunza--Holmgren--Janson treat
several structured growing fringe counts and also obtain explicit
finite-$n$ total-variation bounds and normal regimes. Our criterion is
formulated for general sequence-dependent marks under stable offspring laws
and is coupled with a separate declumping theorem for clustered non-fringe
patterns.

Our second contribution, Theorem \ref{thm:PoissonApproximation2}, is a
root-declumping principle for non-fringe patterns. A direct Poisson
approximation for their raw count may fail because a single configuration can
create a cluster of overlapping occurrences, as already observed for complete
unary non-fringe subtrees by Cai and Devroye \cite{CD17}. We instead count
boundary witnesses of these clusters and prove a Poisson approximation for the
resulting declumped count.

Finally, we apply the framework to the maximum leaf-height, equivalently the
maximum protection number, and to the height of the largest complete $r$-ary
tree appearing as a non-fringe subtree. Unary-chain maxima, and the maximum
leaf-height when $\mu_1>0$, exhibit lattice-modulated Gumbel behavior.
Complete $r$-ary patterns for $r\ge2$, and the maximum leaf-height when
$\mu_1=0$, are localized on one or two consecutive integers. In particular,
the extension beyond finite variance and exponential moments is a consequence
of a broader probabilistic framework rather than the sole purpose of the
paper; see \cite{DGZ23,HSW24} for the previous leaf-height results.

\paragraph{Setting.} We consider Bienaym\'e trees (also often called Galton--Watson trees) with a critical offspring distribution $\mu=(\mu_j)_{j\ge0}$, that is, $\sum_{j\ge0}j\mu_j=1$, and assume $\mu_1\neq1$. We further assume that $\mu$ belongs to the domain of attraction of an $\alpha\in(1,2]$ stable law. This means that there exists a slowly varying function $\ell$ such that $ \sum_{j=0}^{n} j^{2} \mu_{j} =\ell(n) n ^{2-\alpha}$, and there exists an increasing sequence $(a_{n})$ such that if $(K_{i})_{i \geq 1}$ are i.i.d. random variables with law $\mu$,
\begin{equation}
\label{eq:an} \frac{K_{1}+ \cdots+K_{n}-n}{a_{n}}  \quad \mathop{\longrightarrow}^{(d)}_{n \rightarrow \infty} \quad Y_{\alpha},
\end{equation}
where $Y_{\alpha}$ has Laplace exponent given by $\Es{ \exp(- \lambda Y_{\alpha})}= \exp(\lambda ^ \alpha)$ for every $\lambda>0$, see Sec.~\ref{sec:toolbox} for details.

Let $\mathcal T$ be a $\mu$-Bienaym\'e tree. For $n\ge1$, let $\mathcal T^n$ denote a $\mu$-Bienaym\'e tree conditioned to have $n$ vertices, restricting implicitly to admissible values of $n$ such that the probability that the size of $ \mathcal{T}$ is $n$ is positive.

\subsection{Poisson approximation for marked fringe subtrees}

Our main result is a general Poisson approximation for the number of vertices of
$\mathcal T^n$ whose fringe subtree is \emph{marked} by a sequence of indicator functions on plane trees.
This result is useful in its own right, as it unifies and extends Poisson approximation results obtained in different contexts in the literature.

Given a plane tree $T$ and a vertex $v\in T$, by definition the fringe
subtree $T_v$ consists of $v$ and all its descendants. Let
$(G_k)_{k\ge1}$ be a sequence of $\{0,1\}$-valued functions on finite plane
trees, and fix a sequence $(k_n)$ of integers. We are interested in the asymptotic behavior of the quantity
$$
\#\{u\in\mathcal T^n : G_{k_n}(\mathcal T^n_u)=1\},
$$
which is the number of marked fringe subtrees of  $\mathcal T^n$ (if  $G_{k_n}(\mathcal T^n_u)=1$ we say that the fringe subtree  $\mathcal T^n_u$ is marked).

We first state the assumptions used in the limit theorems. Throughout, all the assumptions are stated with respect to a \emph{cutoff
sequence} $(M_n)$ of positive integers which satisfies 
\begin{center}
\fbox{
$M_n\to\infty$ and
$M_n=o(n)$.}
\end{center}
We write $|T|$ for the number of vertices of a plane tree $T$.
Whenever a conditional law given a marked event is used, we work along subsequences on which the corresponding marking probability is positive.

\paragraph{Main assumptions.} Although three assumptions are introduced below, their logical roles are simple. Assumption \ref{hyp:S} is a convenient structural criterion implying the microscopicity Assumption \ref{hyp:M}; Assumption \ref{hyp:C} is the non-clustering condition used for Poisson approximation.

\paragraph{Microscopicity assumptions.}
We first introduce conditions ensuring that marked fringe subtrees remain
microscopic in the conditioned tree.
\begin{mdframed}
\textbf{Assumption \textcolor{blue}{\optionaldesc{(M)}{hyp:M}} for $(G_k)$
along $(k_{n})$ with respect to $(M_{n})$: microscopicity.}
Let $U_n$ be a uniform vertex of $\mathcal T^n$, independent of
$\mathcal T^n$. Then, as $n \rightarrow \infty$,
$$
\P\bigl(G_{k_n}(\mathcal T^n_{U_n})=1\bigr)\sim \P(G_{k_n}(\mathcal T)=1)
\qquad \textrm{and} \qquad
\P\bigl(\ |\mathcal T^n_{U_n}|> M_n \mid G_{k_n}(\mathcal T^n_{U_n})=1\bigr)
=o(1).
$$
\end{mdframed}

The first asymptotic in \ref{hyp:M} is equivalent to saying that the
expected proportion of marked fringe subtrees of $\mathcal{T}^{n}$ is asymptotic to $\P(G_{k_n}(\mathcal T)=1)$ as
$n \rightarrow\infty$. The second condition says that,
among marked fringe subtrees seen from a uniform vertex of $\mathcal T^n$,
those whose size exceeds the cutoff $M_n$ are negligible. A standard choice of the cutoff sequence is $M_n = \lfloor n^{1-\eta} \rfloor$ for some $\eta \in (0,1)$ small enough.  See also
Lemma \ref{lem:M} for another simple criterion implying Assumption
\ref{hyp:M}.

The following stronger structural assumption is sometimes simpler to check in applications, and will also be used to verify Assumption \ref{hyp:M} for  declumped indicators. Recall that the sequence $(a_{n})$ is defined by \eqref{eq:an} (see also Sec.~\ref{sec:toolbox}). 

\begin{mdframed}
  \textbf{Assumption \textcolor{blue}{\optionaldesc{(S)}{hyp:S}} for
  $(G_k)$ along $(k_n)$ with respect to $(M_{n})$: structural microscopicity.} Under the conditional law
  $$
    \P_k(\,\cdot\,)\coloneqq\P(\,\cdot\mid G_k(\mathcal T)=1),
    \qquad
    \E_k[\,\cdot\,]\coloneqq\E[\,\cdot\mid G_k(\mathcal T)=1],
  $$
there exist nonnegative integer-valued random variables $C_k,L_k$ and
random trees $\mathcal T^{(1)},\mathcal T^{(2)},\ldots$ such that,
conditionally on $(C_k,L_k)$, the trees $\mathcal T^{(i)}$ are independent and
each has the same law as $\mathcal T$, and
$$
    |\mathcal T|
    =
    C_k+\sum_{i=1}^{L_k}|\mathcal T^{(i)}|
    \quad\text{under } \P_k$$
    and for every $\varepsilon>0$ we have $
a_n\P_{k_n}(C_{k_n}>\varepsilon M_n) \to 0$ and ${\E_{k_n}[L_{k_n}]}/{a_{M_n}} \to 0$.
\end{mdframed}

Assumption \ref{hyp:S} says that, conditionally on the event
$G_k(\mathcal T)=1$, the marked event can be witnessed by a small skeleton
with few open leaves. Hence marked fringe subtrees are unlikely to be large.

In practice, it is sometimes simpler to check the condition  $\E_{k_n}[C_{k_n}+L_{k_n}]
    =
    n^{o(1)}$ which does not involve $(a_{n})$, which then implies that Assumption \ref{hyp:S} holds for
  $(G_k)$ along $(k_n)$ with respect to $(M_{n})$  with a cutoff of the form $M_n=\lfloor n^{1-\eta}\rfloor$ for every $\eta\in(0,1-1/\alpha)$ (Lemma \ref{lem:critM}).  We will also later  show that Assumption \ref{hyp:S} implies Assumption
\ref{hyp:M} (Corollary \ref{cor:SM}).

\paragraph{Non-clustering assumption.}
We next introduce a condition ensuring that local clustering of marked
occurrences is negligible.
For a finite tree $T$, let $u_j(T)$ denote its $j$-th vertex in depth-first
order, whenever it exists. For $\ell\ge1$, let $A_n(\ell)$ be the
probability that the fringe subtrees at the root of $\mathcal T$ and at its
$(\ell+1)$-th vertex in depth-first order are both present and both marked:
$$
A_n(\ell)
=
\P\bigl(
|\mathcal T|\ge \ell+1,\
G_{k_n}(\mathcal T)=1,\
G_{k_n}(\mathcal T_{u_{\ell+1}(\mathcal T)})=1
\bigr).
$$
Assumption \ref{hyp:C} quantifies how strongly marked
fringe subtrees clump together.

\begin{mdframed}
  \textbf{Assumption \textcolor{blue}{\optionaldesc{(C)}{hyp:C}} for
  $(G_{k})$ along $(k_{n})$ with respect to $(M_{n})$: pair non-clustering.} We have
  $$ \sum_{\ell=1}^{M_n} A_n(\ell)=o\bigl(n\,\P(G_{k_n}(\mathcal T)=1)^2\bigr). $$
\end{mdframed}

A practical way to check this assumption is to show that $
\sup_{\ell\ge1} A_n(\ell)
=
 o \big( \frac{n}{M_{n}}\, \P(G_{k_n}(\mathcal T)=1)^2\big)$.

\paragraph{Main result.} We are now ready to state our main Poisson approximation result. 

\begin{theorem}
\label{thm:PoissonApproximation}
Assume that $\mu$ is critical and belongs to the domain of
attraction of an $\alpha$-stable law for some $\alpha\in(1,2]$.
Set $\pi_n=\P(G_{k_n}(\mathcal T)=1)$. Assume that Assumptions \ref{hyp:M} and \ref{hyp:C} hold for
$(G_k)$ along $(k_n)$ with respect to some cutoff sequence $(M_n)$.
\begin{enumerate}
\item[(i)]  If
$(n\pi_n)$ is bounded, then
\begin{equation}
\label{eq:DTVthm}
\dTV \left( \# \{u \in \mathcal{T}^{n} : G_{k_{n}}( \mathcal{T}^{n}_{u})=1\},\Po(n \pi_{n})\right)  \quad \mathop{\longrightarrow}_{n \rightarrow \infty} \quad 0.
\end{equation}
\item[(ii)]  If
$n\pi_n\to\infty$, then
$$
\frac{
\#\{u\in\mathcal T^n: G_{k_n}(\mathcal T^n_u)=1\}
}{n\pi_n}  \quad \mathop{\longrightarrow}^{(\P)}_{n \rightarrow \infty} \quad 1.$$
In particular,
$
\#\{u\in\mathcal T^n: G_{k_n}(\mathcal T^n_u)=1\} \rightarrow \infty$ in
probability.
\end{enumerate}
\end{theorem}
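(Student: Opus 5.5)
We encode $\mathcal T^n$ by its Łukasiewicz path and use the cyclic-shift (Vervaat) representation. Let $(K_1,\dots,K_n)$ be i.i.d.\ with law $\mu$, conditioned on $K_1+\cdots+K_n=n-1$. The tree $\mathcal T^n$ is obtained from the unique cyclic shift $\sigma$ that turns the sequence into a Łukasiewicz word. Conditionally on the multiset of cyclic shifts, the shift index is uniform. For a position $i\in\{1,\dots,n\}$, let $F_i$ be the tree coded by the shortest word $K_i,K_{i+1},\dots$ (indices mod $n$) that forms a Łukasiewicz word, whenever this word has length at most $n$. Then the vertices of $\mathcal T^n$ correspond bijectively to positions, and the fringe subtrees are exactly the $F_i$ computed on the cyclic sequence; this is the standard fact that fringe subtrees are read off cyclically. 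Hence
$$
X_n:=\#\{u\in\mathcal T^n: G_{k_n}(\mathcal T^n_u)=1\}=\sum_{i=1}^n I_i,\qquad I_i=\mathbf 1\{G_{k_n}(F_i)=1\},
$$
and the $I_i$ are exchangeable under the cyclic-invariant conditioned law. In particular $\E[X_n]=n\P(G_{k_n}(\mathcal T^n_{U_n})=1)\sim n\pi_n$ by the first part of \ref{hyp:M}. We then truncate by setting $\tilde I_i=I_i\mathbf 1\{|F_i|\le M_n\}$. The second part of \ref{hyp:M} gives $\E[X_n-\tilde X_n]=o(n\pi_n)$, so it suffices to prove both (i) and (ii) for $\tilde X_n$. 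In case (i) this truncation costs $o(1)$ in total variation, and in case (ii) it costs $o_{\P}(n\pi_n)$.

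\textbf{Chen--Stein step.} Each $\tilde I_i$ is a function of the window $W_i=(K_i,\dots,K_{i+M_n-1})$. We take the dependency neighbourhood $B_i=\{j:|j-i|_{\mathrm{cyc}}\le M_n\}$. The Arratia--Goldstein--Gordon bound gives
$$
\dTV(\tilde X_n,\Po(\E\tilde X_n))\le b_1+b_2+b_3,
$$
with $b_1=\sum_i\sum_{j\in B_i}\E\tilde I_i\E\tilde I_j\lesssim nM_n\pi_n^2$, with $b_2=\sum_i\sum_{j\in B_i\setminus\{i\}}\E[\tilde I_i\tilde I_j]$, and with $b_3=\sum_i\E|\E[\tilde I_i-\E\tilde I_i\mid (K_j)_{j\notin B_i}]|$. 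When $n\pi_n$ is bounded, $b_1=O(M_n/n)\to0$. For $b_2$, exchangeability reduces it to $2n\sum_{\ell=1}^{M_n}\E[\tilde I_1\tilde I_{1+\ell}]$. Two marked windows at distance $\ell$ either are nested, in which case $F_{1+\ell}$ is the fringe tree at the $(\ell+1)$-th vertex of $F_1$, or are disjoint. Removing the conditioning costs only a bounded factor: we use the bridge/local-limit estimate for $\P(S_{n-m}=n-1-s)/\P(S_n=n-1)$, uniformly in $m\le 2M_n=o(n)$ and in the typical range of $s$. The nested terms are then bounded by $O(n)\sum_{\ell\le M_n}A_n(\ell)=o((n\pi_n)^2)=o(1)$ by \ref{hyp:C}. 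The disjoint terms are bounded by $O(nM_n\pi_n^2)=o(1)$. In the refined version of the Chen--Stein bound used by the authors, the third term is replaced by a term controlled by total variation between conditional and unconditional window laws.

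\textbf{Main obstacle: the term $b_3$.} Here the global constraint $\sum K_j=n-1$ prevents true independence. We would bound $b_3$ by $n\,\E[\tilde I_1]$ times $\sup$ over marked windows of
$$
\Bigl|\frac{\P(S_{n-|B_1|}=n-1-s_{B_1})}{\P(S_n=n-1)}-1\Bigr|
$$
averaged over the law of $s_{B_1}$, the sum of the $K_j$ over $B_1$. This is the bridge-removal estimate. By the local limit theorem (Gnedenko) with scaling $a_n$, the ratio equals $1+o(1)$ as long as $s_{B_1}-|B_1|=o(a_n)$. Controlling the window sum given the mark is exactly where microscopicity enters, since a marked fringe subtree of size at most $M_n$ has a path increment of $-1$. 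An unmarked remainder of length $M_n$ has fluctuations $O(a_{M_n})=o(a_n)$. Heavy tails require care: single large jumps of order $a_n$ within the window have probability $O(M_n/n)\to0$, and the uniform bound on the local limit density handles them. This gives $b_3=o(1)$, completing (i) after replacing $\E\tilde X_n$ by $n\pi_n$, at a cost $|\E\tilde X_n-n\pi_n|=o(1)$.

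\textbf{Case (ii).} We show $\Var(\tilde X_n)=o((n\pi_n)^2)$ and apply Chebyshev. Write $\Var(\tilde X_n)=\E\tilde X_n+\sum_{i\ne j}\mathrm{Cov}(\tilde I_i,\tilde I_j)$, where $\E\tilde X_n=o((n\pi_n)^2)$. Pairs with $|i-j|\le M_n$ contribute at most $b_2+b_1=o((n\pi_n)^2)$ by the same estimates as above, now without assuming boundedness. For far pairs, the same bridge-removal estimate applied to two disjoint windows gives $\E[\tilde I_i\tilde I_j]=(1+o(1))\E\tilde I_i\E\tilde I_j$ uniformly. Hence the far-pair covariances sum to $o(n^2\pi_n^2)$, which gives (ii).
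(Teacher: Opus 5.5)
Your reduction to cyclic indicators, the truncation at $M_n$, the bounds on $b_1$ and $b_2$, and the variance argument for (ii) are sound and follow the paper. In each of these, the functionals involved live on a block of $O(M_n)=o(n)$ increments. The Radon--Nikodym factor $\Psi_{n-m}(-1-s)/\Psi_n(-1)$ is therefore bounded, and equals $1+o(1)$ on marked blocks.

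The gap is the $b_3$ step in (i). Under the bridge, the increments outside a block determine the block sum $s$ exactly. Hence $\E[\tilde I_1\mid (K_j)_{j\notin B_1}]=g(s)$, where $g(s)$ is the probability of the mark given the block sum, computed under the \emph{free} law. Your ratio only reweights the law of $s$ and leaves $g$ untouched. What your estimate controls is the distance between the bridge law and the free law of the block. It does not control the dependence between the block and the outside, which is a bridge whose endpoint is fixed by the block; the paper's refined term does not measure your quantity either.

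In fact the mixing-type $b_3$ you wrote need not vanish. Take $G_k(T)=\mathbbm 1_{\{|T|=k\}}$ with $M_n=k_n$, as in the proof of Corollary~\ref{cor:Poisson_fringe}. On the mark, the sum over $B_1$ (centred, $X_i=K_i-1$) is $-1+S'_{M_n+1}$ with $S'$ an independent free walk, whereas unconditionally it is distributed as $S_{2M_n+1}$. Your $b_3$ is then $2n\pi_n\,\dTV(S_{M_n+1},S_{2M_n+1})+o(1)$, which stays bounded away from $0$ when $n\pi_n\to\lambda>0$. Yet your proposed bound would give $o(1)$ in this example. (Also, $\tilde I_j$ for $j$ just below $i-M_n$ uses increments inside $B_i$, so conditioning on $(K_l)_{l\notin B_i}$ is not a valid replacement for the AGG conditioning.)

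The missing idea is to control how the outside depends on the block sum. The paper keeps the sharper AGG term $n\bigl|\E[(\tilde I_1-q_n)f_n(1+\sum_{j\notin B_1}\tilde I_j)\mid S_n=-1]\bigr|$ with $\|f_n\|_\infty\le1$. It then conditions on $S_{M_n}=k$, given which the block and the outside are independent. This rewrites the term as $n\bigl|\E[(\tilde I_1-q_n)(h_n(S_{M_n})-h_n(0))\mid S_n=-1]\bigr|$, where $h_n(k)$ is the mean of the test function over an outside bridge of length $n-M_n$ ending at $-1-k$. The paper then proves $\sup_{|k|\le r_n}|h_n(k)-h_n(0)|\to0$ for $r_n=\sqrt{a_{M_n}a_n}=o(a_n)$, in two steps:
\begin{itemize}
\item discarding the last $\lfloor\varepsilon n\rfloor$ outside indicators costs $O(\varepsilon nq_n)$;
\item the law of the remaining initial segment of the outside bridge is, by the local limit theorem, insensitive to endpoint shifts of size $o(a_n)$.
\end{itemize}
Marked blocks with $|S_{M_n}|>r_n$ are negligible by your own microscopicity remark.

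Alternatively, you can rescue a mixing-type $b_3$ by separating scales. Keep the truncation at $M_n$, but condition on the complement of a longer block $[i,i+L_n-1]$ with $M_n=o(L_n)$ and $L_n=o(n)$ (e.g.\ $L_n=\lfloor\sqrt{nM_n}\rfloor$), and take $B_i=\{j:i-M_n<j<i+L_n\}$ cyclically.
\begin{itemize}
\item On the mark, the block sum is then $-1+S'_{L_n-\zeta}$ with $\zeta\le M_n=o(L_n)$. This is $o(1)$-close in total variation to $S_{L_n}$, uniformly in $\zeta$.
\item Lemma~\ref{lem:error_est} and $q_n\sim p_n$ then give $b_3=o(n\pi_n)$.
\item The extra pairs in $b_1,b_2$ are at distance at least $M_n$, so their truncated windows are disjoint, and they contribute $O(nL_n\pi_n^2)=o(1)$.
\end{itemize}
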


Theorem \ref{thm:PoissonApproximation} is our basic Poisson approximation
result for rare marked fringe subtrees. It can be read as a general-purpose
criterion: once microscopicity and a suitable non-clustering condition have
been verified, the marked count is asymptotically Poisson in the bounded-mean
regime, and satisfies a law of large numbers in the divergent-mean regime.

Theorem \ref{thm:PoissonApproximation} recovers and extends several known
Poisson approximations for fringe subtree counts. We illustrate this in
Section \ref{sec:applications} through three direct applications. First, when
the marking function forces the size of the marked fringe subtree to be
$k_n$, the local clustering conditions become automatic. This gives
Corollary \ref{cor:Poisson_fringe}, which recovers the fringe-subtree results
of Cai--Devroye~\cite{CD17} and the degree-statistic version of
Berzunza--Holmgren--Janson~\cite{BHJ26}. Second, using Assumption
\ref{hyp:S} to verify microscopicity, Theorem
\ref{thm:PoissonApproximation} applies to raw non-fringe subtree counts in
regimes where overlapping occurrences do not create significant local
clusters. Third, the same approach gives Poisson approximations and a law of
large numbers for vertices with unusually large prescribed outdegree.

These applications should be contrasted with the extremal non-fringe problems
treated later in the paper. For the maximum leaf-height and for complete
unary subtrees, raw occurrences typically form clusters, and a direct Poisson
approximation for the raw count is no longer the right object. For these
problems we introduce declumped indicators in the next subsection and use
Theorem \ref{thm:PoissonApproximation2} instead.

\paragraph{A convenient sufficient criterion.}
Suppose that $\E_{k_n}[C_{k_n}+L_{k_n}]=n^{o(1)}$ and $
  \sup_{\ell\ge1}A_n(\ell)\le n^{o(1)}\pi_n^2$.
Then, for every fixed $\eta\in(0,1-1/\alpha)$, the assumptions of Theorem
\ref{thm:PoissonApproximation} hold with
$M_n=\lfloor n^{1-\eta}\rfloor$.

\subsection{Declumping}

Our main applications involve extremal statistics which are naturally expressed
in terms of non-fringe subtrees.  A direct Poisson approximation for raw non-fringe counts is in general false.
Indeed, raw non-fringe occurrences may form clusters. For instance, Cai and
Devroye~\cite[Lemma~5.5]{CD17} observed that, when
$n\mu_1^{k_n}\to\lambda>0$, the number of complete unary non-fringe subtrees
of height $k_n$ in $\mathcal T^n$ is not asymptotically Poisson with parameter
$\lambda$. Intuitively, a long unary chain creates many overlapping unary
chains of nearby heights. This is analogous to the clustering of long runs in
sequences of independent Bernoulli random variables.

To handle this phenomenon, we introduce declumped indicators. The idea is to
mark only boundary points of clusters. More precisely, let $(G_k)_{k\ge1}$ be
a sequence of $\{0,1\}$-valued functions on finite plane trees, and let
$B\subset\mathbb Z_+$. We define
$$
  \widehat{G}_k(T)
  =
  \mathbbm{1}_{\{k_\varnothing(T)\in B\}}
  \mathbbm{1}_{\{\exists u\in\Gamma_\varnothing(T):G_k(T_u)=1\}},
$$
where $\Gamma_\varnothing(T)$ denotes the set of children of the root of $T$.
Thus $\widehat G_k(T)=1$ means that the root of $T$ satisfies a degree
condition and has at least one child whose fringe subtree is marked by $G_k$.
In the applications, the set $B$ is chosen so that such vertices correspond to
the boundary of a cluster of raw occurrences. See Figure \ref{fig:intro} for an illustration of the declumped indicators, and observe that we count two vertices that have $H_{1,3}$ as non-fringe subtree, while the declumped count is only one.

\begin{theorem}
\label{thm:PoissonApproximation2}
Assume that $\mu$ is critical and belongs to the domain of
attraction of an $\alpha$-stable law for some $\alpha\in(1,2]$. Let $(G_k)$
be a sequence of $\{0,1\}$-valued functions on finite plane trees,  let
$(k_n)$ be a sequence of integers and let $(M_n)$ be a cutoff sequence. Set
$$
  \pi_n=\P(G_{k_n}(\mathcal T)=1).
$$
Let $B\subset\mathbb Z_+$ be such that
$$
  c_B
  =
  \E\left[
    k_\varnothing(\mathcal T)
    \mathbbm{1}_{\{k_\varnothing(\mathcal T)\in B\}}
  \right]
  \in(0,\infty).
$$
Define
$$
  \widehat{G}_k(T)
  =
  \mathbbm{1}_{\{k_\varnothing(T)\in B\}}
  \mathbbm{1}_{\{\exists u\in\Gamma_\varnothing(T):G_k(T_u)=1\}}.
$$
Assume that $(G_k)$ satisfies Assumption
\ref{hyp:S} along $(k_n)$ with respect to $(M_n)$, and that
$(\widehat G_k)$ satisfies Assumption \ref{hyp:C} along $(k_n)$ with respect
to the same cutoff sequence $(M_n)$.
\begin{enumerate}
\item[(i)] If $(n\pi_n)$ is bounded, then
$
  \dTV \big(
    \#\{u\in\mathcal T^n:\widehat{G}_{k_n}(\mathcal T^n_u)=1\},
    \Po(c_B n\pi_n)
  \big)
  \to 0$.
\item[(ii)] If $n\pi_n\to\infty$, then
$
{
\#\{u\in\mathcal T^n:
\widehat G_{k_n}(\mathcal T^n_u)=1\}
}
\to \infty$ in probability.
\end{enumerate}
\end{theorem}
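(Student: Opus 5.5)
The plan is to deduce the statement from Theorem \ref{thm:PoissonApproximation}, applied to the declumped family $(\widehat G_k)$ along $(k_n)$ with respect to $(M_n)$. Assumption \ref{hyp:C} for $(\widehat G_k)$ is part of the hypotheses, so three things remain: compute $\widehat\pi_n:=\P(\widehat G_{k_n}(\mathcal T)=1)$ and show $\widehat\pi_n\sim c_B\pi_n$; check Assumption \ref{hyp:M} for $(\widehat G_k)$; and handle the two regimes for $n\pi_n$.

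\emph{Step 1: the marking probability.} Condition on the root degree $k_\varnothing(\mathcal T)=j\in B$. The $j$ subtrees at the root are i.i.d.\ copies of $\mathcal T$, so
$$\widehat\pi_n=\sum_{j\in B}\mu_j\bigl(1-(1-\pi_n)^j\bigr).$$
Since $\pi_n\to0$ (which follows from Assumption \ref{hyp:S}, or directly from $n\pi_n$ bounded in case (i)), and since $1-(1-\pi)^j\le j\pi$ with $(1-(1-\pi)^j)/\pi\to j$, dominated convergence gives $\widehat\pi_n\sim\pi_n\sum_{j\in B}j\mu_j=c_B\pi_n$. In case (i), $n\widehat\pi_n$ is then bounded and $n\widehat\pi_n-c_Bn\pi_n\to0$. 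The Poisson total-variation continuity bound $\dTV(\Po(a),\Po(b))\le|a-b|$ therefore lets us replace $\Po(n\widehat\pi_n)$ by $\Po(c_Bn\pi_n)$. In case (ii), $n\widehat\pi_n\to\infty$, and Theorem \ref{thm:PoissonApproximation}(ii) gives divergence of the count in probability.

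\emph{Step 2: Assumption \ref{hyp:S} for $\widehat G$, hence \ref{hyp:M}.} Under $\P(\cdot\mid\widehat G_k(\mathcal T)=1)$, reveal the root degree $J\in B$ and the index $I$ of the first child whose subtree is marked. The children before $I$ are distributed as $\mathcal T$ conditioned on being unmarked. Choose the decomposition that is simplest to control. The marked child at $I$ is distributed under $\P_k$ and is decomposed as $C_k+\sum_{i\le L_k}|\mathcal T^{(i)}|$. The children after $I$ are free copies of $\mathcal T$ and become open leaves. The children before $I$, conditioned unmarked, are handled by a coupling: an unmarked copy equals a free copy with probability at least $1-\pi_k$. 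Since $\pi_{k_n}\to0$, we can either count them as free leaves (accepting a total-variation error) or, more cleanly, realize the tree as a mixture in which, with small probability, we add a bounded extra skeleton.

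This yields $\widehat C_k=1+C_k$ plus negligible terms and $\widehat L_k\le J-1+L_k$. The law of $J$ under the conditioning is approximately the size-biased law $j\mu_j\mathbbm 1_{B}(j)/c_B$, whose mean can be infinite when $\alpha<2$. This is the delicate point, since we need $\E[\widehat L]/a_{M_n}\to0$. I would therefore not go through \ref{hyp:S} blindly. Instead I would verify Assumption \ref{hyp:M} directly, via the criterion of Lemma \ref{lem:M} or the argument behind Corollary \ref{cor:SM}. Its proof needs only that the number of open leaves is $o(a_{M_n})$ in probability, or a truncated expectation, and $J$ is tight. Concretely, I would truncate at $J\le K$: the probability that $J>K$ is at most $\sum_{j>K}j\mu_j/c_B$ uniformly in $n$, which is small, and on $J\le K$ we get $\widehat L\le K+L_k$.

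\emph{Step 3: the first part of \ref{hyp:M}.} We need $\P(\widehat G_{k_n}(\mathcal T^n_{U_n})=1)\sim\widehat\pi_n$. Write this probability through the cyclic-shift/Kemperman representation as $\sum_m \P(\widehat G,\ |\mathcal T|=m)\,\P(S_{n-m}=\ldots)/\P(|\mathcal T|=n)$, as in the proof that \ref{hyp:S} implies \ref{hyp:M}. The ratio is close to $1$ for $m\le M_n$, and the contribution of $m>M_n$ is controlled by the large-size tail under the conditional law. Both facts come from the Step 2 decomposition together with the local limit theorem.

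I expect the main obstacle to be Step 2: the conditioning on unmarked earlier siblings, together with the possibly infinite mean of the size-biased root degree. The truncation in $J$ plus the coupling of unmarked and free copies should resolve it.
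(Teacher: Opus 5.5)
Your architecture matches the paper's. You use the formula $\widehat\pi_n=\E[\mathbbm{1}_{\{D\in B\}}(1-(1-\pi_n)^D)]$, check Assumption \ref{hyp:M} for $(\widehat G_k)$ directly rather than through \ref{hyp:S} (you rightly see that the size-biased root degree may have infinite mean), and then apply Theorem \ref{thm:PoissonApproximation} together with $\dTV(\Po(a),\Po(b))\le|a-b|$. The gap is in the bridge-weighted half of \ref{hyp:M}. By Corollary \ref{cor:cyclicshift}, both conditions in \ref{hyp:M} concern $\E[\mathbbm{1}_{\{\widehat G_{k_n}(\mathcal T)=1\}}\Psi_{n-|\mathcal T|}(0)]/\Psi_n(-1)$. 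The weight $\Psi_{n-|\mathcal T|}(0)/\Psi_n(-1)$ is of order $a_n$ when $|\mathcal T|$ is close to $n$. Your truncation only gives $\P(J>K\mid\widehat G_{k_n}(\mathcal T)=1)\le\varepsilon_K$; with the weight bounded by its supremum, this yields $O(a_n\varepsilon_K)$, not $O(\varepsilon_K)$. The criterion of Lemma \ref{lem:M} does not help either: a single free Bienaym\'e subtree, such as a sibling of the marked child, already has $a_n\P(|\mathcal T|>M_n)\asymp a_n/a_{M_n}\to\infty$. So the claim in Step 3 that the contribution of $m>M_n$ ``is controlled by the large-size tail under the conditional law'' fails as stated. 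On $\{J\le K\}$ you can rerun the bridge-weighted part of Lemma \ref{lem:consequenceM} with $L_{k_n}+K$ open leaves, but on $\{J>K\}$ you have no estimate.

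The missing ingredient is that integrating out the free siblings keeps the bridge weight bounded, uniformly in the root degree. Take a union bound over the marked child $i$ instead of the first marked child; this also removes the need to condition earlier siblings to be unmarked. Given $D=d$ and $\mathcal T_i=t$, the other $d-1$ subtrees form a free forest $\mathcal F_{d-1}$. Since the walk is skip-free downwards, $\E[\Psi_{m-|\mathcal F_{d-1}|}(0)]=\Psi_m(-(d-1))\le\sup_x\Psi_m(x)$ with $m=n-1-|t|$. Combined with the second estimate of Lemma \ref{lem:forest_estimates}, this bounds the bridge-weighted contribution of $\{|\mathcal F_{d-1}|>M_n/2\}$ by $C(1\wedge d/a_{M_n})$ for $|t|\le M_n/4$. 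The event $\{|t|>M_n/4\}$ is handled by Lemma \ref{lem:consequenceM}. Summing over $i\le d$ and $d\in B$ gives $\pi_n\bigl(o(1)+C\E[D\mathbbm{1}_{\{D\in B\}}(1\wedge D/a_{M_n})]\bigr)=o(\widehat\pi_n)$ by dominated convergence, using only $c_B<\infty$. This is Lemma \ref{lem:root_declumping}(iii), and it is what makes your ``truncated expectation'' intuition rigorous.

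Two smaller points. First, \ref{hyp:S} does not force $\pi_n\to0$: $G_k\equiv1$ satisfies it with $C_k=0$, $L_k=1$. So in case (ii) neither $\widehat\pi_n\sim c_B\pi_n$ nor your unmarked/free coupling is justified. Use instead $1-(1-p)^d\ge p$ for $d\ge1$, which gives $\widehat\pi_n\ge b_B\pi_n$ with $b_B=\P(k_\varnothing(\mathcal T)\in B,\ k_\varnothing(\mathcal T)\ge1)>0$, hence $n\widehat\pi_n\to\infty$. Second, the same inequality gives the correct uniform bound $\P(J>K\mid\widehat G_{k_n}(\mathcal T)=1)\le b_B^{-1}\sum_{j>K}j\mu_j$; with $c_B$ in place of $b_B$ your bound holds only asymptotically.
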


\begin{figure}[h]
    \centering
    \includegraphics[width=0.6\linewidth]{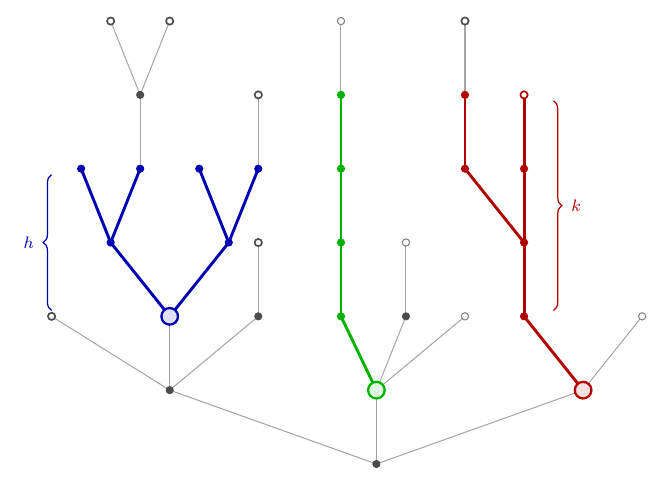}
\caption{Non-fringe subtrees and leaf-height. The fringe subtree at the blue
vertex supports a non-fringe copy of the complete $2$-ary tree of height $2$
(that is, $H_{2,2}\prec\mathcal T_v$). The green vertex has more than one child,
at least one of which supports a non-fringe copy of the complete $1$-ary tree of
height $4$ (a unary chain, $H_{1,4}$). The red vertex has more than one child, at
least one of which has leaf-height $3$.} 
\label{fig:intro}
\end{figure}

\paragraph{Application to largest complete $r$-ary subtrees.}

Our first application concerns the height of the largest complete $r$-ary tree
appearing as a non-fringe subtree of $\mathcal T^n$, motivated by results of Cai \& Devroye \cite{CD17}, who
studied the value $h_{n}$ such that with high probability $ \mathcal{T}^{n}$ contains  all  complete $r$-ary trees of height at most $h_{n}$ as fringe subtrees, as well as the height of the maximal complete $r$-ary non-fringe subtree in $ \mathcal{T}^{n}$.

More precisely, for a
finite plane tree $T$, set
$$
H_r(T)
=
\max \{k \geq 0 : \exists u \in T : H_{r,k}\prec T_u\},
$$
where $H_{r,k}$ denotes the complete $r$-ary tree of height $k$ (with all leaves at graph distance $k$ from the root), and where
$T'\prec T$ means that $T$ can be obtained from $T'$ by grafting plane trees
onto the leaves of $T'$. See Figure \ref{fig:intro} for an illustration.

\begin{theorem}
\label{thm:max_string}Assume that $\mu$ is critical and belongs to the domain of attraction of a stable law.
\begin{enumerate}
\item[(i)] Assume that $\mu_{1}>0$.  Set
$$t_n^* = \frac{1}{\log(1/\mu_{1})} \log n + \frac{1}{\log(1/\mu_{1})} \log(1-\mu_1).$$
For every $t \in \mathbb{R}$ we have 
$$\P\bigl(H_{1} ( \mathcal{T}^{n})  \leq \lfloor t_n^* + t\rfloor\bigr) - \exp\bigl(-\mu_1^{t+1 - \{t_n^* + t\}}\bigr)   \quad \mathop{\longrightarrow}_{n \rightarrow \infty} \quad  0$$
\item[(ii)] Let $r\ge2$ be such that $\mu_r>0$, and set
$$
  t_n^{**}=\log_r\log n-
  \log_r\left(\frac{\log(1/\mu_r)}{r-1}\right).
$$
Write $m_n=\lfloor t_n^{**}\rfloor$ and
$\theta_n=\{t_n^{**}\}$, where $\{x\} = x - \lfloor x \rfloor$ is the fractional part, and define
$$
  J_n=
  \begin{cases}
    \{m_n-1,m_n\},&\theta_n\le\tfrac12,\\
    \{m_n,m_n+1\},&\theta_n>\tfrac12.
  \end{cases}
$$
Then $
  \P(H_r(\mathcal T^n)\in J_n)\longrightarrow1$.
Moreover, if $\liminf_n\min(\theta_n,1-\theta_n)>0$, then
$\P(H_r(\mathcal T^n)=m_n)\to1$.
\end{enumerate}
\end{theorem}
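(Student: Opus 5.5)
We reduce both parts to the declumping result, Theorem \ref{thm:PoissonApproximation2}, through the identity $\{H_r(\mathcal T^n)\ge k\}=\{N_n(k)\ge1\}$, where $N_n(k)$ is a suitable declumped count.

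\textbf{Part (i).} Let $G_k(T)=\mathbbm 1_{\{H_{1,k}\prec T\}}$, meaning the root starts a unary chain of length $k$. Then $\pi=\P(G_k(\mathcal T)=1)=\mu_1^k$. Take $B=\mathbb Z_+\setminus\{1\}$, so that $c_B=\E[k_\varnothing\mathbbm 1_{k_\varnothing\neq1}]=1-\mu_1$. A vertex $v$ has $\widehat G_k(\mathcal T^n_v)=1$ exactly when $v$ is not unary and some child of $v$ starts a unary chain of length $k$. Any maximal unary chain is preceded by a non-unary vertex, or it starts at the root. Hence, outside the event that the root of $\mathcal T^n$ itself starts a chain of length $k$, we have $H_1(\mathcal T^n)\ge k$ if and only if the declumped count is positive. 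That root event has probability $O(\mu_1^k)\to0$ for $k\to\infty$; this follows from the local limit at the root, since the law of $\mathcal T^n$ near the root is close to the Kesten tree.

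We then check the hypotheses with $k_n=\lfloor t_n^*+t\rfloor+1$:
\begin{itemize}
\item Assumption \ref{hyp:S} holds with $C_k=k$ and $L_k=1$. Indeed, under $\P_k$ the tree is a chain of $k$ unary vertices followed by an independent copy of $\mathcal T$. With $M_n=\lfloor n^{1-\eta}\rfloor$, one has $k_n=O(\log n)$, so the criterion of Lemma \ref{lem:critM} applies.
\item Assumption \ref{hyp:C} holds for $\widehat G$. Two declumped witnesses at depth-first distance $\ell\le M_n$ from each other, one a descendant of the other, require two disjoint chain segments plus a branching vertex of the inner witness, which cannot lie inside the outer chain. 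This gives $A_n(\ell)\lesssim \mu_1^{2k_n}$ times a sum over the position of the second witness that stays controlled. If the second witness is not a descendant of the first, the events factor through the independent sibling subtrees, again giving order $\pi_n^2$. Either way the bound is $\sup_\ell A_n(\ell)\le n^{o(1)}\pi_n^2$, which is what the sufficient criterion after Theorem \ref{thm:PoissonApproximation} requires.
\end{itemize}

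Theorem \ref{thm:PoissonApproximation2}(i) then gives
\[
\P(H_1\le k_n-1)\approx\exp\bigl(-(1-\mu_1)n\mu_1^{k_n}\bigr).
\]
Substituting $n(1-\mu_1)=\mu_1^{-t_n^*}$ turns the exponent into $\mu_1^{k_n-t_n^*}=\mu_1^{t+1-\{t_n^*+t\}}$, which is bounded. The mean is bounded along the whole sequence, so the limit holds without passing to subsequences.

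\textbf{Part (ii).} Let $G_k(T)=\mathbbm 1_{\{H_{r,k}\prec T\}}$. Then $\pi_k=\P(G_k(\mathcal T)=1)=\mu_r^{(r^k-1)/(r-1)}$, because every internal vertex of $H_{r,k}$ must have exactly $r$ children. Here no declumping is needed: use Theorem \ref{thm:PoissonApproximation} directly. Assumption \ref{hyp:S} holds with $C_k=|H_{r,k}|$ (internal vertices) and $L_k=r^k$ open leaves. Both are $n^{o(1)}$, since at the relevant scale $r^{k}\asymp\log n$.

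For Assumption \ref{hyp:C}, two occurrences at depth-first distance $\ell$ either are disjoint, which gives order $\pi^2$, or overlap. In the overlapping case the second root lies inside the first copy at depth $d\ge1$, so the union forces about $(r^{k}-1)/(r-1)+ (r^{k}-r^{k-d})/(r-1)$ internal vertices. The probability is then at most $\pi\cdot\mu_r^{(r^k-r^{k-d})/(r-1)}$. Summing over the $r^d$ positions at depth $d$ gives
\[
\pi\sum_{d\ge1} r^d\mu_r^{r^{k-1}(r-1)/(r-1)\cdot(\dots)}\;\ll\; n\pi^2
\]
whenever $n\pi\gtrsim n^{-o(1)}$. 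This is the step I expect to be the most delicate: one has to verify that overlapping pairs cost at least $\mu_r^{c r^{k}}=n^{-c'}$ extra, which beats the factor $n\pi$.

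With this in place, Theorem \ref{thm:PoissonApproximation}(i)/(ii), via Poisson or the law of large numbers and monotonicity in $k$, reduces the question to locating where $n\pi_k$ passes from $\infty$ to $0$. Write
\[
\log(n\pi_k)=\log n-\frac{r^k-1}{r-1}\log(1/\mu_r)=\log n\,\bigl(1-r^{k-t_n^{**}}\bigr)+O(1).
\]
Hence $n\pi_{m_n-1}\to\infty$ once $\theta_n$ is bounded away from $0$, and $n\pi_{m_n+1}\to0$ once $\theta_n$ is bounded away from $1$. This gives $H_r=m_n$ with high probability, using that $H_r\ge k$ is equivalent to a positive count of $G_k$ occurrences.

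In the boundary cases the factor $\log n\,(1-r^{-\theta})$ can stay bounded. If $\theta_n\le\tfrac12$, then $n\pi_{m_n+1}\to0$, because $r^{1-\theta_n}\ge\sqrt r>1$; also $n\pi_{m_n-1}\to\infty$, because $r^{-1-\theta_n}\le r^{-1}$. So $H_r\in\{m_n-1,m_n\}$. The case $\theta_n>\tfrac12$ is symmetric and gives $\{m_n,m_n+1\}$. To combine regimes, argue along subsequences: for each $k$ and each subsequence, either $n\pi_k$ is bounded, in which case Poisson applies, or it diverges, in which case the law of large numbers applies.
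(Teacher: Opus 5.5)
Your route is the paper's. In (i) you use declumping with $B=\mathbb Z_+\setminus\{1\}$, the root-event comparison of zero events (the paper's Lemma \ref{lem:declump_to_raw}), \ref{hyp:S} with $C_k=k$, $L_k=1$, and \ref{hyp:C} for $\widehat G_k$ from the fact that an inner witness has degree $\neq1$ and so cannot be internal to the outer chain. In (ii) you use raw counts, the skeleton/forest split for \ref{hyp:C}, and the threshold computation. Part (i) is fine, with two small repairs:
\begin{enumerate}
\item[--] The paper makes your \ref{hyp:C} sketch rigorous with the cut-and-graft Lemma \ref{lem:cut_graft}, which gives $A_n(\ell)\le\widehat\pi_k^2$ uniformly in $\ell$. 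Note that $u_{\ell+1}(\mathcal T)$ is always a descendant of the root, so your ``not a descendant'' case really means ``not below the witnessing child''.
\item[--] The Kesten-tree argument for the root event only controls fixed depths. Either add monotonicity in $k$, or use $\P(H_{1,k}\prec\mathcal T^n)=\mu_1^k\P(|\mathcal T|=n-k)/\P(|\mathcal T|=n)$.
\end{enumerate}

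One step in (ii) fails as stated. For the upper bound $\P(H_r(\mathcal T^n)\ge h_n)\to0$ you invoke Theorem \ref{thm:PoissonApproximation}(i) whenever $n\pi_{h_n}$ is bounded. Your own \ref{hyp:C} check, however, only covers $n\pi_{h_n}\ge n^{-o(1)}$. The thresholds you need are $h_n=m_n+1$ if $\theta_n\le\tfrac12$ and $h_n=m_n+2$ otherwise, and both have $n\pi_{h_n}$ polynomially small. In that regime \ref{hyp:C} can genuinely fail. The first child of the root alone gives $A_n(1)=\pi_k^2/\pi_{k-1}$, so \ref{hyp:C} forces $n\pi_{k-1}\to\infty$. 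For $k=m_n+1$ this is false along indices where $\theta_n\log n$ stays bounded, since then $n\pi_{m_n}$ stays bounded.

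The fix is what the paper does. Use only the first moment, $\E[N_{h_n}(\mathcal T^n)]\sim n\pi_{h_n}\to0$, which follows from \ref{hyp:S} via Corollary \ref{cor:SM}(ii), together with Markov's inequality. Reserve the law of large numbers, Theorem \ref{thm:PoissonApproximation}(ii), for levels with $n\pi_{h_n}\to\infty$, where your \ref{hyp:C} bound does hold. Only these two regimes occur, uniformly in $\theta_n$, so no subsequence argument is needed.

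Finally, for $\P(H_r(\mathcal T^n)=m_n)\to1$ you need $n\pi_{m_n}\to\infty$, which holds when $\theta_n$ is bounded away from $0$. The condition you wrote, $n\pi_{m_n-1}\to\infty$, holds for every $\theta_n$ and only yields $H_r\ge m_n-1$.
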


The proof illustrates the role of declumping in a particularly simple way. We
take
$$
G_k(T)=\mathbbm{1}_{\{H_{r,k}\prec T\}}.
$$
For $r=1$, raw unary occurrences form clusters, and the relevant declumped
indicator is
$$
\widehat{G}_{k}(T)
=
\mathbbm{1}_{\{k_{\varnothing}(T) \neq 1\}}
\mathbbm{1}_{\{\exists\, u \in \Gamma_{\varnothing}(T) \colon H_{1,k}\prec T_u\}}.
$$
For $r\ge2$, the raw complete $r$-ary count is already sufficiently controlled
in the divergent regime, and the concentration statement follows from the
law-of-large-numbers part of Theorem \ref{thm:PoissonApproximation}.

\paragraph{Application to the leaf-height.}

Our second application concerns the maximum leaf-height. Given a plane tree
$T$, the leaf-height $\lambda_v(T)$ of a vertex $v\in T$, sometimes also called its protection number, is the graph distance, in number of edges, from $v$ to its closest leaf descendant; in particular, a leaf has leaf-height $0$. The quantity
$$
\lambda(T)=\max_{v\in T}\lambda_v(T)
$$
is the leaf-height of $T$. See Figure \ref{fig:intro} for an illustration. The study of protection numbers in trees began with Cheon and Shapiro \cite{CS08}, and has been carried out for several models of random trees \cite{Man11,DP12,MW15,HJ15,GGLS23} including Bienaymé trees \cite{DJ14,Cop17,HP17,GGLS23,DGZ23,HSW24}. Our second application concerns limit theorems for the leaf-height of large size-conditioned Bienaymé trees, extending results obtained by Devroye--Goh--Zhao
\cite{DGZ23} and \cite{HSW24}.

\begin{theorem}
\label{thm:max_lh} Assume that $\mu$ is critical and belongs to the domain of attraction of a stable law.
\begin{enumerate}
\item[(i)]  Assume that $\mu_{1}>0$. There exists a constant $C_{\mu}>0$ such that the following holds. Set
$$k^*_{n} =  \frac{1}{\log(1/\mu_{1})}\log n +  \frac{1}{\log(1/\mu_{1})} \log\bigl((1-\mu_1)C_{\mu}\bigr).$$
Then for every $t \in \mathbb{R}$ we have
\[
\P\bigl(\lambda(\mathcal{T}^n) \leq \lfloor k^*_{n} + t\rfloor\bigr) - \exp\bigl(-\mu_1^{t+1 - \{k^*_{n} + t\}}\bigr) \to 0.
\]
\item[(ii)] Assume that $\mu_{1}=0$. Set $\kappa = \min\{i \geq 2 : \mu_i > 0\}$. There exists a constant $D_{\mu} \in (0,1)$ such that the following holds. Set
$$
k_{n}^{**} \coloneqq  \log_\kappa \log n - \log_\kappa \log(1/D_{\mu}).
$$
 Write $m_n=\lfloor k_n^{**}\rfloor$ and
$\theta_n=\{k_n^{**}\}$, and define
$$
  J_n=
  \begin{cases}
    \{m_n-1,m_n\},&\theta_n\le\tfrac12,\\
    \{m_n,m_n+1\},&\theta_n>\tfrac12.
  \end{cases}
$$
Then $\P(\lambda(\mathcal T^n)\in J_n)\longrightarrow1$. In addition, if $\liminf_n\min(\theta_n,1-\theta_n)>0$, then
$\P(\lambda(\mathcal T^n)=m_n)\to1$.
\end{enumerate}
\end{theorem}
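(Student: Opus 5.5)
We derive Theorem \ref{thm:max_lh} from Theorem \ref{thm:PoissonApproximation2} (part (i)) and Theorem \ref{thm:PoissonApproximation} (part (ii)), following the template of Theorem \ref{thm:max_string}. Take $G_k(T)=\mathbbm 1_{\{\lambda_\varnothing(T)\ge k\}}$. Then $\lambda(\mathcal T^n)\ge k$ exactly when some fringe subtree is marked. The whole argument therefore reduces to three inputs: the asymptotics of $p_k\coloneqq\P(\lambda_\varnothing(\mathcal T)\ge k)$, a verification of Assumption \ref{hyp:S}, and a verification of Assumption \ref{hyp:C} for the relevant indicators.

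\emph{Step 1: asymptotics of $p_k$.} Write $f(s)=\sum_j\mu_js^j$. The root has leaf-height $\ge k$ iff it is not a leaf and every child has leaf-height $\ge k-1$. Hence $p_k=f(p_{k-1})-\mu_0$, with $p_0=1$. Since $f(0)=\mu_0$, $f'(0)=\mu_1$ and $p_k\to0$, we obtain:
\begin{itemize}
\item if $\mu_1>0$, then $p_k\sim C'_\mu\mu_1^k$ for some constant $C'_\mu>0$, by standard Koenigs linearisation of the iteration near the attracting fixed point $0$;
\item if $\mu_1=0$, then $p_k=\mu_\kappa p_{k-1}^\kappa(1+O(p_{k-1}))$, and Böttcher-type iteration gives $p_k=\Theta(1)\,D_\mu^{\kappa^k}$ up to a multiplicative factor bounded above and below.
\end{itemize}
In the second case a bounded factor is harmless, since doubly exponential decay makes $np_k$ jump from $\infty$ to $0$ within one or two steps.

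\emph{Step 2, part (i).} Here leaf-height occurrences cluster: if $\lambda_v\ge k+j$ and $v$ has a unary ancestor chain, the ancestors are marked too. We declump with $B=\mathbb Z_+\setminus\{1\}$, so that $c_B=\E[k_\varnothing\mathbbm 1_{k_\varnothing\ne1}]=1-\mu_1$.

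Assumption \ref{hyp:S} for $G_k$ holds with $C_k$ the size of the depth-$k$ skeleton witnessing $\lambda_\varnothing\ge k$ and $L_k$ its number of open leaves at depth $k$. Under $\P_k$, each vertex above depth $k$ is forced to be non-leaf, so its offspring law is $\mu$ restricted to $\{1,2,\dots\}$ tilted by $p_{k-1}^j$. This tilting is strongly biased towards $j=1$, so $\E_k[C_k+L_k]=O(k)$ or $k^{O(1)}$, which is $n^{o(1)}$; Lemma \ref{lem:critM} then applies.

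The main obstacle is Assumption \ref{hyp:C} for $\widehat G_k$. We need to bound the probability that the root and its $(\ell+1)$-th DFS vertex are both boundary points of marked chains. If the second vertex lies in a different branch, the events essentially factorise, contributing $O(\pi_n^2)$. If it lies inside the witnessing skeleton of the first, then because it has outdegree $\ne1$ and a marked child of leaf-height $\ge k$, it must lie below depth $k$, or else it sits at a branching vertex of the skeleton. In either case an additional factor $p_k$, or a factor $(1-\mu_1)$-weighted branching cost, appears. We must show these nested contributions sum over $\ell\le M_n$ to $o(n\pi_n^2)$, which I expect to hold because $n\pi_n$ is bounded and each nesting costs at least a factor $\mu_1^{k}\asymp 1/n$ times polynomial skeleton sizes.

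\emph{Step 3: assembling part (i).} Theorem \ref{thm:PoissonApproximation2}(i) with $k=\lfloor k_n^*+t\rfloor+1$ gives
\[
\P(\lambda(\mathcal T^n)\le k-1)\approx\exp\bigl(-(1-\mu_1)np_k\bigr).
\]
Indeed, $\lambda\ge k$ iff the declumped count is positive, using the topmost vertex of a maximal unary chain and excluding the root-chain event, which has negligible probability. With $C_\mu=C'_\mu$ we get $(1-\mu_1)nC_\mu\mu_1^{k}=\mu_1^{k-k_n^*}=\mu_1^{t+1-\{k_n^*+t\}}$, which is the stated limit.

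\emph{Step 4, part (ii).} No clustering arises, since a marked vertex has $\ge\kappa\ge2$ children. We apply Theorem \ref{thm:PoissonApproximation}(ii) directly, with Assumption \ref{hyp:C} easy because nested occurrences cost an extra factor at least $p_k$. For $k=m_n-1$ (or $m_n$ when $\theta_n>1/2$) we have $np_k\to\infty$ by the doubly-exponential computation, so $\lambda\ge k$ whp. For $k=m_n+1$ (resp.\ $m_n+2$) we have $np_k\to0$, and a first-moment bound $\E[\#]\sim np_k$, the first half of Assumption \ref{hyp:M}, gives $\lambda<k$ whp. The case split at $\theta_n=1/2$ comes from comparing $\kappa^{-\theta_n}$ and $\kappa^{1-\theta_n}$ against the error factor $\Theta(1)$. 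When $\theta_n$ is bounded away from $0$ and $1$, both $m_n-1$ and $m_n+1$ are excluded, giving $\lambda=m_n$.
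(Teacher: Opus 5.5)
\textbf{Part (i): the non-clustering step is missing.} Part (i) follows the paper's architecture. You declump at $B=\mathbb Z_+\setminus\{1\}$, which gives the same indicator as $\{2,3,\dots\}$ because a leaf has no child. You check Assumption \ref{hyp:S} through the tilted skeleton of the first $k$ generations, apply the Poisson approximation to the declumped count, and compare with the raw count through a minimal-depth marked vertex.

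The gap is the step you yourself call the main obstacle, Assumption \ref{hyp:C} for $\widehat G_k$, which you only conjecture; it is the real content of this part. Since $n\widehat\pi_{k_n}$ is of order one, you need the following: given a declumped occurrence at the root, the expected number of further declumped occurrences among the first $M_n$ DFS positions is $o(1)$. Siblings and the grafted forest contribute $O(M_n\widehat\pi_{k_n})=o(1)$, so everything hinges on skeleton vertices.

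There the mechanism in your sentence does not deliver the bound. Suppose a branching skeleton vertex only paid a constant-order ``branching cost''. Under the tilted law, a skeleton vertex with remaining protection $m$ branches with probability of order $\pi_{m-1}$, and $\sum_m\pi_{m-1}$ is of order one. You would therefore get $\Theta(1)$ nested occurrences and \ref{hyp:C} would fail.

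Your final claim, a factor of order $\mu_1^{k}$ per nesting, is correct. It comes from multiplying that branching probability by the cost $\pi_k/\pi_{m-1}$ of upgrading one child from leaf-height $\ge m-1$ to $\ge k$. Equivalently, it is the uniform estimate
\[
\P\bigl(\widehat G_k(\mathcal T)=1,\ \lambda_\varnothing(\mathcal T)\ge m\bigr)\le C\,\pi_k\,\pi_m,\qquad 0\le m\le k,
\]
which is Lemma \ref{lem:lh_conditional_mark}. The degree condition forces at least $\kappa$ further children, each with leaf-height $\ge m-1$, and $\pi_m\ge\mu_\kappa\pi_{m-1}^{\kappa}$ absorbs them.

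With this estimate your expected-count argument closes. There are $\E_k[C_k]=O(k)$ skeleton vertices, each a declumped occurrence with conditional probability $O(\pi_k)$, so $\sum_{\ell\le M_n}A_n(\ell)=O(\widehat\pi_{k_n}^2(k_n+M_n))=o(n\widehat\pi_{k_n}^2)$. This is a legitimate alternative to the paper's cut-and-graft Lemma \ref{lem:cut_graft}, which gives the stronger uniform bound $A_n(\ell)\le C\widehat\pi_{k_n}^2$.

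Step 3 also asserts without proof that $\P(\lambda_\varnothing(\mathcal T^n)\ge k_n)\to0$. This is Corollary \ref{cor:SM}(iii), using \ref{hyp:S} and $\pi_{k_n}=O(1/n)$.

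\textbf{Part (ii): a different route that works.} For the lower bound you use the raw count and the law of large numbers of Theorem \ref{thm:PoissonApproximation}(ii). The paper instead uses the declumped count with Theorem \ref{thm:PoissonApproximation2}(ii), and must treat $\mu$ supported on $\{0,\kappa\}$ (where $c_B=0$) separately via Theorem \ref{thm:max_string}. Your version needs no such case split, and you are right that $\pi_k=\Theta(D_\mu^{\kappa^k})$ suffices.

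Your justification of \ref{hyp:C} is inaccurate, though. Raw occurrences do nest: a vertex with $\lambda_v\ge k+1$ has at least $\kappa$ children of leaf-height $\ge k$. Moreover, a nested occurrence at skeleton depth $h$ costs $\pi_k/\pi_{k-h}$, which for $h=1$ is of order $\pi_k^{1-1/\kappa}$, not $\pi_k$.

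What actually makes \ref{hyp:C} easy is the divergent regime. You only need $o(n\pi_{h_n})$ further occurrences per occurrence. The skeleton has expected size $O(\kappa^{h_n})=O(\log n)$ and the grafted forest contributes $O(M_n\pi_{h_n})$, whereas $n\pi_{h_n}\ge n^{\delta}$ for some $\delta>0$ at the levels $m_n-1$ or $m_n$ that you use.

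Finally, you must also check \ref{hyp:S} for $\kappa\ge2$, since your Step 2 only does $\kappa=1$. There the tilted law concentrates on $\kappa$, which gives $\E_k[C_k+L_k]=O(\kappa^k)$.
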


The constants $C_{\mu}$ and $D_{\mu}$ come from the asymptotic behavior of the
root leaf-height in the unconditioned Bienaym\'e tree (see Lemma
\ref{lem:ell_asymp}):
$$C_\mu
=
\lim_{k\to\infty}\frac{\P(\lambda_\varnothing( \mathcal{T}) \geq k)}{\mu_1^k} \textrm{ when } \mu_1>0, \qquad
D_\mu
=
\lim_{k\to\infty}\P(\lambda_\varnothing( \mathcal{T}) \geq k)^{1/\kappa^k}  \textrm{ when } \mu_1=0,
$$
where $\varnothing$ is the root of $\mathcal{T}$.

In particular, Theorem \ref{thm:max_lh} implies that
$$
\textrm{when } \mu_{1}>0, \quad  \frac{\lambda(\mathcal T^n)}{\log n}
 \quad \mathop{\longrightarrow}^{(\P)}_{n \rightarrow \infty} \quad 
\frac{1}{\log(1/\mu_1)}; \qquad 
\textrm{when } \mu_{1}=0,   \quad \frac{\lambda(\mathcal T^n)}{\log_\kappa\log n}
 \quad \mathop{\longrightarrow}^{(\P)}_{n \rightarrow \infty} \quad 1.
$$
The first-order limits were obtained by Devroye--Goh--Zhao \cite{DGZ23} under
finite offspring variance. Heuberger--Selkirk--Wagner \cite{HSW24} obtained the
refined lattice behavior under a finite-exponential-moment assumption. Theorem
\ref{thm:max_lh} removes the latter assumption and covers stable offspring laws
with infinite variance.

For the proof  we take
$G_k(T)=\mathbbm{1}_{\{\lambda_\varnothing(T)\ge k\}}$.
Recalling that $\kappa=\min\{j\ge1:\mu_j>0\}$, when $1-\kappa\mu_\kappa>0$, the corresponding declumped indicator is
$$\widehat G_k(T)
=
\mathbbm{1}_{\{k_\varnothing(T)\ge\kappa+1\}}
\mathbbm{1}_{\{\exists u\in\Gamma_\varnothing(T):G_k(T_u)=1\}}.
$$
The declumped count gives lower-bound witnesses for large leaf-height, while
the raw count gives the matching first-moment upper bound.

\paragraph{Strategy of the proofs.}
Theorem \ref{thm:PoissonApproximation} is obtained by combining the
cyclic-shift representation of conditioned Bienaym\'e trees with the
Chen--Stein method. The cyclic-shift representation allows us to replace a
uniformly chosen fringe subtree of $\mathcal T^n$ by the first tree encoded by
a random walk bridge conditioned on $S_n=-1$. Thus the relevant count can be
studied as a sum of local indicators along a cyclically exchangeable sequence
of increments.

The proof then proceeds by truncating the indicators at the cutoff scale
$M_n$. Assumption \ref{hyp:M} ensures that this truncation does not change the
count asymptotically: marked fringe subtrees with size larger than $M_n$ have
negligible contribution. After truncation, each indicator only depends on a
block of $M_n=o(n)$ consecutive increments. This makes it possible to apply the
Chen--Stein method with dependency neighborhoods of size of order $M_n$.

There are three error terms to control. The first one is a first-order
local-dependence term and is small because $M_n=o(n)$ in the bounded-mean
regime. The second one measures the probability of two nearby marked
occurrences and is precisely where Assumption \ref{hyp:C} is used. The third
term is the most delicate one. It measures the interaction between a local
indicator and the rest of the configuration outside its dependency
neighborhood under the bridge conditioning. Rather than dominate this term by a mixing coefficient, we retain the
conditional test-function term in the Arratia--Goldstein--Gordon bound and
estimate it directly under the bridge conditioning. Uniform local limit estimates show that deleting a microscopic block only
creates an asymptotically negligible perturbation of the bridge endpoint.
This is the bridge-removal step. This
yields the Poisson approximation when $(n\pi_n)$ is bounded.

When $n\pi_n\to\infty$, we no longer need a Poisson approximation. Instead, we
prove a law of large numbers for the marked count. The expectation is
asymptotic to $n\pi_n$ by Assumption \ref{hyp:M}, while the variance is
controlled by splitting pairs of vertices into local and separated pairs. The
local contribution is controlled by Assumption \ref{hyp:C}, and separated
pairs are asymptotically independent after the same bridge-removal estimates.
This gives convergence of the count divided by $n\pi_n$ to $1$ in probability.

Theorem \ref{thm:PoissonApproximation2} is a consequence of Theorem
\ref{thm:PoissonApproximation}. The root-declumping estimate gives
$\P(\widehat G_{k_n}(\mathcal T)=1)\sim c_B\P(G_{k_n}(\mathcal T)=1)$ in the
bounded regime.  Assumption \ref{hyp:S} for $(G_k)$, together with Assumption \ref{hyp:C}
for $(\widehat G_k)$ with respect to the same cutoff sequence, implies the
microscopicity and non-clustering conditions needed to apply the general
Poisson approximation theorem to the declumped indicators.

The declumped count is not, in general, deterministically equivalent to the
raw non-fringe count. Instead, it provides a lower-bound witness for the
existence of a cluster, while the corresponding raw count provides first-moment
upper bounds. This combination is sufficient for the extremal applications
below: the complete $r$-ary subtree height and the maximum leaf-height.

\paragraph{Relation with earlier Chen--Stein arguments.} Applications of the Chen--Stein method to fringe-subtree counts in conditioned
Bienaym\'e trees are relatively recent. The preliminary arXiv version of
Cai--Devroye \cite{CD16} uses a fringe-subtree switching coupling together
with Stein's method for exchangeable pairs; this argument was replaced in the
published version \cite{CD17} by a conditional-binomial approach. More
recently, Berzunza--Holmgren--Janson \cite{BHJ26} obtained quantitative
Poisson approximations for prescribed fringe trees in random trees with a
given degree sequence using a conditional, Palm-type coupling, and also
revisited the Cai--Devroye exchangeable-pair construction.

\paragraph{Further directions.} Natural extensions include central limit theorems in the regime
$n\pi_n\to\infty$ and analogous results for distance-to-boundary statistics
recently considered in \cite{MS26}. See also
Section \ref{ssec:lhperspectives} for possible extensions to counts of vertices
with prescribed leaf-height. It would be interesting to investigate whether the declumping mechanism developed here extends to clustered rare events in other models.

\tableofcontents

%==================================================================
\section{Bienaymé trees, coding by walks and fringe subtree counts}\label{sec:BGW}
%==================================================================

\subsection{Plane trees and Bienaym\'e trees}
\label{sec:bgw_trees}

We consider plane trees, also known as rooted ordered trees (see Le~Gall~\cite[Sec.~1]{LG05} for background on plane trees and Bienaymé trees). For every plane tree~$T$ and every vertex $v \in T$, we denote by $k_v(T)$ the outdegree of~$v$, also called the number of children, and denote by $\Gamma_{u}(T)$ the children of $u$. Recall from the Introduction the definition of a fringe subtree and a non-fringe subtree. If $T'$ can be obtained from $T_\varnothing$ by replacing some (possibly none) of the fringe subtrees of $T_\varnothing$ with leaves (or, equivalently, if $T$ can be obtained from $T'$ by grafting trees onto the leaves of $T'$), we write $T' \prec T$. If $u_{1}, \ldots, u_{n}$ are the vertices of a finite plane tree $T$ with $n$ vertices listed in depth-first-search (DFS) order (sometimes also called lexicographical order), we define  $\mathbf{x}(T)=(k_{u_{1}}(T)-1, \ldots,  k_{u_{n}}(T)-1)$ and say that $\mathbf{x}(T)$ is the DFS sequence of $T$. 

Given a sequence $\mathbf{x}=(x_{i})_{i \geq 1}$ of integers, we set
$$ \zeta(\mathbf{x})= \inf \{k \geq 1 : x_{1}+\cdots+x_{k}=-1\} \in \N \cup \{+\infty\}$$
with the convention $\inf \varnothing=+\infty$.
If $\mathbf{x}$ is a finite sequence, $\zeta(\mathbf{x})$ is defined in the same way.

It is a simple matter to see that for every tree $T$ with $n$ vertices  $u_{1}, \ldots, u_{n}$ listed in DFS order, for every $1 \leq i \leq n$, if $ \mathbf{x}^{(i)}(T)=(k_{u_{i}}(T)-1, k_{u_{i+1}}(T)-1, \ldots,k_{u_{n}}(T)-1,k_{u_{1}}(T)-1, \ldots,k_{u_{i-1}}(T)-1)$ is the $i$-th cyclic shift of $\mathbf{x}(T)$, then 
\begin{equation}
\label{eq:fringe}
\left(\mathbf{x}^{(i)}_{1}(T), \ldots,\mathbf{x}^{(i)}_{\zeta(\mathbf{x}^{(i)}(T))}(T)\right)=\mathbf{x}(T_{u_{i}}).
\end{equation}
In words, $\mathbf{x}^{(i)}(T)$ up to time $\zeta(\mathbf{x}^{(i)}(T))$  is precisely the DFS sequence of the fringe subtree of $u_{i}$ in $T$.

Given a probability distribution $\mu = (\mu_n)_{n \geq 0}$ on~$\mathbb{Z}_{\geq 0}$, we denote by $\P_{\mu}$ the law of a Bienaym\'e tree with offspring distribution~$\mu$. For every finite plane tree~$T$, we have
\[
\P_\mu(T) = \prod_{u \in T} \mu_{k_u(T)}.
\]
We implicitly always assume that $\mu_1 \neq 1$ to avoid degenerate cases. As mentioned in the Introduction, we denote by $ \mathcal{T}$ a $\mu$-Bienaymé tree and by $ \T^{n}$ a $\mu$-Bienaymé tree conditioned on having $n$ vertices (we implicitly restrict to those values of $n$ for which this conditioning makes sense). We call such values of $n$ admissible, meaning $\P(|\mathcal T|=n)>0$. We say that $\mu$ is aperiodic if its support has span one, equivalently $\gcd\{i-j:\mu_i\mu_j>0\}=1$.
 
 We will always consider offspring distributions satisfying the following assumption for some $\alpha \in (1,2]$:
\begin{equation*}
  \mu \textrm{ is critical and in the domain of attraction of an }
  \alpha\textrm{-stable law}.
  \tag{$H_\alpha$}
  \label{eq:Halpha}
\end{equation*}
As mentioned in the Introduction, the second property is equivalent to the fact that there exists a slowly varying function $\ell$ such that $ \sum_{j=0}^{n} j^{2} \mu_j =\ell(n) n ^{2-\alpha}$ for every $n \geq 1$.

\subsection{Random walks and the \L ukasiewicz path}\label{sec:rw}
\label{ssec:RW}

A central tool for studying Bienaym\'e trees is their coding by random walks (see \cite[Sec.~6.1]{Pit06} for background and proofs for the results mentioned here).

Let $\mathbf{X}=(X_i)_{i \geq 1}$ be a sequence of i.i.d. random variables with law given by $\P(X_{1}=i)=\mu_{i+1}$ for $i \geq -1$. Set
$$S_0 = 0, \qquad S_k = \sum_{i=1}^k X_i, \qquad k \geq 1.$$

It is  well known  that $\mathbf{x}( \mathcal{T}^{n}) $, the DFS sequence of $\T^{n}$, has the same law as $(X_{1}, \ldots,X_{n}) $ under $ \P(\cdot \mid \zeta(\mathbf{X})=n) $ and that 
 if $I$ is a uniform random variable on $ \{1,2, \ldots,n\}$ independent of $ \mathcal{T}^{n}$, 
\begin{equation}
\label{eq:shifts}
\mathbf{x}^{(I)} ( \mathcal{T}^{n}) \quad  \mathop{=}^{(d)} \quad  (X_{1}, \ldots,X_{n}) \textrm{ under } \P(\cdot \mid S_{n}=-1).
\end{equation}
where we recall that $\mathbf{x}^{(I)} ( \mathcal{T}^{n})$ is the $I$-th cyclic shift of $\mathbf{x}( \mathcal{T}^{n}) $.

\subsection{Subtree counts using cyclic shifts}
\label{sec:indicators}

Here we explain how we can gain access to the leaf-height and the maximal complete $r$-ary tree that appears as a non-fringe subtree using cyclic shifts.

First, if $\mathbf{x}$ is a (finite or infinite) sequence of elements of  $\mathbb{Z}_{\geq -1}$ such that $\zeta(\mathbf{x})<\infty$, we let $T(\mathbf{x})$ be the tree whose DFS sequence is $(\mathbf{x}_{1}, \ldots,\mathbf{x}_{\zeta(\mathbf{x})})$.

\begin{definition}[Indicators]
\label{def:indicators}
Let $G$ be a $ \{0,1\}$-valued function defined on the set of all finite plane trees. 
For every  sequence $\mathbf{x}=(x_1, \ldots, x_n) \in \mathbb{Z}_{\geq -1} ^{n}$ and for every $M \in \N \cup \{\infty\}$ we define $I_1^{(M)}$ as follows:
\begin{enumerate}
\item[--] if $\zeta(\mathbf{x})=\infty$ or $\zeta(\mathbf{x})>M$, set $I_1^{(M)}(\mathbf{x}) = 0$;
\item[--] if  $\zeta(\mathbf{x}) \leq M$ set $I_1^{(M)}(\mathbf{x})=G(T(\mathbf{x}_{1}, \ldots,\mathbf{x}_{\zeta(\mathbf{x})}))$.
\end{enumerate}
Then, for every $1 \leq i \leq  n$, define $I_i^{(M)}(\mathbf{x}) =I^{(M)}_{1}(x_i, x_{i+1}, \ldots, x_n, x_1, \ldots, x_{i-1})$.
\end{definition}
 Observe that the dependence of $I_i^{(M)}(\mathbf{x})$ on $G$ is implicit. Also observe that given $G$, the value of $I_1^{(M)}(\mathbf{x})$ only depends on the values of $x_{1},  \ldots,x_{M}$. For this reason, we shall often write $I_1^{(M)}(x_{1}, \ldots,x_{M})$.

In words, $I_{1}^{(M)}(\mathbf{x})$ tests whether the first entries of the DFS sequence $\mathbf{x}$ encode a tree of size at most $M$ whose $G$-value is $1$, and $I_{i}^{(M)}(\mathbf{x})$ is defined in the same way but after $i$ cyclic shifts. The parameter $M$ serves as a cutoff threshold on the size of the tree. Also observe that in the particular case $G(T)=\mathbbm{1}_{T=t}$ (for a fixed tree $t$) this gives fringe tree counts, but the function $G$ can be quite general.

If $T$ is a tree, observe that
$$\sum_{i=1}^{n} I^{(M)}_{i}(\mathbf{x}(T))$$
counts the number of fringe subtrees of $T$ of size at most $M$ whose $G$-value is $1$.  When $M<n$, we call this a truncated count. The following result immediately follows from \eqref{eq:shifts}.

\begin{lemma}
\label{lem:RW}
We have
$$\sum_{i=1}^{n} I^{(M)}_{i}(\mathbf{x}(\T^{n})) \quad \mathop{=}^{(d)} \qquad \sum_{i=1}^{n} I^{(M)}_{i}(X_{1}, \ldots,X_{n}) \quad \textrm{ under } \quad  \P(\cdot \mid S_{n}=-1).$$
\end{lemma}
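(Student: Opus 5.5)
The identity follows by applying a single deterministic function to both sides of the cyclic-shift identity \eqref{eq:shifts}. Fix $n$ and $M$, and set
$$\Phi(\mathbf{x})=\sum_{i=1}^{n} I^{(M)}_{i}(\mathbf{x}), \qquad \mathbf{x}\in\mathbb{Z}_{\geq -1}^{n}.$$
This is a deterministic function of $\mathbf{x}$. The key observation is that $\Phi$ is invariant under cyclic shifts. Indeed, let $\sigma^{j}\mathbf{x}=(x_{j},\ldots,x_n,x_1,\ldots,x_{j-1})$. By Definition \ref{def:indicators}, $I^{(M)}_i(\sigma^{j}\mathbf{x})=I^{(M)}_1(\sigma^{i+j-1}\mathbf{x})$, with indices taken modulo $n$, and this equals $I^{(M)}_{i+j-1}(\mathbf{x})$. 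As $i$ runs over $\{1,\ldots,n\}$, the index $i+j-1 \bmod n$ also runs over all of $\{1,\ldots,n\}$. Hence $\Phi(\sigma^{j}\mathbf{x})=\Phi(\mathbf{x})$ for every $j$.

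It follows that for every tree $T$ with $n$ vertices and every $1\le j\le n$ we have $\Phi(\mathbf{x}^{(j)}(T))=\Phi(\mathbf{x}(T))$. In particular, with $I$ uniform on $\{1,\ldots,n\}$ and independent of $\mathcal{T}^n$, the identity $\Phi(\mathbf{x}^{(I)}(\mathcal{T}^n))=\Phi(\mathbf{x}(\mathcal{T}^n))$ holds almost surely.

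Applying the measurable map $\Phi$ to both sides of \eqref{eq:shifts} gives
$$\Phi(\mathbf{x}^{(I)}(\mathcal{T}^n)) \mathop{=}^{(d)} \Phi(X_1,\ldots,X_n) \quad \text{under } \P(\cdot\mid S_n=-1).$$
Combining this with the almost sure identity above yields the statement.

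No step here is a genuine obstacle. The only point needing care is the modular index bookkeeping in the cyclic invariance of $\Phi$. Definition \ref{def:indicators} already builds each $I^{(M)}_i$ through a cyclic shift, so the invariance is immediate, and the distributional content comes entirely from \eqref{eq:shifts}.
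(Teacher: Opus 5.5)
Your proof is correct and is essentially the paper's argument, which simply states that the lemma follows immediately from \eqref{eq:shifts}. You make explicit the cyclic-shift invariance of $\sum_{i=1}^n I^{(M)}_i$ that the paper leaves implicit, and your index bookkeeping $I^{(M)}_i(\sigma^j\mathbf{x})=I^{(M)}_{i+j-1}(\mathbf{x})$ is right.
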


\begin{figure}[h]
    \centering
    \includegraphics[width=0.6\linewidth]{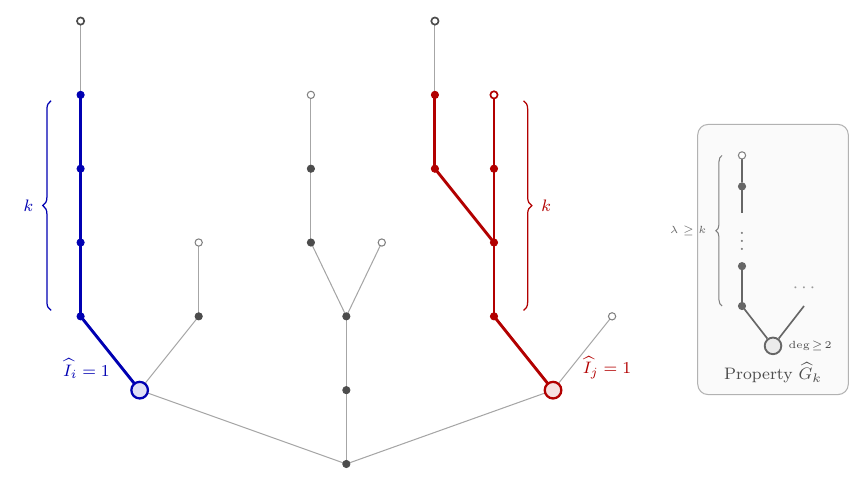}
\caption{Illustration of a declumped leaf-height indicator. The marked vertices satisfy the condition $k_v(T)\ge2$ and have a child whose fringe subtree has root leaf-height at least $k$. Such indicators select boundary witnesses inside clusters of raw leaf-height occurrences.} 
\label{fig:indicator_illustration}
\end{figure}

The following consequence will be useful.

\begin{corollary}
\label{cor:cyclicshift}
Let $G$ be a $ \{0,1\}$-valued function defined on the set of all finite plane trees.  Let $U_n$ be a uniform vertex of $\mathcal T^n$, independent of
$\mathcal T^n$. Set also
 $\Psi_m(i)=\P(S_m=i)$, with the convention $\Psi_m(i)=0$ if $m<0$. Then
$$
\P(G(\mathcal T^n_{U_n})=1)
=
\frac{1}{\Psi_n(-1)}
\E\left[
\mathbbm{1}_{\{G(\mathcal T)=1\}}
\Psi_{n-|\mathcal T|}(0)
\right].
$$
\end{corollary}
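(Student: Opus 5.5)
We prove Corollary \ref{cor:cyclicshift} by combining the cyclic-shift identity \eqref{eq:shifts} with the fringe-subtree identity \eqref{eq:fringe}, followed by a Markov-property decomposition of the walk at the time $\zeta$. Choosing $U_n=u_I$ with $I$ uniform on $\{1,\ldots,n\}$ and independent of $\mathcal T^n$, \eqref{eq:fringe} shows that $\mathcal T^n_{U_n}=T(\mathbf{x}^{(I)}(\mathcal T^n))$, the tree coded by the cyclically shifted sequence up to its first hitting time of $-1$. The shifted sequence always reaches $-1$ by time $n$, because the fringe subtree has at most $n$ vertices. By \eqref{eq:shifts},
$$
\P\bigl(G(\mathcal T^n_{U_n})=1\bigr)=\P\bigl(\zeta(X_1,\ldots,X_n)\le n,\ G(T(X_1,\ldots,X_n))=1\bigm| S_n=-1\bigr).
$$
Under the bridge conditioning, the event $\zeta\le n$ is automatic.

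Next we decompose on the value of $\zeta$ and the tree it codes, multiplying by $\Psi_n(-1)=\P(S_n=-1)$. We compute $\P(\zeta=m,\ T(X_1,\ldots,X_m)=t,\ S_n=-1)$ for a fixed plane tree $t$ with $|t|=m\le n$. The event $\{\zeta=m, T(X_1,\ldots,X_m)=t\}$ is exactly $\{(X_1,\ldots,X_m)=\mathbf{x}(t)\}$. Its probability is $\prod_{u\in t}\mu_{k_u(t)}=\P(\mathcal T=t)$, and on this event $S_m=-1$. By independence of increments, the remaining requirement $S_n-S_m=0$ has probability $\Psi_{n-m}(0)$, where $\Psi_0(0)=1$ covers the case $m=n$. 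Summing over all $t$ with $G(t)=1$ and $|t|\le n$ gives
$$
\Psi_n(-1)\,\P\bigl(G(\mathcal T^n_{U_n})=1\bigr)=\sum_{t:\,G(t)=1,\,|t|\le n}\P(\mathcal T=t)\,\Psi_{n-|t|}(0)=\E\bigl[\mathbbm 1_{\{G(\mathcal T)=1\}}\Psi_{n-|\mathcal T|}(0)\bigr].
$$
Trees with $|t|>n$, as well as infinite trees, contribute zero by the convention $\Psi_m=0$ for $m<0$. On $\{|\mathcal T|=\infty\}$, which is null by criticality, the integrand is interpreted as zero. Dividing by $\Psi_n(-1)$, which is positive for admissible $n$, gives the claim.

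Nothing here is a genuine obstacle. The only point needing care is the first step: the identification $\mathcal T^n_{u_I}=T(\mathbf{x}^{(I)}(\mathcal T^n))$ must hold jointly with the distributional identity \eqref{eq:shifts}. This is fine because both sides are the same deterministic function of the shifted sequence, so equality in law of the sequences transfers directly to the events $\{G=1\}$.
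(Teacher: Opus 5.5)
Your proof is correct and follows the same route as the paper. You use the cyclic-shift identity \eqref{eq:shifts} to replace $\mathcal T^n_{U_n}$ by the first tree coded by the bridge, split at the hitting time $\zeta$, and get the factor $\Psi_{n-|\mathcal T|}(0)$ from the increments after time $\zeta$. The only difference is cosmetic: you decompose over individual trees $t$ via $\{(X_1,\ldots,X_m)=\mathbf{x}(t)\}$, whereas the paper decomposes over $m=|\mathcal T|$ and invokes the Markov property at $\zeta$; these are the same computation.
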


\begin{proof}
By the cyclic-shift representation \eqref{eq:shifts}, the DFS sequence of
$\mathcal T^n_{U_n}$ has the same law as the initial tree encoded by
$(X_1,\ldots,X_n)$ under $\P(\cdot\mid S_n=-1)$. Hence
$$
\P(G(\mathcal T^n_{U_n})=1)
=
\P(I_1^{(n)}(X_1,\ldots,X_n)=1\mid S_n=-1),
$$
where the indicator is defined with the function $G$. Therefore
$$
\P(G(\mathcal T^n_{U_n})=1)
=
\frac{
\P(I_1^{(n)}(X_1,\ldots,X_n)=1,\ S_n=-1)
}{\Psi_n(-1)}.
$$
On the event $\{\zeta=m\}$, the first $m$ increments encode a Bienaym\'e tree
$\mathcal T$ with $|\mathcal T|=m$, and after time $m$ the walk starts from
$-1$. By the Markov property at time $\zeta$,
$$
\P(I_1^{(n)}=1,\ S_n=-1)
=
\sum_{m\ge1}
\P(G(\mathcal T)=1,\ |\mathcal T|=m)\Psi_{n-m}(0).
$$
With the convention $\Psi_{n-m}(0)=0$ when $m>n$, the last sum is exactly$
\E\left[
\mathbbm{1}_{\{G(\mathcal T)=1\}}
\Psi_{n-|\mathcal T|}(0)
\right]$.
Dividing by $\Psi_n(-1)$ gives the result.
\end{proof}

For the proof of Theorems \ref{thm:max_lh} and \ref{thm:max_string}, we shall apply the preceding cyclic-shift representation to functions $G_k$ which encode the existence of large non-fringe subtrees inside the corresponding fringe subtree. More precisely, raw indicators naturally describe the events $\lambda(T)\ge k$ and $H_r(T)\ge k$: if $G_k(T)=\mathbbm{1}_{\{\lambda_\varnothing(T)\ge k\}}$, then $\lambda(T)\ge k$ if and only if $\sum_{v\in T}G_k(T_v)\ge1$; similarly, if $G_k(T)=\mathbbm{1}_{\{H_{r,k}\prec T\}}$, then $H_r(T)\ge k$ if and only if $\sum_{v\in T}G_k(T_v)\ge1$.

However, these raw non-fringe occurrences may form clusters, and a direct Poisson approximation for their counts is in general false. We therefore introduce declumped indicators, which count suitable boundary points of such clusters (see Figure \ref{fig:indicator_illustration} for an illustration of  declumping). For the leaf-height problem we shall use indicators of the form
$\mathbbm{1}_{\{k_\varnothing(T)\ge\kappa+1,\ \exists u\in\Gamma_\varnothing(T):\lambda_\varnothing(T_u)\ge k\}}$, where $\kappa=\min\{j\ge1:\mu_j>0\}$, and for the complete $r$-ary problem we shall use indicators of the form
$\mathbbm{1}_{\{k_\varnothing(T)\neq r,\ \exists u\in\Gamma_\varnothing(T):H_{r,k}\prec T_u\}}$.

These declumped indicators do not, in general, give a deterministic equivalence with the maximum events. Rather, they provide Poisson lower-bound witnesses, while the corresponding raw counts provide first-moment upper bounds. Combining these two estimates yields the asymptotics of $\lambda(\mathcal T^n)$ and $H_r(\mathcal T^n)$.

\subsection{Random walk toolbox}
\label{sec:toolbox}

All the rest of the paper relies on the use of conditioned random walks. In this short part, we gather notation and results that will be extensively used in the sequel.

We always assume that $\mu$ is critical and belongs to the domain of attraction of an $\alpha$-stable law with $\alpha \in (1,2]$ and that $\mathbf{X}=(X_i)_{i \geq 1}$ is a sequence of i.i.d. random variables with law given by $\P(X_{1}=i)=\mu_{i+1}$ for $i \geq -1$. As before,
$$S_0 = 0, \qquad S_k = \sum_{i=1}^k X_i, \qquad k \geq 1.$$
We additionally assume that $\mu$ is aperiodic for simplicity (all the results can be extended to the periodic case by working along suitable subsequences).

There exists an increasing sequence $(a_{n})$ such that $S_{n}/a_{n}$ converges in distribution to a random variable $Y_{\alpha}$ with Laplace exponent given by $\Es{ \exp(- \lambda Y_{\alpha})}= \exp(\lambda ^ \alpha)$ for every $\lambda>0$ (see e.g.~\cite[Sec.~XVII.5]{Fel71}). In addition, the convergence
\begin{equation}
\label{eq:cvlaw}  \left(\frac{S_{\lfloor  n t \rfloor}}{a_{n}} : 0 \leq t \leq  1 \right) \quad \mathop{\longrightarrow}^{(d)}_{n \rightarrow \infty} \quad (Y_{\alpha}(t): 0 \leq t \leq 1)
\end{equation}
holds in distribution for the Skorokhod $J_{1}$ topology on $[0,1]$, where $ (Y_{\alpha}(t): 0 \leq t \leq 1)$ is the spectrally positive $\alpha$-stable Lévy process with $Y_{\alpha}(1)$ having the same law as $Y_{\alpha}$. In addition, $(a_{n})$ is regularly varying of index $1/\alpha$, meaning that $a_{n}/n^{1/\alpha}$ is slowly varying. By definition, a slowly varying function $\ell : \R_{+} \rightarrow\R$ satisfies $\ell(ax)/\ell(x) \rightarrow1$ as $x \rightarrow\infty$ for every $a>0$ (see \cite{bingham1989regular} for background on slowly varying functions and sequences). We will often use the following result for slowly varying functions (often called Potter bounds, see \cite[Theorem 1.5.6]{bingham1989regular}). Let $\ell(x)$ be a slowly varying function at infinity. For every $A>1$, $\delta>0$, there exists $B>0$ such that for every $x,y\geq B$:
\begin{equation}\label{eq: potter_bound}
    \frac{\ell(y)}{\ell(x)}\leq A \max \big( (y/x)^{\delta}, (y/x)^{-\delta}\big)
\end{equation}

\paragraph{Periodicity.}
Let
\[
d=\gcd\{i-j:\mu_i\mu_j>0\}
\]
be the span of the support of $\mu$.  When $d=1$, $\mu$ is called aperiodic, and in this case $\mathbb P(|\mathcal T|=n)>0$ for every $n$ sufficiently large and the local limit theorem  \cite[Theorem 4.2.1]{IL71} states that
\begin{equation}
\label{eq:LL}\sup_{k \in \Z} \left| a_{n} \Pr{S_{n}=k}- d_{\alpha} \left(  \frac{k}{a_{n}} \right)  \right|  \quad \mathop{\longrightarrow}_{n \rightarrow \infty} \quad 0
\end{equation} 
where $d_{\alpha}$ is the density of $Y_{\alpha}$. When $\mu$ has finite variance, $a_{n} \sim c \sqrt{n}$ for some constant $c>0$.

For notational simplicity, the proofs below are written in the aperiodic case. In the periodic case, the only modification in the proofs would be to replace the aperiodic local limit theorem
\eqref{eq:LL}  by its lattice version:
\[
\sup_{k\in n b+d\mathbb Z}
\left|
a_n\mathbb P(S_n=k)-d\,d_\alpha(k/a_n)
\right|
 \quad \mathop{\longrightarrow}_{n \rightarrow \infty} \quad 0,
\]
where $b$ is any element of the support of $X_1$ modulo $d$; outside the
lattice $n b+d\mathbb Z$ the probability is zero. Since all bridge
probabilities appearing below are evaluated only at compatible lattice points,
the extra factor $d$ cancels in all ratios such as ${\Psi_{n-m}(0)}/{\Psi_n(-1)}$.

Consequently the estimates and conclusions proved below in the aperiodic case
remain valid in the periodic case along admissible values of $n$.

\paragraph{Useful notation.} For every $n \geq 1$,  let $\zeta=\zeta((X_{i})_{i \geq 1})$ be  the first hitting time of $-1$ by $(S_{k})_{k \geq 1}$. We consider a sequence $(G_k)$  of $\{0,1\}$-valued functions on finite plane trees and let $(k_{n})$ be a sequence of integers. We set 
$$\pi_n=\P(G_{k_n}(\mathcal T)=1).$$

\section{Direct applications and consequences of Assumption \ref{hyp:S}}
\label{sec:applications}
 
In this section we use Theorem \ref{thm:PoissonApproximation} as a black box
and record several direct applications. We first treat genuine fringe subtree
counts. In this case the marking fixes the size of the marked subtree, so the
local clustering assumptions are automatic and Assumption \ref{hyp:M} can be
checked directly.

We then turn to applications where Assumption \ref{hyp:M} is verified through
the structural Assumption \ref{hyp:S}. We first prove that \ref{hyp:S}
implies the required microscopicity estimates, and record a few consequences
that will be used later. This allows us to handle raw non-fringe subtree
counts in a low-clumping regime, as well as counts of vertices with unusually
large prescribed outdegree.

As previously mentioned, we implicitly assume in the proofs that $\mu$ is aperiodic.

\subsection{Applications of Theorem \ref{thm:PoissonApproximation}  to fringe subtree counts}
\label{ssec:fringe}

We record here two direct consequences of Theorem
\ref{thm:PoissonApproximation}. The first one concerns genuine fringe subtree
counts. In this setting, if the marking function fixes the size of the marked
subtree, then local clustering is automatically excluded: a fringe subtree of
size $k_n$ cannot contain a proper fringe subtree of the same size. Thus
Assumption \ref{hyp:C} becomes automatic, and Theorem
\ref{thm:PoissonApproximation} gives a clean extension of several classical
Poisson approximation results for fringe subtree counts to offspring
distributions in the stable domain of attraction. Recall that $\pi_n=\P(G_{k_n}(\mathcal T)=1)$.

\begin{corollary}
\label{cor:Poisson_fringe}
Assume that $\mu$ is critical and belongs to the domain of attraction
of an $\alpha$-stable law for some $\alpha\in(1,2]$. Let $(k_n)_{n\geq 0}$ be
an integer sequence such that $k_n\to\infty$ and $k_n=o(n)$.
Suppose that, for every $k\ge1$ and every plane tree $\tau$, $
G_k(\tau)=1
\implies
|\tau|=k$.
Then:
\begin{enumerate}
\item[(i)] If $(n\pi_n)$ is bounded, then
\begin{equation}
\label{eq:DTVfringe}
\dTV \left(
\# \{u \in \mathcal{T}^{n} : G_{k_{n}}( \mathcal{T}^{n}_{u})=1\},
\Po(n \pi_{n})
\right)
\quad \mathop{\longrightarrow}_{n \rightarrow \infty} \quad 0.
\end{equation}

\item[(ii)] If $n\pi_n\to\infty$, then
$$
\frac{
\#\{u\in\mathcal T^n: G_{k_n}(\mathcal T^n_u)=1\}
}{n\pi_n}
\quad \mathop{\longrightarrow}^{(\P)}_{n \rightarrow \infty} \quad 1.
$$
In particular,
$$
\#\{u\in\mathcal T^n: G_{k_n}(\mathcal T^n_u)=1\}
\quad \mathop{\longrightarrow}^{(\P)}_{n \rightarrow \infty} \quad \infty.
$$
\end{enumerate}
\end{corollary}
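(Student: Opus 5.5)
The plan is to apply Theorem \ref{thm:PoissonApproximation} directly. We choose a cutoff sequence $(M_n)$ with $k_n\le M_n$, $M_n\to\infty$ and $M_n=o(n)$; for instance $M_n=\max(k_n,\lfloor n^{1-\eta}\rfloor)$, or any $M_n$ with $k_n\le M_n=o(n)$ that is large enough for the estimates below. It then suffices to verify Assumptions \ref{hyp:M} and \ref{hyp:C} for this cutoff.

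\emph{Assumption \ref{hyp:C}.} Since $G_{k_n}(\tau)=1$ forces $|\tau|=k_n$, the event defining $A_n(\ell)$ requires $|\mathcal T|=k_n$ together with $|\mathcal T_{u_{\ell+1}(\mathcal T)}|=k_n$. But $u_{\ell+1}$ is a proper descendant of the root, so $|\mathcal T_{u_{\ell+1}}|<|\mathcal T|$. Hence $A_n(\ell)=0$ for every $\ell\ge1$, and Assumption \ref{hyp:C} holds trivially.

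\emph{Assumption \ref{hyp:M}.} The second requirement is immediate: conditionally on $G_{k_n}(\mathcal T^n_{U_n})=1$ we have $|\mathcal T^n_{U_n}|=k_n\le M_n$, so the probability in question is $0$. For the first requirement we use Corollary \ref{cor:cyclicshift}. On $\{G_{k_n}(\mathcal T)=1\}$ the size is deterministic, so
$$
\P\bigl(G_{k_n}(\mathcal T^n_{U_n})=1\bigr)=\pi_n\,\frac{\Psi_{n-k_n}(0)}{\Psi_n(-1)}.
$$
By the local limit theorem \eqref{eq:LL},
$$
a_{n-k_n}\Psi_{n-k_n}(0)\to d_\alpha(0)
\qquad\text{and}\qquad
a_n\Psi_n(-1)\to d_\alpha(0)>0,
$$
where $d_\alpha(0)>0$ because the stable density is positive on $\R$. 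Since $k_n=o(n)$ and $(a_n)$ is regularly varying of index $1/\alpha$, we have $a_{n-k_n}/a_n\to1$. The ratio $\Psi_{n-k_n}(0)/\Psi_n(-1)$ therefore tends to $1$, which gives $\P(G_{k_n}(\mathcal T^n_{U_n})=1)\sim\pi_n$. In the periodic case the lattice version of the local limit theorem applies, since the factor $d$ cancels along admissible $n$, as noted in Section \ref{sec:toolbox}.

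With both assumptions verified, parts (i) and (ii) follow verbatim from Theorem \ref{thm:PoissonApproximation}. Divergence in probability in (ii) holds because $n\pi_n\to\infty$.

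The only point needing care is the regular-variation step $a_{n-k_n}\sim a_n$, which is where $k_n=o(n)$ enters, and checking that the chosen $M_n$ is a valid cutoff. Neither is a genuine obstacle. The assumption $k_n\to\infty$ is not needed for the verification beyond ensuring that the setting is nondegenerate.
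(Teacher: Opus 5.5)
Your proposal is correct and follows the paper's proof. You check \ref{hyp:M} through Corollary \ref{cor:cyclicshift} and the local limit theorem. You note that $A_n(\ell)=0$, because a proper fringe subtree cannot also have size $k_n$. You then apply Theorem \ref{thm:PoissonApproximation}.

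The only difference is the cutoff. The paper takes $M_n=k_n$, which is where it uses $k_n\to\infty$. Your choice $M_n=\max(k_n,\lfloor n^{1-\eta}\rfloor)$ tends to infinity regardless, so that hypothesis becomes superfluous, as you observe.
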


\begin{proof}
Take the cutoff $M_n=k_n$. Since $k_n\to\infty$ and $k_n=o(n)$, it is admissible. If $U_n$ is a uniform vertex of $\mathcal T^n$, Corollary \ref{cor:cyclicshift} and the implication $G_{k_n}(\tau)=1\Rightarrow |\tau|=k_n$ give
\[
\P(G_{k_n}(\mathcal T^n_{U_n})=1)
=
\P(G_{k_n}(\mathcal T)=1)\frac{\Psi_{n-k_n}(0)}{\Psi_n(-1)}
\sim \pi_n,
\]
and the conditioned marked fringe subtree has size exactly $M_n$. Hence Assumption \ref{hyp:M} holds. Moreover, a fringe subtree of size $k_n$ cannot contain a proper descendant fringe subtree of the same size, so $A_n(\ell)=0$ for every $\ell\ge1$; Assumption \ref{hyp:C} follows. The two conclusions are therefore exactly Theorem \ref{thm:PoissonApproximation}\textup{(i)} and \textup{(ii)}.
\end{proof}

\begin{remark}\label{rem:implication_fringe}
Assume here that $k_n=o(n)$ and $k_n\to\infty$.
Corollary \ref{cor:Poisson_fringe} recovers Theorem 1.3 \textup{(i)} and
\textup{(ii)} of Cai and Devroye~\cite{CD17} by taking
$G_{k_n}(T):=\mathbbm{1}_{\{T=T_n\}}$ for a sequence of plane trees
$(T_n)$ with $|T_n|=k_n$. It also recovers Theorem 1.4 \textup{(i)} and
\textup{(ii)} of \cite{CD17} by taking
$G_{k_n}(T):=\mathbbm{1}_{\{|T|=k_n\}}$.

Furthermore, let $\mathbf n_k=(n_k(i))_{i\ge0}$ be a sequence of
non-negative integers. We say that $\mathbf n_k$ is a degree statistic of
size $k$ if
$$
\sum_{i\ge0} n_k(i)
=
1+\sum_{i\ge0} i n_k(i)
=
k.
$$
Denote by $\mathbbm T_{\mathbf n}$ the set of plane trees with degree
statistic $\mathbf n$, that is, $T\in\mathbbm T_{\mathbf n}$ if
$$
n(i)=\#\{v\in T:k_v(T)=i\}
\qquad\textrm{for all } i\ge0.
$$
Let $\mathbf n_{k_n}$ be a degree statistic of size $k_n$. Taking
$$
  G_{k_n}(T)
  =\mathbbm{1}_{\{T\in\mathbbm T_{\mathbf n_{k_n}}\}},
$$
Corollary \ref{cor:Poisson_fringe} yields the conditioned-Bienaym\'e
analogue of the Poisson part of
Berzunza--Holmgren--Janson~\cite[Theorem~4.2]{BHJ26}. Their theorem is
stated for a uniform tree with a deterministic degree statistic; the
transfer to conditioned Bienaym\'e trees is discussed in their Section~6.
\end{remark}

 \subsection{A criterion for \ref{hyp:M} and for \ref{hyp:S}}
 
 The following result gives a simple criterion that implies \ref{hyp:M}. Although it is not used below, we record it for independent interest.
   
\begin{lemma}
\label{lem:M}
Let $(M_n)$ be a sequence of positive integers such that
$M_n\to\infty$ and $M_n=o(n)$. Assume that $\pi_n>0$ for all sufficiently
large $n$ and that
$$
a_n\,\P\bigl(|\mathcal T|>M_n\mid G_{k_n}(\mathcal T)=1\bigr)
\longrightarrow0.
$$
Then $(G_k)$ satisfies Assumption \ref{hyp:M} along $(k_n)$ with respect to
$(M_n)$.
\end{lemma}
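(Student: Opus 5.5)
The plan is to use Corollary \ref{cor:cyclicshift} twice and compare the ratio $\Psi_{n-m}(0)/\Psi_n(-1)$ with $1$ using the local limit theorem \eqref{eq:LL}. Applying the corollary once to $G_{k_n}$, and once to $G_{k_n}(\cdot)\mathbbm{1}_{\{|\cdot|>M_n\}}$, gives
$$
\P(G_{k_n}(\mathcal T^n_{U_n})=1)=\frac{1}{\Psi_n(-1)}\E\bigl[\mathbbm{1}_{\{G_{k_n}(\mathcal T)=1\}}\Psi_{n-|\mathcal T|}(0)\bigr],
$$
together with the analogous identity carrying the extra indicator $\mathbbm{1}_{\{|\mathcal T|>M_n\}}$. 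I will split each expectation according to whether $|\mathcal T|\le M_n$ or $|\mathcal T|>M_n$.

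\textbf{Small trees, $|\mathcal T|=m\le M_n$.} Since $M_n=o(n)$, we have $n-m\sim n$ uniformly in $m\le M_n$. Regular variation of $(a_n)$ gives $a_{n-m}/a_n\to1$ uniformly. By \eqref{eq:LL}, $a_{n-m}\Psi_{n-m}(0)\to d_\alpha(0)$ and $a_n\Psi_n(-1)\to d_\alpha(0)$; the second uses $1/a_n\to0$ and the continuity of $d_\alpha$. Note that $d_\alpha(0)>0$ for a spectrally positive stable law with $\alpha>1$. Hence
$$
\sup_{m\le M_n}\Bigl|\frac{\Psi_{n-m}(0)}{\Psi_n(-1)}-1\Bigr|\to0,
$$
and the small-tree part of the first expectation equals $(1+o(1))\,\P(G_{k_n}(\mathcal T)=1,|\mathcal T|\le M_n)$.

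\textbf{Large trees, $|\mathcal T|>M_n$.} This is the main point. I use the crude bound
$$
\sup_{j\ge0}\Psi_j(0)\le 1 \qquad\text{and}\qquad \Psi_n(-1)\ge c/a_n
$$
for $n$ large, which follows from \eqref{eq:LL}. The large-tree part is then at most
$$
\frac{a_n}{c}\,\P(G_{k_n}(\mathcal T)=1,|\mathcal T|>M_n)=\frac{1}{c}\,\pi_n\cdot a_n\P(|\mathcal T|>M_n\mid G_{k_n}(\mathcal T)=1)=o(\pi_n)
$$
by the hypothesis. This is exactly why the hypothesis carries the factor $a_n$: we only know that $\Psi_{n-m}(0)/\Psi_n(-1)$ is bounded by $O(a_n)$ uniformly in $m$. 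A sharper bound, obtained by using $\Psi_j(0)\le C/a_j$ for $j\ge1$, is not needed.

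\textbf{Conclusion.} Since the hypothesis also gives $\P(G_{k_n}(\mathcal T)=1,|\mathcal T|\le M_n)=\pi_n(1-o(1))$, we obtain $\P(G_{k_n}(\mathcal T^n_{U_n})=1)=\pi_n(1+o(1))$, which is the first part of \ref{hyp:M}. The second identity, with the extra indicator, is bounded by the same large-tree estimate and is therefore $o(\pi_n)$. Dividing by $\P(G_{k_n}(\mathcal T^n_{U_n})=1)\sim\pi_n$ yields the conditional statement in \ref{hyp:M}. The only delicate issues are the uniformity of the local limit ratio for $m\le M_n$ and the lower bound on $\Psi_n(-1)$. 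Both follow directly from \eqref{eq:LL} and regular variation, together with periodic adjustments along admissible $n$ as described in Sec.~\ref{sec:toolbox}.
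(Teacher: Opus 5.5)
Your proposal is correct and follows essentially the same route as the paper. Both arguments apply Corollary \ref{cor:cyclicshift} and use the local limit theorem to get $\Psi_{n-m}(0)/\Psi_n(-1)\to1$ uniformly for $m\le M_n$. On $\{|\mathcal T|>M_n\}$, both use the crude bound $\Psi_{n-m}(0)/\Psi_n(-1)\le Ca_n$ together with the $a_n$-weighted hypothesis. The paper merely writes this as $\pi_n\E[R_n(|\mathcal T|)\mid G_{k_n}(\mathcal T)=1]$ with $R_n(m)=\Psi_{n-m}(0)/\Psi_n(-1)$.
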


\begin{proof}
Write $\pi_n=\P(G_{k_n}(\mathcal T)=1)$ and set
$$
R_n(m)=\frac{\Psi_{n-m}(0)}{\Psi_n(-1)}.
$$
The local limit theorem gives $R_n(m)=1+o(1)$ uniformly for
$m\le M_n=o(n)$ as well as $R_n(m)\le Ca_n$ for every $m$. Hence
$$
\E\left[R_n(|\mathcal T|)
\mathbbm{1}_{\{|\mathcal T|>M_n\}} \mid G_{k_n}(\mathcal T)=1 \right]
\le Ca_n\P(|\mathcal T|>M_n \mid G_{k_n}(\mathcal T)=1)=o(1),
$$
and the assumption also gives $\P(|\mathcal T|>M_n \mid G_{k_n}(\mathcal T)=1)=o(1)$.
Corollary \ref{cor:cyclicshift} therefore yields
$$
\P(G_{k_n}(\mathcal T^n_{U_n})=1)
=\pi_n\E[R_n(|\mathcal T|)  \mid G_{k_n}(\mathcal T)=1]\sim\pi_n,
$$
while the same display restricted to $\{|\mathcal T|>M_n\}$ is
$o(\pi_n)$. Dividing the latter estimate by the former proves the
conditional cutoff in Assumption \ref{hyp:M}.
\end{proof}
 
The following result gives a simple sufficient condition for Assumption
\ref{hyp:S}  when the cutoff is of the
form $M_n=\lfloor n^{1-\eta}\rfloor$.

\begin{lemma}
\label{lem:critM}
Set $
  \P_k(\,\cdot\,)
  \coloneqq
  \P(\,\cdot\mid G_k(\mathcal T)=1)$ and $  \E_k[\,\cdot\,]
  \coloneqq
  \E[\,\cdot\mid G_k(\mathcal T)=1]$. Assume that
there exist nonnegative integer-valued random variables $C_k,L_k$ such that
one can construct copies $\mathcal T^{(1)},\mathcal T^{(2)},\ldots$ which,
conditionally on $(C_k,L_k)$, are independent and each have the same law
as $\mathcal T$, with
$$
  |\mathcal T|
  =
  C_k+\sum_{i=1}^{L_k}|\mathcal T^{(i)}|
  \qquad\text{under }\P_k \qquad  \textrm{and} \qquad 
  \E_{k_n}[C_{k_n}+L_{k_n}]
  =
  n^{o(1)}.
$$
Then, for every $\eta\in(0,1-1/\alpha)$, Assumption \ref{hyp:S} holds for
$(G_k)$ along $(k_n)$ with respect to $
  M_n=\lfloor n^{1-\eta}\rfloor$.
\end{lemma}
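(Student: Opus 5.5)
We check the two conditions of Assumption \ref{hyp:S} separately, using the same decomposition $|\mathcal T|=C_k+\sum_{i\le L_k}|\mathcal T^{(i)}|$ that is given in the hypothesis. The only inputs are Markov's inequality and regular variation of $(a_n)$. Fix $\eta\in(0,1-1/\alpha)$, set $M_n=\lfloor n^{1-\eta}\rfloor$, and note that $M_n\to\infty$ and $M_n=o(n)$, so $M_n$ is an admissible cutoff. Write $E_n=\E_{k_n}[C_{k_n}+L_{k_n}]=n^{o(1)}$, which means $E_n\le n^{\delta}$ for every fixed $\delta>0$ and $n$ large.

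\emph{First condition.} Fix $\varepsilon>0$. By Markov's inequality,
$$a_n\P_{k_n}(C_{k_n}>\varepsilon M_n)\le \frac{a_n E_n}{\varepsilon M_n}.$$
Since $(a_n)$ is regularly varying of index $1/\alpha$, Potter bounds \eqref{eq: potter_bound} give $a_n\le n^{1/\alpha+\delta}$ for large $n$. The right-hand side is therefore at most
$$\varepsilon^{-1}n^{1/\alpha+2\delta-(1-\eta)}$$
up to constants. Because $\eta<1-1/\alpha$, the exponent $1/\alpha-(1-\eta)$ is strictly negative. Choosing $\delta$ small enough makes the whole exponent negative, so this term tends to $0$.

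\emph{Second condition.} By regular variation, $a_{M_n}\ge M_n^{1/\alpha-\delta}\ge n^{(1-\eta)(1/\alpha-\delta)}/2$ for large $n$. Hence
$$\frac{\E_{k_n}[L_{k_n}]}{a_{M_n}}\le \frac{2E_n}{n^{(1-\eta)(1/\alpha-\delta)}}\le 2n^{\delta-(1-\eta)(1/\alpha-\delta)}.$$
The limiting exponent $-(1-\eta)/\alpha$ is strictly negative, so a small $\delta$ makes this tend to $0$.

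The only point requiring care is the uniformity of the regular-variation bounds along the sequence $M_n\to\infty$. This is immediate from the Potter bounds applied to the slowly varying factor $a_m/m^{1/\alpha}$, taken with reference point $x$ fixed and large. No genuine obstacle arises: the restriction $\eta<1-1/\alpha$ is used only in the first condition, and the second condition in fact holds for every $\eta\in(0,1)$.
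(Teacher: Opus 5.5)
Your proof is correct and follows the paper's argument: Markov's inequality applied to $\E_{k_n}[C_{k_n}]$ gives the first condition, and the direct bound on $\E_{k_n}[L_{k_n}]/a_{M_n}$ gives the second, using regular variation in the form $a_n=n^{1/\alpha+o(1)}$ and $a_{M_n}=n^{(1-\eta)/\alpha+o(1)}$. Your explicit $\delta$-bookkeeping via Potter bounds just spells out the $n^{o(1)}$ exponents the paper uses, and your remark that $\eta<1-1/\alpha$ is needed only for the first condition is accurate.
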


\begin{proof}
Fix $\eta\in(0,1-1/\alpha)$ and set
$M_n=\lfloor n^{1-\eta}\rfloor$. The required decomposition is already part
of the assumptions, so it remains only to verify the two quantitative
conditions in Assumption \ref{hyp:S}.

Since $(a_n)$ is regularly varying with index $1/\alpha$, we have $a_n=n^{1/\alpha+o(1)}$ and
$  a_{M_n}
  =
  n^{(1-\eta)/\alpha+o(1)}$.
For every fixed $\varepsilon>0$, Markov's inequality gives
$$  a_n\P_{k_n}(C_{k_n}>\varepsilon M_n) \le
  \frac{a_n}{\varepsilon M_n}\E_{k_n}[C_{k_n}]=
  n^{1/\alpha-1+\eta+o(1)}.$$
  Since $\eta<1-1/\alpha$, the exponent
$1/\alpha-1+\eta$ is negative, and therefore $
  a_n\P_{k_n}(C_{k_n}>\varepsilon M_n)
\rightarrow 0$.
Similarly,
$$
  \frac{\E_{k_n}[L_{k_n}]}{a_{M_n}}
  \le
  \frac{n^{o(1)}}{n^{(1-\eta)/\alpha+o(1)}}
  =
  n^{-(1-\eta)/\alpha+o(1)}
  \longrightarrow0.
$$
Thus Assumption \ref{hyp:S} holds along $(k_n)$ with respect to $(M_n)$.
\end{proof}
 
\subsection{Some consequences of Assumption \ref{hyp:S}}
\label{ssec:consequenceS}

Here we explore some consequences of Assumption \ref{hyp:S}. We first need a
technical estimate for Bienaym\'e forests and then derive a useful
microscopicity consequence of \ref{hyp:S}. As before, we write
$\Psi_m(i)=\P(S_m=i)$, with the convention $\Psi_m(i)=0$ if $m<0$.

  \begin{lemma}
\label{lem:forest_estimates}
Let $\mathcal G_\ell$ be a forest of $\ell$ independent Bienaym\'e trees.
There exists a constant $C>0$ such that, uniformly in integers
$\ell\ge1$ and $1\le m\le n/2$,
$$
\P(|\mathcal G_\ell|>m)\le C\frac{\ell}{a_m}
\quad\text{and}\quad
\frac{1}{\Psi_n(-1)}
\E\left[
\sup_{x\in\mathbb Z}\Psi_{n-|\mathcal G_\ell|}(x)
\mathbbm{1}_{\{|\mathcal G_\ell|>m\}}
\right]
\le C\frac{\ell}{a_m}.
$$
\end{lemma}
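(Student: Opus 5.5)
The plan is to pass through the random walk coding of forests. A forest $\mathcal G_\ell$ of $\ell$ independent $\mu$-Bienaym\'e trees is coded by the walk $(S_k)$ stopped at $\zeta_\ell=\inf\{k: S_k=-\ell\}$, so $|\mathcal G_\ell|$ has the law of $\zeta_\ell$. By Kemperman's formula (the cyclic lemma),
$$
\P(|\mathcal G_\ell|=j)=\frac{\ell}{j}\,\Psi_j(-\ell).
$$
Both estimates will be derived from this identity together with the local limit theorem \eqref{eq:LL}. That theorem gives the uniform bound $\sup_x\Psi_j(x)\le C/a_j$ for all $j\ge1$, and $\Psi_n(-1)\ge c/a_n$ for $n$ large, since $d_\alpha(0)>0$.

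\textbf{First bound.} Summing Kemperman's formula gives
$$
\P(|\mathcal G_\ell|>m)=\sum_{j>m}\frac{\ell}{j}\Psi_j(-\ell)\le C\ell\sum_{j>m}\frac{1}{j\,a_j}.
$$
Since $(a_j)$ is regularly varying with index $1/\alpha$, the sequence $1/(ja_j)$ is regularly varying with index $-1-1/\alpha<-1$. Karamata's theorem (or the Potter bounds \eqref{eq: potter_bound}) then yields $\sum_{j>m}1/(ja_j)\le C'/a_m$, uniformly in $m\ge1$ after enlarging the constant. No assumption on $\ell$ is needed here; when $\ell\ge a_m$ the bound is trivial anyway.

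\textbf{Second bound.} Write the expectation as a sum over $j$:
$$
\frac{1}{\Psi_n(-1)}\sum_{j>m}\frac{\ell}{j}\Psi_j(-\ell)\sup_x\Psi_{n-j}(x),
$$
where only $j<n$ contributes, and $j=n$ contributes at most $\ell\Psi_n(-\ell)/(n\Psi_n(-1))\le C\ell/n$. Split at $j=n/2$.

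\emph{Range $m<j\le n/2$.} Here $\sup_x\Psi_{n-j}(x)\le C/a_{n-j}\le C'/a_n$, using $n-j\ge n/2$ and regular variation. This factor cancels against $1/\Psi_n(-1)\le Ca_n$, leaving $C\ell\sum_{j>m}1/(ja_j)\le C\ell/a_m$ as in the first bound.

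\emph{Range $n/2<j<n$.} Bound $\Psi_j(-\ell)\le C/a_n$ and $1/j\le 2/n$. This leaves
$$
C\,\ell\,\frac{a_n}{n a_n}\sum_{i=1}^{n/2}\sup_x\Psi_i(x)\le \frac{C\ell}{n}\sum_{i\le n/2}\frac{1}{a_i}\le C\ell\,\frac{1}{a_n},
$$
because $\sum_{i\le N}1/a_i\sim cN/a_N$, the index $1-1/\alpha$ being positive. Since $m\le n/2$, we have $a_m\le a_n$, so this is at most $C\ell/a_m$. The $j=n$ term is also fine, as $\ell/n\le \ell/a_m$.

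The main obstacle is modest: justifying the bound for small $n$ where the local limit theorem is not yet effective, and the periodic case. For finitely many small $n$, one uses that $\Psi_n(-1)>0$ for admissible $n$ and absorbs these cases into $C$. For the periodic case, the lattice version of the local limit theorem cancels the factor $d$, as the paper notes. The regular-variation summations are routine applications of Karamata's theorem.
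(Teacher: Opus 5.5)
Your proposal is correct and follows essentially the same route as the paper: Dwass' (Kemperman's) formula with the uniform local bound and Karamata's theorem for the tail, then splitting the bridge-weighted sum at $j=n/2$ and using $\sum_{i\le n/2}\sup_x\Psi_i(x)\le Cn/a_n$ on the upper range. The only cosmetic difference is that you treat the term $j=n$ (where $\Psi_0(0)=1$) separately, whereas the paper absorbs it into the sum over $h=n-q\ge0$.
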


\begin{proof}
By Dwass' formula, for every $q\ge1$,
$\P(|\mathcal G_\ell|=q)=(\ell/q)\P(S_q=-\ell)$, with the convention that
this quantity is $0$ when the event is impossible. The local bound following
from \eqref{eq:LL} gives $\P(S_q=-\ell)\le C/a_q$, uniformly in $q$ and
$\ell$. Hence
$$
\P(|\mathcal G_\ell|>m)
\le
C\ell\sum_{q>m}\frac{1}{q a_q}
\le
C\frac{\ell}{a_m},
$$
by Karamata's theorem.

For the second estimate, write
$$
\E\left[
\sup_x\Psi_{n-|\mathcal G_\ell|}(x)
\mathbbm{1}_{\{|\mathcal G_\ell|>m\}}
\right]
=
\sum_{q>m}\P(|\mathcal G_\ell|=q)\sup_x\Psi_{n-q}(x),
$$
where $\Psi_{n-q}\equiv0$ if $q>n$. We split the sum into $m<q\le n/2$ and
$n/2<q\le n$. For $q\le n/2$, the local bound and regular variation give
$\sup_x\Psi_{n-q}(x)\le C/a_n$. Therefore this part is bounded by
$C a_n^{-1}\P(|\mathcal G_\ell|>m)$, and after division by
$\Psi_n(-1)\asymp a_n^{-1}$ it is at most $C\ell/a_m$.

For $q>n/2$, put $h=n-q$. Using Dwass' formula and the local bound again,
this contribution divided by $\Psi_n(-1)$ is at most
$$
C a_n\ell
\sum_{q>n/2}\frac{1}{q a_q}\sup_x\Psi_{n-q}(x).
$$
Since $q\asymp n$ and $a_q\asymp a_n$ on this range, this is bounded by
$$
\frac{C\ell}{n}\sum_{h=0}^{n/2}\sup_x\Psi_h(x).
$$
As $\sup_x\Psi_h(x)\le C/a_h$ for $h\ge1$ and $\Psi_0(0)=1$, Karamata's
theorem gives $\sum_{h=0}^{n/2}\sup_x\Psi_h(x)\le Cn/a_n$. Hence the
contribution of the range $q>n/2$ is at most $C\ell/a_n\le C\ell/a_m$, since
$m\le n/2$. This completes the proof.
\end{proof}

\begin{lemma}
\label{lem:consequenceM}
Let $(G_k)$ be a sequence of $\{0,1\}$-valued functions on finite plane trees,
and fix a sequence $(k_n)$. Assume that $(G_k)$ satisfies Assumption
\ref{hyp:S} along $(k_n)$ with respect to a cutoff sequence $(M_{n})$.   Then
\begin{equation}
\label{eq:record} \P(|\mathcal T|>M_n/4 \mid G_{k_n}(\mathcal T)=1)\to0 \quad \textrm{and} \quad 
 \E\left[
\frac{\sup_x\Psi_{n-|\mathcal T|}(x)}{\Psi_n(-1)}
\mathbbm{1}_{\{|\mathcal T|>M_n/4\}} \mid G_{k_n}(\mathcal T)=1
\right]\to0.
\end{equation}
\end{lemma}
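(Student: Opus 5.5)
Under $\P_{k_n}$ we use the decomposition from Assumption \ref{hyp:S}, $|\mathcal T| = C_{k_n}+|\mathcal G_{L_{k_n}}|$. Here $\mathcal G_\ell$ is the forest of the $\ell$ independent copies $\mathcal T^{(1)},\dots,\mathcal T^{(\ell)}$; conditionally on $(C_{k_n},L_{k_n})$ it is a Bienaym\'e forest with $\ell=L_{k_n}$ trees. The event $\{|\mathcal T|>M_n/4\}$ is contained in $\{C_{k_n}>M_n/8\}\cup\{|\mathcal G_{L_{k_n}}|>M_n/8\}$, and we treat the two pieces separately, for both statements in \eqref{eq:record}.

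\textbf{First statement.} By Assumption \ref{hyp:S} with $\varepsilon=1/8$, $\P_{k_n}(C_{k_n}>M_n/8)\le a_n\P_{k_n}(C_{k_n}>M_n/8)\to0$. For the forest part, we condition on $L_{k_n}$ and apply the first bound of Lemma \ref{lem:forest_estimates} with $m=\lfloor M_n/8\rfloor\le n/2$ (valid for large $n$ since $M_n=o(n)$). This gives
\[
\P_{k_n}(|\mathcal G_{L_{k_n}}|>M_n/8)\le C\,\E_{k_n}[L_{k_n}]/a_{\lfloor M_n/8\rfloor}.
\]
Regular variation of $(a_n)$ gives $a_{\lfloor M_n/8\rfloor}\asymp a_{M_n}$, so this bound tends to $0$ by Assumption \ref{hyp:S}. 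Note that the case $L_{k_n}=0$ contributes nothing.

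\textbf{Second statement, $C$-part.} Bound $\sup_x\Psi_{n-|\mathcal T|}(x)\le C/a_{n-|\mathcal T|}$ crudely: the local bound gives $\sup_x\Psi_h(x)\le 1$ for all $h$, and $\Psi_n(-1)\asymp a_n^{-1}$ by \eqref{eq:LL}. The ratio is therefore at most $Ca_n$, and the contribution of $\{C_{k_n}>M_n/8\}$ is at most $Ca_n\P_{k_n}(C_{k_n}>M_n/8)\to0$. This is exactly why Assumption \ref{hyp:S} carries the factor $a_n$.

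\textbf{Second statement, forest part on $\{C_{k_n}\le M_n/8\}$.} Here $n-|\mathcal T| = (n-C)-|\mathcal G_L|$, so the second bound of Lemma \ref{lem:forest_estimates} must be applied with $n$ replaced by $n'=n-C$. It gives
\[
\frac{1}{\Psi_{n'}(-1)}\E\Big[\sup_x\Psi_{n'-|\mathcal G_\ell|}(x)\mathbbm 1_{\{|\mathcal G_\ell|>m\}}\Big]\le C\ell/a_m,
\]
with $m=\lfloor M_n/8\rfloor\le n'/2$. Since $C\le M_n/8=o(n)$, the local limit theorem gives $\Psi_{n'}(-1)/\Psi_n(-1)\to1$ uniformly in such $C$, so the prefactor can be switched back to $1/\Psi_n(-1)$ at the cost of a constant. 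Taking expectations over $(C_{k_n},L_{k_n})$ yields a bound $C\E_{k_n}[L_{k_n}]/a_{M_n}\to0$.

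The only delicate point is this last uniform comparison of $\Psi_{n-C}(-1)$ with $\Psi_n(-1)$. It follows from \eqref{eq:LL} together with $d_\alpha(0)>0$ and $a_{n-C}/a_n\to1$ uniformly in $C=o(n)$. The constant in Lemma \ref{lem:forest_estimates} is uniform in $n$, so it applies to the shifted index $n'$.
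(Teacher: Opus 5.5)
Your proof is correct and follows the paper's argument essentially step for step. It uses the same split $\{C_{k_n}>M_n/8\}\cup\{|\mathcal G_{L_{k_n}}|>M_n/8\}$. For the skeleton part it uses the trivial bound $\sup_x\Psi\le1$, $\Psi_n(-1)\asymp a_n^{-1}$ and the factor $a_n$ in Assumption \ref{hyp:S}. For the forest part it uses Lemma \ref{lem:forest_estimates} at the shifted index $n-c$, together with $\Psi_{n-c}(-1)\sim\Psi_n(-1)$ uniformly in $c\le M_n/8$.

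One small point: the phrase ``$\le C/a_{n-|\mathcal T|}$'' in your $C$-part is superfluous. Only the trivial bound by $1$ is used there, and it is all that is needed.
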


\begin{proof}
Write $
  \P_{k_n}(\,\cdot\,)
  =
  \P(\,\cdot\mid G_{k_n}(\mathcal T)=1)$ and   $\E_{k_n}[\,\cdot\,]
  =
  \E[\,\cdot\mid G_{k_n}(\mathcal T)=1]$.
By Assumption \ref{hyp:S}, under $\P_{k_n}$ we may write
$|\mathcal T|=C_n+|\mathcal F_{L_n}|$, where
$C_n=C_{k_n}$, $L_n=L_{k_n}$, and, conditionally on $(C_n,L_n)$,
$\mathcal F_{L_n}$ is a forest of $L_n$ independent Bienaym\'e trees.

We first prove the unweighted estimate. Since
$\{|\mathcal T|>M_n/4\}$ is contained in
$\{C_n>M_n/8\}\cup\{|\mathcal F_{L_n}|>M_n/8\}$, we have
$
  \P_{k_n}(|\mathcal T|>M_n/4)
  \le
  \P_{k_n}(C_n>M_n/8)
  +
  \P_{k_n}(|\mathcal F_{L_n}|>M_n/8)$.
The first term tends to $0$, since Assumption \ref{hyp:S} gives
$a_n\P_{k_n}(C_n>M_n/8)\to0$ and $a_n\to\infty$. For the second term,
Lemma \ref{lem:forest_estimates}, conditionally on $L_n$, and regular
variation of $(a_m)$ give
\[
  \P_{k_n}(|\mathcal F_{L_n}|>M_n/8)
  \le
  C\frac{\E_{k_n}[L_n]}{a_{M_n}}
  \longrightarrow0.
\]
This proves the first assertion of \eqref{eq:record}.

For the bridge-weighted estimate, split again according to whether
$C_n>M_n/8$. Since $\sup_x\Psi_{n-|\mathcal T|}(x)\le1$ and
$\Psi_n(-1)\sim c/a_{n}$ for some $c>0$, the contribution of this event is at most
$Ca_n\P_{k_n}(C_n>M_n/8)=o(1)$.

On $\{C_n\le M_n/8\}$, the event $|\mathcal T|>M_n/4$ implies
$|\mathcal F_{L_n}|>M_n/8$. Uniformly for $c\le M_n/8$, 
$\Psi_{n-c}(-1)\sim \Psi_n(-1)$. Moreover, $M_n/8\le (n-c)/2$ for all large
$n$. Therefore Lemma \ref{lem:forest_estimates}, applied with $n-c$ in place
of $n$, yields
\[
  \E\left[
    \frac{\sup_x\Psi_{n-c-|\mathcal F_\ell|}(x)}{\Psi_n(-1)}
    \mathbbm{1}_{\{|\mathcal F_\ell|>M_n/8\}}
  \right]
  \le
  C\frac{\ell}{a_{M_n}}.
\]
Taking expectations with respect to $(C_n,L_n)$ shows that the contribution
of $\{C_n\le M_n/8\}$ is bounded by
$C\E_{k_n}[L_n]/a_{M_n}=o(1)$. Combining the two contributions proves the
second assertion of \eqref{eq:record}.
\end{proof}

 We  mention a  consequence of Lemma \ref{lem:consequenceM} which will be useful later.
 
\begin{corollary}
\label{cor:SM}
Let $(G_k)$ be a sequence of $\{0,1\}$-valued functions on finite plane
trees, let $(k_n)$ be a sequence of integers, and let $(M_n)$ be a cutoff
sequence. Assume that $(G_k)$ satisfies Assumption \ref{hyp:S} along $(k_n)$
with respect to $(M_n)$. Then:
\begin{enumerate}
\item[(i)] $(G_k)$ satisfies Assumption \ref{hyp:M} along $(k_n)$ with
respect to $(M_n)$;
\item[(ii)] we have
$
\E\left[
\#\{u\in\mathcal T^n:G_{k_n}(\mathcal T^n_u)=1\}
\right]
\sim
n\P(G_{k_n}(\mathcal T)=1)
$;
\item[(iii)] if $\P(G_{k_n}(\mathcal T)=1)=O(1/n)$, then
$\P(G_{k_n}(\mathcal T^n)=1)\to0$.
\end{enumerate}
\end{corollary}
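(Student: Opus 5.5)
The plan is to derive all three items from Corollary \ref{cor:cyclicshift} combined with Lemma \ref{lem:consequenceM}. Write $\P_{k_n}$ for conditioning on $G_{k_n}(\mathcal T)=1$ and set $R_n(m)=\Psi_{n-m}(0)/\Psi_n(-1)$. Corollary \ref{cor:cyclicshift} gives
$$
\P(G_{k_n}(\mathcal T^n_{U_n})=1)=\pi_n\,\E_{k_n}[R_n(|\mathcal T|)].
$$
We split this expectation according to whether $|\mathcal T|\le M_n/4$ or $|\mathcal T|>M_n/4$.

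\emph{The two ranges.} On $\{|\mathcal T|\le M_n/4\}$, the local limit theorem \eqref{eq:LL} together with $M_n=o(n)$ and regular variation of $(a_n)$ yields $R_n(m)=1+o(1)$ uniformly in $m\le M_n/4$. By the first part of \eqref{eq:record}, this range therefore contributes $1+o(1)$. On $\{|\mathcal T|>M_n/4\}$, we use $R_n(m)\le \sup_x\Psi_{n-m}(x)/\Psi_n(-1)$, and the second part of \eqref{eq:record} shows that this contribution is $o(1)$. Hence $\P(G_{k_n}(\mathcal T^n_{U_n})=1)\sim\pi_n$, and the mass coming from $\{|\mathcal T|>M_n\}\subset\{|\mathcal T|>M_n/4\}$ is $o(\pi_n)$. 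Dividing the second estimate by the first gives the conditional cutoff, so Assumption \ref{hyp:M} holds and (i) is proved. The only delicate point is that we need the bridge-weighted tail estimate rather than the plain tail, because $R_n$ can be as large as order $a_n$ when $|\mathcal T|$ is close to $n$. Lemma \ref{lem:consequenceM} supplies exactly this weighted estimate.

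\emph{Item (ii).} By exchangeability (equivalently, the cyclic-shift identity \eqref{eq:shifts}),
$$
\E[\#\{u\in\mathcal T^n:G_{k_n}(\mathcal T^n_u)=1\}]=n\,\P(G_{k_n}(\mathcal T^n_{U_n})=1).
$$
The right-hand side is $\sim n\pi_n$ by the estimate just proved.

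\emph{Item (iii).} The event $G_{k_n}(\mathcal T^n)=1$ says that the fringe subtree at the root is marked. Since that fringe subtree has size $n>M_n$, we get
$$
\P(G_{k_n}(\mathcal T^n)=1)\le n\,\P\bigl(G_{k_n}(\mathcal T^n_{U_n})=1,\ |\mathcal T^n_{U_n}|>M_n\bigr),
$$
because $U_n$ equals the root with probability $1/n$. By the previous step, the right-hand side is $n\cdot o(\pi_n)=o(n\pi_n)$, which is $o(1)$ when $\pi_n=O(1/n)$.
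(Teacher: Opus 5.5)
Your proof is correct. Items (i) and (ii) follow the paper's argument: Corollary~\ref{cor:cyclicshift} combined with both estimates of Lemma~\ref{lem:consequenceM}. Splitting at $M_n/4$ rather than at $M_n$ changes nothing.

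For (iii) you take a genuinely different and shorter route. The vertex $U_n$ is the root with probability $1/n$, and the root's fringe subtree has size $n>M_n$. Hence $\P(G_{k_n}(\mathcal T^n)=1)\le n\,\P\bigl(G_{k_n}(\mathcal T^n_{U_n})=1,\ |\mathcal T^n_{U_n}|>M_n\bigr)=o(n\pi_n)$, so (iii) falls out of the bridge-weighted tail already used in (i). Via Corollary~\ref{cor:cyclicshift}, the root event is exactly the $|\mathcal T|=n$ term. There $\Psi_{n-|\mathcal T|}(0)/\Psi_n(-1)=1/\Psi_n(-1)\asymp a_n$, and the second estimate of \eqref{eq:record} already controls this term, through the $h=0$ contribution $\Psi_0(0)=1$ in Lemma~\ref{lem:forest_estimates}.

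The paper instead writes $\P(G_{k_n}(\mathcal T^n)=1)=\pi_n\P_n(|\mathcal T|=n)/\P(|\mathcal T|=n)$ and recomputes this term directly from the decomposition in Assumption~\ref{hyp:S}. It uses Dwass' formula and the local limit bound, splits on $\{C_n\le n/2\}$, and obtains a bound of the form $C\pi_n\E_n[L_n]+Cn\pi_n a_n\P_n(C_n>n/2)$.

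Your version buys brevity. It shows that (iii) is really the statement $\P(G_{k_n}(\mathcal T^n)=1)=o(n\pi_n)$, with the hypothesis $\pi_n=O(1/n)$ used only in the last line. The paper's computation is more explicit about how the bound depends on the skeleton, for instance through $\E_n[L_n]$. However, it re-derives information that Lemma~\ref{lem:consequenceM} already contains.
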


\begin{proof}
Set $\pi_n=\P(G_{k_n}(\mathcal T)=1)$. We may assume that $\pi_n>0$ for all
large $n$, and write
$\P_n(\,\cdot\,)=\P(\,\cdot\mid G_{k_n}(\mathcal T)=1)$ and $\E_n$ for the
corresponding expectation.

Let $U_n$ be a uniform vertex of $\mathcal T^n$, independent of
$\mathcal T^n$. By Corollary \ref{cor:cyclicshift},
$$
\P(G_{k_n}(\mathcal T^n_{U_n})=1)
=
\pi_n\E_n\left[
\frac{\Psi_{n-|\mathcal T|}(0)}{\Psi_n(-1)}
\right].
$$
On $\{|\mathcal T|\le M_n\}$, the ratio in the expectation converges
uniformly to $1$, since $M_n=o(n)$. The first estimate of
Lemma \ref{lem:consequenceM} shows that
$\P_n(|\mathcal T|>M_n)\to0$, while its second estimate shows that the
bridge-weighted contribution of $\{|\mathcal T|>M_n\}$ tends to $0$.
Consequently,
\begin{equation}
\label{eq:Fthetan}
\P(G_{k_n}(\mathcal T^n_{U_n})=1)\sim\pi_n.
\end{equation}
Moreover, again by Lemma \ref{lem:consequenceM},
$
\P(
G_{k_n}(\mathcal T^n_{U_n})=1,\
|\mathcal T^n_{U_n}|>M_n
)
=o(\pi_n)$.
Dividing by \eqref{eq:Fthetan} proves Assumption \ref{hyp:M}, and hence
\textup{(i)}. Part \textup{(ii)} follows from
$$
\E\left[
\#\{u\in\mathcal T^n:G_{k_n}(\mathcal T^n_u)=1\}
\right]
=
n\P(G_{k_n}(\mathcal T^n_{U_n})=1)
$$
and \eqref{eq:Fthetan}.

It remains to prove \textup{(iii)}. Assume that $\pi_n=O(1/n)$. By
conditioning on $|\mathcal T|=n$,
$$
\P(G_{k_n}(\mathcal T^n)=1)
=
\pi_n\frac{\P_n(|\mathcal T|=n)}{\P(|\mathcal T|=n)}.
$$
By Dwass' formula and the local limit theorem,
$\P(|\mathcal T|=n)\asymp 1/(na_n)$ along admissible values of $n$.

Under $\P_n$, write
$|\mathcal T|=C_n+\sum_{i=1}^{L_n}|\mathcal T^{(i)}|$, where
$C_n=C_{k_n}$ and $L_n=L_{k_n}$. On $\{C_n\le n/2\}$, conditionally on
$(C_n,L_n)=(c,\ell)$, Dwass' formula and the local limit bound give, for
$\ell\ge1$,
$$
\P\left(\sum_{i=1}^{\ell}|\mathcal T^{(i)}|=n-c\right)
=
\frac{\ell}{n-c}\P(S_{n-c}=-\ell)
\le
\frac{C\ell}{na_n}.
$$
It follows that the contribution of $\{C_n\le n/2\}$ to
$\P(G_{k_n}(\mathcal T^n)=1)$ is bounded by
$C\pi_n\E_n[L_n]$. By Assumption \ref{hyp:S},
$\E_n[L_n]=o(a_{M_n})$. Since $(a_m)$ is regularly varying with index
$1/\alpha<1$, we have $a_{M_n}=o(M_n)=o(n)$, and therefore
$$
\pi_n\E_n[L_n]
=
o\left(\frac{a_{M_n}}{n}\right)
=
o(1).
$$

On the other hand, since $M_n=o(n)$, for all large $n$ the event
$\{C_n>n/2\}$ is contained in $\{C_n>M_n\}$. Its contribution is bounded by
$
C\pi_nna_n\P_n(C_n>n/2)
\le
C a_n\P_n(C_n>M_n)
$,
which tends to $0$ by Assumption \ref{hyp:S}, applied with
$\varepsilon=1$. This proves \textup{(iii)}.
\end{proof}

\subsection{Applications of Theorem \ref{thm:PoissonApproximation} using Assumption \ref{hyp:S}}

Here, taking Theorem \ref{thm:PoissonApproximation}  for granted, we give two applications obtained by checking assumption \ref{hyp:S} (which implies \ref{hyp:M} by Corollary \ref{cor:SM} (i)).

The first one concerns raw non-fringe subtree counts. Recall that, for two plane
trees $T'$ and $T$, the notation $T'\prec T$ means that $T$ can be obtained
from $T'$ by grafting plane trees onto the leaves of $T'$. Applying Theorem
\ref{thm:PoissonApproximation} with
$$
G_{k_n}(T):=\mathbbm{1}_{\{T_{k_n}\prec T\}}
$$
gives a Poisson approximation for
$$
\#\{u\in\mathcal T^n : T_{k_n}\prec \mathcal T^n_u\},
$$
provided that overlapping occurrences do not create significant local
clusters. Unlike in the fringe case, this low-clumping condition is not
automatic.

\begin{corollary}\label{cor:Non-fringe Poisson}
    Assume that $\mu$ is critical, aperiodic, and belongs to the domain of
    attraction of an $\alpha$-stable law for some $\alpha\in(1,2]$. Let $(T_k)_{k\geq 1}$ be a sequence of plane trees with  $|T_{k}|=k$.\ Let $L_k\coloneqq \#\{v\in T_k:k_v(T_k)=0\}$ be the number of leaves, and $C_k\coloneqq k-L_k$ be the number of internal vertices in $T_k$. Let $(k_n)$ be an integer sequence with $k_n \to \infty$, and set $\pi_n = \P(T_{k_n}\prec \T)$. Let $(M_n)$ be a cutoff sequence and assume that
    $$
      C_{k_n}=o(M_n),
      \quad
      L_{k_n}=o\bigl(a_{M_n}\bigr), \quad \sup_{\ell\ge1} \P\bigl(
    |\mathcal T|\ge \ell+1,\
    T_{k_n} \prec\mathcal T,\
    T_{k_n} \prec \T_{u_{\ell+1}(\mathcal T)}
    \bigr) = o\Big( \frac{n}{M_n}\pi_n^2\Big ).
    $$
Then:
\begin{enumerate}
\item[(i)]
    If $(n\pi_n)$ is bounded 
    then
    \[
    \dTV \left(
            \#\{u\in\mathcal T^n : T_{k_n}\prec \mathcal T^n_u\},
            \Po(n\pi_n)
            \right)
            \quad \mathop{\longrightarrow}_{n \rightarrow \infty} \quad 0.
    \]
    \item[(ii)] If $n\pi_n\to\infty$ 
    then
$$
\frac{
\#\{u\in\mathcal T^n:T_{k_n}\prec\mathcal T^n_u\}
}{
n\pi_n
}
\quad \mathop{\longrightarrow}^{(\P)}_{n \rightarrow \infty} \quad 1.
$$
    \end{enumerate}
\end{corollary}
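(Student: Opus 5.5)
The plan is to apply Theorem \ref{thm:PoissonApproximation} with $G_k(T)=\mathbbm{1}_{\{T_k\prec T\}}$. Assumption \ref{hyp:M} will come from Assumption \ref{hyp:S} via Corollary \ref{cor:SM}\textup{(i)}, and Assumption \ref{hyp:C} will come from the third hypothesis via the practical criterion recorded after its definition. Both conclusions \textup{(i)} and \textup{(ii)} are then exactly Theorem \ref{thm:PoissonApproximation}\textup{(i)} and \textup{(ii)}. We may assume $\pi_n>0$ for large $n$; otherwise the count is a.s.\ zero and the statements are trivial.

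\textbf{Step 1: structural decomposition.} Under $\P_k=\P(\,\cdot\mid T_k\prec\mathcal T)$, I will build the decomposition in Assumption \ref{hyp:S} with deterministic $C_k$ and $L_k$ equal to the numbers of internal vertices and leaves of $T_k$. The event $\{T_k\prec\mathcal T\}$ depends only on the outdegrees of the $C_k$ vertices of $\mathcal T$ corresponding to internal vertices of $T_k$: each such vertex must have outdegree equal to its outdegree in $T_k$. This event is measurable with respect to the offspring numbers of this finite set of vertices, which is a stopping-set type condition in the Ulam--Harris labelling. By the branching property, conditionally on the event the fringe subtrees of $\mathcal T$ rooted at the $L_k$ vertices corresponding to leaves of $T_k$ are i.i.d.\ copies of $\mathcal T$. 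Every vertex of $\mathcal T$ is either one of the $C_k$ internal-vertex positions or lies in exactly one of these fringe subtrees, so
\[
|\mathcal T|=C_k+\sum_{i=1}^{L_k}|\mathcal T^{(i)}| \qquad\text{under }\P_k.
\]
I expect this justification, namely that conditioning on the shape of a finite top part leaves independent $\mu$-Bienaymé trees at its leaves, to be the only step needing care. It is standard: compute $\P(T_k\prec\mathcal T,\ \mathcal T_{v_i}=t_i\ \forall i)=\prod_{\text{internal }u}\mu_{k_u(T_k)}\prod_i\P(\mathcal T=t_i)$ from the product formula for $\P_\mu$, then divide by $\P(T_k\prec\mathcal T)=\prod_{\text{internal }u}\mu_{k_u(T_k)}$.

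\textbf{Step 2: quantitative conditions.} Because $C_{k_n}$ and $L_{k_n}$ are deterministic, the hypotheses translate directly. Since $C_{k_n}=o(M_n)$, for every $\varepsilon>0$ we have $C_{k_n}\le\varepsilon M_n$ for $n$ large, so $a_n\P_{k_n}(C_{k_n}>\varepsilon M_n)=0$ eventually. Also $\E_{k_n}[L_{k_n}]/a_{M_n}=L_{k_n}/a_{M_n}\to0$. Hence Assumption \ref{hyp:S} holds along $(k_n)$ with respect to $(M_n)$, and Corollary \ref{cor:SM}\textup{(i)} yields Assumption \ref{hyp:M}.

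\textbf{Step 3: non-clustering.} The quantity in the third hypothesis is precisely $A_n(\ell)$ for this choice of $G_k$. The hypothesis therefore gives
\[
\sum_{\ell=1}^{M_n}A_n(\ell)\le M_n\sup_{\ell\ge1}A_n(\ell)=o(n\pi_n^2),
\]
which is Assumption \ref{hyp:C}. Theorem \ref{thm:PoissonApproximation} now applies directly.
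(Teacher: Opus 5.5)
Your proposal is correct and follows the paper's proof essentially verbatim. You verify Assumption \ref{hyp:S} with the deterministic choice of $C_k$ and $L_k$ (the internal vertices and leaves of $T_k$) via the branching property, deduce \ref{hyp:M} from Corollary \ref{cor:SM}\textup{(i)}, obtain \ref{hyp:C} by summing the uniform bound on $A_n(\ell)$ over $\ell\le M_n$, and then invoke Theorem \ref{thm:PoissonApproximation}; your product-formula justification of the grafting decomposition and your remark on indices with $\pi_n=0$ are details the paper leaves implicit.
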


Corollary \ref{cor:Non-fringe Poisson} should be viewed as a low-clumping
version of Poisson approximation for raw non-fringe counts. It extends the
scope of Theorem 1.6 of Cai and Devroye~\cite{CD17} to offspring distributions
in the stable domain of attraction, but only in regimes where overlapping
non-fringe occurrences are sufficiently rare. The complete unary case shows
that such an assumption is necessary in general: a single long unary chain
creates a whole cluster of overlapping non-fringe occurrences.

\begin{proof}[Proof of Corollary \ref{cor:Non-fringe Poisson}]
Set $G_k(T)=\mathbbm{1}_{\{T_k\prec T\}}$ and $\P_k(\,\cdot\,)\coloneqq\P(\,\cdot\mid G_k(\mathcal T)=1)$. We apply Theorem \ref{thm:PoissonApproximation}.

First, $(G_k)$ satisfies Assumption \ref{hyp:S}: conditionally on $T_{k_n}\prec\mathcal T$, the tree is obtained from $T_{k_n}$ by grafting independent Bienaym\'e trees onto the leaves of $T_{k_n}$. Thus, we have $|\T| = C_{k} + \sum_{i=1}^{L_k}|\mathcal T^{(i)}|$ under $\P_k$, where $\mathcal T^{(1)},\mathcal T^{(2)},\dots$ are i.i.d. copies of $\T$. Since $C_{k_n}$ and $L_{k_n}$ are deterministic,
$C_{k_n}=o(M_n)$ implies that, for every $\varepsilon>0$,
$a_n\P_{k_n}(C_{k_n}>\varepsilon M_n)=0$ 
for all sufficiently large $n$, while
$L_{k_n}/a_{M_n}\to0$. Hence $(G_k)$ satisfies Assumption
\ref{hyp:S} along $(k_n)$ with respect to $(M_n)$.

The third equality in the assumptions made on $(M_{n})$ is precisely Assumption \ref{hyp:C}, and the conclusion follows from  Theorem
\ref{thm:PoissonApproximation}.
\end{proof}

Another application of Theorem \ref{thm:PoissonApproximation} using Assumption \ref{hyp:S} concerns
Poisson approximations for vertices of unusually large prescribed outdegree. The short argument below illustrates the flexibility of our framework.

\begin{proposition}
\label{prop:largeoutdegree}
Assume that $\mu$ is critical and belongs to the domain of
    attraction of an $\alpha$-stable law for some $\alpha\in(1,2]$. Let $(d_n)$ be a sequence of positive integers such that
$d_n\to\infty$, and $\mu_{d_n}>0$ for all sufficiently large
$n$. Assume that there exists a cutoff sequence $(M_n)$ such that
$d_n=o(a_{M_n})$.
Set $
D_n
=
\#\{u\in\mathcal T^n:k_u(\mathcal T^n)=d_n\}$.
Then:
\begin{enumerate}
\item[(i)] If $(n\mu_{d_n})$ is bounded, then
$$
\dTV\left(D_n,\Po(n\mu_{d_n})\right)
\longrightarrow0.
$$
In particular, if $n\mu_{d_n}\to\lambda\in[0,\infty)$, then
$D_n$ converges in distribution to $\Po(\lambda)$.

\item[(ii)] If $n\mu_{d_n}\to\infty$, then
$$
\frac{D_n}{n\mu_{d_n}}
\quad\mathop{\longrightarrow}^{(\P)}_{n\to\infty}\quad
1.
$$
\end{enumerate}
\end{proposition}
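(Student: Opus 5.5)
We apply Theorem \ref{thm:PoissonApproximation} with $G_k(T)=\mathbbm{1}_{\{k_\varnothing(T)=d_k\}}$, indexed so that $G_{k_n}=G_n$ (take $k_n=n$). Then $\#\{u\in\mathcal T^n:G_n(\mathcal T^n_u)=1\}=D_n$ and $\pi_n=\P(k_\varnothing(\mathcal T)=d_n)=\mu_{d_n}$. So the statement reduces to checking Assumption \ref{hyp:M}, which we obtain from \ref{hyp:S} via Corollary \ref{cor:SM}(i), and Assumption \ref{hyp:C}, for the cutoff $(M_n)$ given in the hypothesis.

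For \ref{hyp:S}: conditionally on $k_\varnothing(\mathcal T)=d_n$, the root's $d_n$ fringe subtrees are i.i.d.\ copies of $\mathcal T$. Hence $|\mathcal T|=1+\sum_{i=1}^{d_n}|\mathcal T^{(i)}|$ under $\P_n$, with deterministic $C_n=1$ and $L_n=d_n$. Then $a_n\P_n(C_n>\varepsilon M_n)=0$ once $M_n>1/\varepsilon$, since $M_n\to\infty$. Also $\E_n[L_n]/a_{M_n}=d_n/a_{M_n}\to0$ by the hypothesis $d_n=o(a_{M_n})$. This gives \ref{hyp:S}, hence \ref{hyp:M}.

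The main step is \ref{hyp:C}. We need to bound
\[
A_n(\ell)=\P\bigl(|\mathcal T|\ge\ell+1,\ k_\varnothing(\mathcal T)=d_n,\ k_{u_{\ell+1}}(\mathcal T)=d_n\bigr).
\]
Use the DFS/Łukasiewicz coding: $\mathcal T$ is encoded by $(X_1,\dots,X_\zeta)$, and the event requires $X_1=d_n-1$, $X_{\ell+1}=d_n-1$ and $\zeta\ge\ell+1$. Dropping the last constraint and using independence of increments gives $A_n(\ell)\le\mu_{d_n}^2$ for every $\ell$. Therefore
\[
\sum_{\ell=1}^{M_n}A_n(\ell)\le M_n\mu_{d_n}^2=o(n\mu_{d_n}^2),
\]
since $M_n=o(n)$. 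This is exactly \ref{hyp:C}. The only subtlety is confirming that the event "the vertex $u_{\ell+1}$ has outdegree $d_n$" corresponds to $X_{\ell+1}=d_n-1$ on $\{\zeta\ge\ell+1\}$, which is immediate from the coding.

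With \ref{hyp:M} and \ref{hyp:C} verified, Theorem \ref{thm:PoissonApproximation}(i) gives part (i) when $(n\mu_{d_n})$ is bounded. If in addition $n\mu_{d_n}\to\lambda$, then $\dTV(\Po(n\mu_{d_n}),\Po(\lambda))\le|n\mu_{d_n}-\lambda|\to0$, so $D_n$ converges in distribution to $\Po(\lambda)$. Part (ii) is Theorem \ref{thm:PoissonApproximation}(ii). Positivity of $\pi_n=\mu_{d_n}$ for large $n$ is assumed, so the conditional laws are well defined. Once the reduction is set up, nothing here is really an obstacle; the non-clustering bound is the only step that needs an actual computation.
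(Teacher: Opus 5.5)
Your proposal is correct and follows essentially the same route as the paper. Both arguments use the star decomposition $C=1$, $L=d_n$ under the conditioned law to get Assumption \ref{hyp:S}, hence \ref{hyp:M} via Corollary \ref{cor:SM}(i), and both use the uniform bound $A_n(\ell)\le\mu_{d_n}^2$ for Assumption \ref{hyp:C}. The only difference is presentational: you read that bound off the \L ukasiewicz increments $X_1,X_{\ell+1}$, whereas the paper phrases it as exposing the grafted forest in depth-first order.
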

\begin{proof}

We apply Theorem \ref{thm:PoissonApproximation}. Let $S_d$ be the star with root degree $d$, that is, the tree
consisting of one root and $d$ leaves. Then $S_d\prec T$ is equivalent to
$k_\varnothing(T)=d$. Thus the above count is the non-fringe count associated
with
$$
G_d(T)=\mathbbm{1}_{\{k_\varnothing(T)=d\}}
      =\mathbbm{1}_{\{S_d\prec T\}},
$$
and
$$
\P(G_{d_n}(\mathcal T)=1)=\mu_{d_n}.
$$
Conditionally on $G_d(\mathcal T)=1$, the tree $\mathcal T$ is obtained from
the star $S_d$ by grafting $d$ independent Bienaym\'e trees onto its leaves. Thus we may take $C_d=1$ and $L_d=d$. Since $M_n\to\infty$, for every
$\varepsilon>0$,
$$
a_n\P_{d_n}(C_{d_n}>\varepsilon M_n)=0
$$
for all sufficiently large $n$, and
$$
\frac{\E_{d_n}[L_{d_n}]}{a_{M_n}}
=
\frac{d_n}{a_{M_n}}
\longrightarrow0.
$$
Hence $(G_d)$ satisfies Assumption \ref{hyp:S} along $(d_n)$ with respect
to $(M_n)$, and Corollary \ref{cor:SM}\textup{(i)} gives Assumption
\ref{hyp:M}.

It remains only to check the local clustering condition. Let $A_n(\ell)$ be
the two-point quantity associated with $G_{d_n}$. For $\ell\ge1$,
conditioning on $k_\varnothing(\mathcal T)=d_n$, the vertex
$u_{\ell+1}(\mathcal T)$, if it exists, lies in the forest grafted on the
$d_n$ children of the root. Exposing this forest in depth-first order, the
event that this vertex has outdegree $d_n$ contributes a further factor
$\mu_{d_n}$. Hence, uniformly in $\ell\ge1$,
$$
A_n(\ell)
=
\P\bigl(
|\mathcal T|\ge \ell+1,\
k_\varnothing(\mathcal T)=d_n,\
k_{u_{\ell+1}(\mathcal T)}(\mathcal T)=d_n
\bigr)
\le
\mu_{d_n}^2.
$$
Therefore Assumption \ref{hyp:C}  holds since $M_n=o(n)$. The two conclusions then
follow directly from Theorem \ref{thm:PoissonApproximation}.
\end{proof}

 \section{Proof of the Poisson approximation using conditioned random walks}
\label{sec:proofapprox}

In this section we prove Theorem \ref{thm:PoissonApproximation}. We first
reduce the marked fringe subtree count to a sum of local indicators along a
conditioned random walk, and then apply the Chen--Stein method to these
truncated indicators. The required random-walk estimates are those collected
in Section \ref{sec:toolbox}.

As previously mentioned, we implicitly assume in the proofs that $\mu$ is aperiodic.

\subsection{Strategy of the proof of Theorem \ref{thm:PoissonApproximation}}

We briefly describe the structure of the proof of Theorem \ref{thm:PoissonApproximation}. The starting point is to use the framework of Section \ref{sec:indicators}, which gives the equality
$$\# \{u \in \mathcal{T}^{n} : G_{k_{n}}( \mathcal{T}^{n}_{u})=1\}=\sum_{i=1}^{n} I^{(n)}_{i}(\mathbf{x}( \mathcal{T}^{n})),$$
where the indicators $I_i^{(n)}$ are those of Definition
\ref{def:indicators} applied with $G=G_{k_n}$. An important tool will be Lemma \ref{lem:RW}, which shows that  the law of $\sum_{i=1}^{n} I^{(n)}_{i}(\mathbf{x}( \mathcal{T}^{n}))$ can be expressed using a conditioned random walk, so that \eqref{eq:DTVthm} can be reformulated as
\begin{equation}
\label{eq:DTVthm2}
\dTV \left(
\sum_{i=1}^{n} I^{(n)}_{i}(X_{1}, \ldots,X_{n})
\textrm{ under } \P(\cdot\mid S_n=-1),
\Po(n\pi_n)
\right)
\longrightarrow0.
\end{equation}

Let $M_n$ be a sequence such that  $M_n\to\infty$ and $M_n=o(n)$ and such that 
 Assumptions \ref{hyp:M} and \ref{hyp:C} are satisfied  for $(G_k)$ along $(k_{n})$ with respect to $(M_{n})$.
\begin{enumerate}
\item[--] \emph{Step 1.}  We show that
$$\dTV \left( \sum_{i=1}^{n} I^{(n)}_{i}(X_{1}, \ldots,X_{n}) \textrm{ under } \P( \cdot \mid S_{n}=-1), \sum_{i=1}^{n} I^{(M_{n})}_{i}(X_{1}, \ldots,X_{n}) \textrm{ under } \P( \cdot \mid S_{n}=-1)\right)$$
tends to $0$ as $n \rightarrow\infty$.
 In other words, without loss of generality we may only consider fringe subtrees of size at most $M_{n}$. This is the content of Lemma \ref{lem:truncate} (i).
\item[--] \emph{Step 2.}  We apply the Chen--Stein Method described below in Section \ref{sec:chen_stein_framework} to get
$$
\dTV \left( \sum_{i=1}^{n} I^{(M_{n})}_{i}(X_{1}, \ldots,X_{n}) \textrm{ under } \P( \cdot \mid S_{n}=-1),  \Po(n q_{n})\right)  \quad \mathop{\longrightarrow}_{n \rightarrow \infty} \quad0
$$
with $q_n= \P(I^{(M_{n})}_{1}(X_{1}, \ldots,X_{n}) =1  \mid S_{n}=-1)$.
\item[--] \emph{Step 3.} We show that
$
\dTV(\Po(nq_n),\Po(n\pi_n))\to0$,
where 
$\pi_n=\P(G_{k_n}(\mathcal T)=1)$. This is the content of Lemma
\ref{lem:truncate}\textup{(ii)}.
\end{enumerate}

\subsection{The Chen--Stein method}
\label{sec:chen_stein_framework}
Set $[n]= \{1,2, \ldots,n\}$. The following result immediately follows from the proof of Theorem 1 in \cite{AGG89} (by combining Lemma 1 with (12), in the notation of that paper).
\begin{proposition}[Arratia, Goldstein \& Gordon]
\label{prop:AGG}
Let $(I^{n}_i)_{i \in [n]}$ be indicator random variables with
$q^{n}_i = \E[I^{n}_i]$, $\lambda^{n} = \sum_{i=1}^n q^n_i$,
$W^n = \sum_{i=1}^n I^n_i$.
For each $i \in [n]$, let $B^n_i \subseteq [n]$ be such that $i \in B^n_i$. Then there exists a function $f_n$ with $||f_n||_{\infty} \leq 1$  such that
\[
  \dTV\bigl(W^n,\, \Po(\lambda^{n})\bigr)
  \leq  2 (b^n_1 + b^n_2 + b^n_3)
\]
with $$ 
b^n_1= \textstyle\sum_{i=1}^n \sum_{j \in B^n_i} q^n_i\, q^n_j,  \quad  b^n_2 = \textstyle\sum_{i=1}^n \sum_{j \in B^n_i \setminus \{i\}} \E[I^n_i I^n_j], \quad 
  b^n_3 = \textstyle\sum_{i=1}^n \left| \E\bigl[(I^n_i - q^n_i)\, f_n\big( 1 +\sum_{j\in [n]\setminus B^n_i} I^n_j\big)\bigr] \right|.
  $$
\end{proposition}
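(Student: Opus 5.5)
The plan is to reproduce the Stein--Chen argument of \cite{AGG89}. The only change is that the term coming from indicators outside the neighbourhoods $B^n_i$ is not bounded by a mixing coefficient; it is kept in the form $b^n_3$. Fix $n$ and drop the superscript $n$. For $A\subset\mathbb Z_+$, let $f=f_A$ be the solution of the Poisson Stein equation
$$\lambda f(w+1)-wf(w)=\mathbbm{1}_{\{w\in A\}}-\Po(\lambda)(A),\qquad w\ge0,$$
with the usual convention $f(0)=0$. Lemma~1 of \cite{AGG89} (Barbour--Eagleson) gives the two standard bounds
$$\|f\|_\infty\le\min(1,1.4\lambda^{-1/2})\le1,\qquad \Delta f:=\sup_w|f(w+1)-f(w)|\le\min(1,\lambda^{-1})\le1 .$$
Take $A=\{k:\P(W=k)>\Po(\lambda)(\{k\})\}$. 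Then $\dTV(W,\Po(\lambda))=|\P(W\in A)-\Po(\lambda)(A)|$ for the convention $\sup_A$. I define $f_n:=f_A$ for this particular set; this is the function in the statement, with $\|f_n\|_\infty\le1$. If one uses the other convention for total variation (the sum over points), it is exactly twice the $\sup_A$ version, and this is what the factor $2$ absorbs.

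Substituting $W$ into the Stein equation and taking expectations gives
$$\P(W\in A)-\Po(\lambda)(A)=\sum_{i=1}^n\E\bigl[q_if(W+1)-I_if(W)\bigr].$$
Set $W_i=W-I_i$ and $V_i=\sum_{j\notin B_i}I_j$. Since $I_if(W)=I_if(W_i+1)$, each summand splits into three pieces:
$$q_i\E\bigl[f(W+1)-f(W_i+1)\bigr]+\E\bigl[(q_i-I_i)\bigl(f(W_i+1)-f(V_i+1)\bigr)\bigr]+\E\bigl[(q_i-I_i)f(V_i+1)\bigr].$$
I then bound the three pieces in turn.
\begin{itemize}
\item[--] The first piece has absolute value at most $\Delta f\,q_i\E[I_i]\le q_i^2$. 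This is the diagonal term $j=i$ of $b_1$, which is included because $i\in B_i$.
\item[--] For the second piece, note that $W_i-V_i=\sum_{j\in B_i\setminus\{i\}}I_j$. Telescoping gives $|f(W_i+1)-f(V_i+1)|\le\Delta f\sum_{j\in B_i\setminus\{i\}}I_j$. Bounding $|q_i-I_i|\le q_i+I_i$, the second piece is at most $\sum_{j\in B_i\setminus\{i\}}\bigl(q_iq_j+\E[I_iI_j]\bigr)$, which contributes to $b_1$ and $b_2$.
\item[--] The third piece is left untouched; in absolute value it is exactly the $i$-th summand of $b_3$, written with $f_n$.
\end{itemize}

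Summing over $i$ gives $\dTV\le b_1+b_2+b_3$ under the $\sup_A$ convention. This implies the stated bound with the factor $2$ under either convention.

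There is no real obstacle in this argument. The only point needing care is that the test function in $b_3$ must be the same single function for all $i$. This holds because $f_n=f_A$ is fixed once the optimal set $A$ is chosen, and it does not depend on $i$.
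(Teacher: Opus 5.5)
Your proof is correct and follows the route the paper indicates. The paper gives no argument of its own; it defers to the proof of Theorem~1 in \cite{AGG89}, combining its Lemma~1 with~(12). You do exactly that: you take the Stein solution $f_A$ for the extremal set $A$, use the bounds $\|f_A\|_\infty\le1$ and $\Delta f_A\le1$, apply the three-term decomposition, and keep the last term in its unconditioned form instead of bounding it by a mixing coefficient. The points needing care are all handled properly: only differences at arguments $\ge1$ are used, a single $f_A$ serves every $i$, and the factor $2$ covers either total-variation convention.
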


One often uses simpler bounds of $b^{n}_{3}$, but for the proof of Theorem \ref{thm:PoissonApproximation} it is important to use this more precise expression.

We apply this result in Step 2 described above with
$$B_i^n=\{j\in[n]: |i-j|_{\mathrm{cyc}}<M_n\} \quad \textrm{where} \quad |i-j|_{\mathrm{cyc}}=\min(|i-j|,n-|i-j|)$$
and show that $b^{n}_{1}, b^{n}_{2}, b^{n}_{3} \rightarrow0$. The proof of $b^{n}_{2} \rightarrow 0$ uses Assumption \ref{hyp:C}, while the arguments to show that $b^{n}_{1}, b^{n}_{3} \rightarrow0$ are based on quite general estimates that do not rely on the precise form of $G_{k}$ (Propositions \ref{prop:b1} and \ref{prop:b3bis}, respectively).

\subsection{Removing the bridge conditioning}

\begin{lemma}\label{lem:error_est}
Let $(m_{n})$ be a sequence of positive integers such that $m_{n}=o(n)$. Let $H : (\mathbb{Z}_{\geq -1})^{m_n} \to \{0,1\}$ be a function and let $(z_{n})$ be a sequence of positive real numbers such that $z_{n}=o(a_{n})$. Then there exists a constant~$C > 0$ (independent of $n$ and of $H$) such that for every $n \geq 1$ sufficiently large we have
\begin{equation}\label{eq:error_est}
  \sup_{|k| \leq z_{n}}\E\bigl[H(X_1,\ldots,X_{m_n}) \mid S_n = k\bigr] \leq C\, \E\bigl[H(X_1,\ldots,X_{m_n})\bigr].
\end{equation}
\end{lemma}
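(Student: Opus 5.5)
We condition on the first $m_n$ increments and use the Markov property at time $m_n$. Writing $\Psi_m(i)=\P(S_m=i)$ as before, we have for every $k$
$$
\E\bigl[H(X_1,\ldots,X_{m_n})\mid S_n=k\bigr]
=\frac{\E\bigl[H(X_1,\ldots,X_{m_n})\,\Psi_{n-m_n}(k-S_{m_n})\bigr]}{\Psi_n(k)} .
$$
It is then enough to show two things: the numerator weight satisfies $\sup_{x\in\Z}\Psi_{n-m_n}(x)\le C_1/a_{n}$, and the denominator satisfies $\inf_{|k|\le z_n}\Psi_n(k)\ge c_2/a_n$ for $n$ large. Granting these, the ratio is at most $(C_1/c_2)\,\E[H(X_1,\ldots,X_{m_n})]$, uniformly in $|k|\le z_n$ and in $H$. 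This is exactly \eqref{eq:error_est}.

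\textbf{Upper bound on the numerator.} The local limit theorem \eqref{eq:LL} gives $\sup_x a_m\Psi_m(x)\le \|d_\alpha\|_\infty+o(1)$. Since $d_\alpha$ is bounded, this yields $\sup_x\Psi_m(x)\le C/a_m$ for all $m\ge1$. Because $m_n=o(n)$, we have $n-m_n\ge n/2$ for large $n$. Regular variation of $(a_n)$ with index $1/\alpha$ then gives $a_{n-m_n}\ge c\,a_n$, hence $\sup_x\Psi_{n-m_n}(x)\le C_1/a_n$. This bound is uniform in $x$, so it holds for every value of $S_{m_n}$. We therefore never need to control the position $S_{m_n}$, and no assumption on $H$ is required. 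This is why the constant does not depend on $H$.

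\textbf{Lower bound on the denominator.} Again by \eqref{eq:LL}, $a_n\Psi_n(k)=d_\alpha(k/a_n)+o(1)$ uniformly in $k$. For $|k|\le z_n=o(a_n)$ we have $k/a_n\to0$, and $d_\alpha$ is continuous with $d_\alpha(0)>0$. Positivity holds because the density of a spectrally positive stable law with $\alpha\in(1,2]$ is positive on all of $\R$. Hence for large $n$, $\inf_{|k|\le z_n}a_n\Psi_n(k)\ge d_\alpha(0)/2$, which is the required lower bound with $c_2=d_\alpha(0)/2$.

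\textbf{Main point of care.} The only delicate step is the denominator: we need the uniformity of \eqref{eq:LL} together with continuity and positivity of $d_\alpha$ at $0$. The remaining steps are routine. In the periodic case, one restricts to lattice-compatible $k$ and applies the lattice version of the local limit theorem; the factor $d$ cancels between numerator and denominator, so the same argument goes through.
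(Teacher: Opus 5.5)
Your proposal is correct and follows essentially the same route as the paper. It uses the Markov property at time $m_n$, then the local limit theorem both for the uniform lower bound $\Psi_n(k)\ge c/a_n$ on $|k|\le z_n$ and, via regular variation of $(a_n)$, for the uniform upper bound $\sup_x\Psi_{n-m_n}(x)\le C/a_n$; your justification of $d_\alpha(0)>0$ only spells out a detail the paper leaves implicit.
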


\begin{proof}
Set $\Psi_{n}(i)=\Pr{S_{n}=i}$ for $n \geq 1$ and $i \in \Z$. By the Markov property applied at time $m_{n}$, we have for every $k \in \Z$
$$
  \E\bigl[H(X_1,\ldots,X_{m_n}) \mid S_n = k\bigr]
  =
\E\left[H(X_1,\ldots,X_{m_n})   \frac{\Psi_{n-m_{n}}(k-S_{m_{n}})}{\Psi_{n}(k)}\right].
  $$
By the local limit theorem \eqref{eq:LL}, there exists a constant $c>0$ such that $\Psi_{n}(k) \geq c/a_{n}$ for every $n \geq 1$ and $|k| \leq z_{n}$. In addition, again by the local limit theorem  \eqref{eq:LL}, since $(a_{n})$ is regularly varying,  there exists a constant $c'>0$ such that for every $i \in \Z$ we have $\Psi_{n-m_{n}}(i) \leq c'/a_{n}$, and the desired result readily follows.
\end{proof}

The following result immediately follows.
\begin{corollary}
\label{cor:pip}
 Let $(M_{n})$ be a sequence of integers such that $M_n =o(n)$. There exists a constant $C>0$ such that for every $n \geq 1$ we have
$$\P(I^{(M_{n})}_{1}(X_{1}, \ldots,X_{n})=1 \mid S_{n}=-1) \leq  C\P(G_{k_{n}}( \mathcal{T})=1)=C\P(I^{(\infty)}_{1}(X_{1}, \ldots,X_{\zeta})=1),
$$
where the indicator $I_1^{(M)}$ is the one of Definition
\ref{def:indicators} applied with $G=G_{k_n}$.
\end{corollary}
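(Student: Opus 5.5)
We apply Lemma \ref{lem:error_est} with $m_n=M_n$, $z_n=1$ (so $z_n=o(a_n)$ trivially) and $H=I^{(M_n)}_1$ built from $G=G_{k_n}$. This gives the first inequality; the equalities are then bookkeeping.

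The first step is to check that $H$ is admissible. As noted after Definition \ref{def:indicators}, $I^{(M_n)}_1(X_1,\ldots,X_n)$ depends only on $(X_1,\ldots,X_{M_n})$. So $H(x_1,\ldots,x_{M_n}):=I^{(M_n)}_1(x_1,\ldots,x_{M_n})$ is a $\{0,1\}$-valued function on $(\mathbb Z_{\ge -1})^{M_n}$, and $M_n=o(n)$. Lemma \ref{lem:error_est} with $k=-1$ then gives, for $n$ large,
$$
\P\bigl(I^{(M_n)}_1=1\mid S_n=-1\bigr)\le C\,\E\bigl[I^{(M_n)}_1(X_1,\ldots,X_{M_n})\bigr],
$$
with $C$ independent of $n$ and of $H$. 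For the finitely many small $n$ we can enlarge $C$, since all the probabilities involved are at most $1$.

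The second step bounds the unconditioned expectation. Under the unconditioned law, $I^{(M_n)}_1=1$ exactly when $\zeta\le M_n$ and $G_{k_n}(T(X_1,\ldots,X_\zeta))=1$. This is at most $\mathbbm 1_{\{\zeta<\infty,\,G_{k_n}(T(X_1,\ldots,X_\zeta))=1\}}=I^{(\infty)}_1(X_1,\ldots,X_\zeta)$. The walk is critical, hence recurrent, so $\zeta<\infty$ a.s.

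The third step identifies this last quantity with $\pi_n$. By the standard Łukasiewicz coding recalled in Section \ref{ssec:RW}, $(X_1,\ldots,X_\zeta)$ is the DFS sequence of a $\mu$-Bienaymé tree. Indeed, $\P(X_1,\ldots,X_\zeta=\mathbf x(t))=\prod_{u\in t}\mu_{k_u(t)}$, which is the law of $\mathcal T$. Hence $\E[I^{(\infty)}_1]=\P(G_{k_n}(\mathcal T)=1)$, which gives both the inequality and the equality in the statement.

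No step is a real obstacle, since Lemma \ref{lem:error_est} carries the analytic content. The only points needing care are that $H$ must depend on the first $M_n$ coordinates alone, and that the constant $C$ must be uniform in $n$.
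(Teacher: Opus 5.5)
Your proposal is correct and is essentially the paper's argument: the paper says the corollary follows immediately from Lemma \ref{lem:error_est}, namely by applying it with $m_n=M_n$ and $k=-1$ and then bounding $\E[I_1^{(M_n)}]\le\E[I_1^{(\infty)}]=\P(G_{k_n}(\mathcal T)=1)$, as you do. One small tidy-up for the finitely many small $n$: justify enlarging $C$ via $\P(A\mid S_n=-1)\le \P(A)/\P(S_n=-1)$ with $A=\{I_1^{(M_n)}=1\}$, since "probabilities are at most $1$" alone does not give a bound proportional to $\P(G_{k_n}(\mathcal T)=1)$.
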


 \subsection{Truncations}
 \label{ssec:truncate}
For $M, n \geq 1$ set
$$W^{M}_{n}= \sum_{i=1}^{n} I^{(M)}_{i}(X_{1}, \ldots,X_{n}),$$
  where the indicators $I_i^{(M)}$ are those of Definition
\ref{def:indicators} applied with $G=G_{k_n}$.

Recall the notation $\pi_n=\P(G_{k_n}(\mathcal T)=1)$ and set 
$$q_n=\P(I^{(M_n)}_1(X_1,\ldots,X_n)=1\mid S_n=-1).$$

\begin{lemma}
\label{lem:truncate}
Assume that \ref{hyp:M} holds for $(G_k)$ along $(k_{n})$ with respect to $(M_{n})$ and that $(n\P(G_{k_n}(\mathcal T)=1))$ is bounded. 
Then:
\begin{enumerate}
\item[(i)] We have
$
\dTV \left( W^{M_n}_{n} \textrm{ under } \P(\cdot\mid S_n=-1),
W^{n}_{n} \textrm{ under } \P(\cdot\mid S_n=-1)\right)\to0
$.
\item[(ii)] We have $nq_n-n\pi_n\to0$. In particular,
$$
\dTV(\Po(nq_n),\Po(n\pi_n))\to0.
$$
\end{enumerate}
\end{lemma}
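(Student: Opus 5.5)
For (i), I would bound the total variation distance by the probability that the two counts differ. Since $I_i^{(M_n)}\le I_i^{(n)}$ pointwise, the counts differ only if some index $i$ has $I_i^{(n)}=1$ but $I_i^{(M_n)}=0$. That is, the fringe subtree read from the $i$-th cyclic shift is marked and has size larger than $M_n$. A union bound over $i$, together with cyclic exchangeability of $(X_1,\ldots,X_n)$ under $\P(\cdot\mid S_n=-1)$, gives
$$
\dTV \le \P\bigl(W_n^n\neq W_n^{M_n}\mid S_n=-1\bigr)\le n\,\P\bigl(I_1^{(n)}=1,\ \zeta>M_n\mid S_n=-1\bigr).
$$
By \eqref{eq:shifts}, and exactly as in Corollary \ref{cor:cyclicshift}, the last probability equals $\P\bigl(G_{k_n}(\mathcal T^n_{U_n})=1,\ |\mathcal T^n_{U_n}|>M_n\bigr)$.

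Assumption \ref{hyp:M} controls this quantity. It equals $\P\bigl(G_{k_n}(\mathcal T^n_{U_n})=1\bigr)$ times the conditional probability that the size exceeds $M_n$. The first factor is $\sim\pi_n$ and the second is $o(1)$. Hence the bound is $n\pi_n\cdot o(1)\to0$, since $(n\pi_n)$ is bounded.

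For (ii), I would write the truncated mean as
$$
q_n=\P\bigl(I_1^{(n)}=1\mid S_n=-1\bigr)-\P\bigl(I_1^{(n)}=1,\ \zeta>M_n\mid S_n=-1\bigr).
$$
The first term is $\P\bigl(G_{k_n}(\mathcal T^n_{U_n})=1\bigr)=\pi_n(1+o(1))$ by the first half of \ref{hyp:M}. The second term is $o(\pi_n)$ by the argument in (i). Thus $nq_n-n\pi_n=n\pi_n\cdot o(1)\to0$, again using boundedness of $n\pi_n$.

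The Poisson comparison then follows from the standard bound $\dTV(\Po(a),\Po(b))\le|a-b|$, which can be seen by coupling $\Po(\max(a,b))$ as $\Po(\min(a,b))$ plus an independent $\Po(|a-b|)$.

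The only delicate point is the identification of $\P(I_1^{(n)}=1,\ \zeta>M_n\mid S_n=-1)$ with the law of the fringe subtree at a uniform vertex. Under the bridge $\zeta\le n$ always holds, since $S_n=-1$, so the indicator $I_1^{(n)}$ is exactly $G_{k_n}$ of the first encoded tree. That tree is distributed as $\mathcal T^n_{U_n}$ by \eqref{eq:shifts} and \eqref{eq:fringe}. Once this is in place, everything reduces directly to the two parts of Assumption \ref{hyp:M}.
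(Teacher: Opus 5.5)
Your proposal is correct and follows the paper's proof essentially verbatim. The only cosmetic difference is in (i): the paper bounds $\dTV$ by $\E[W_n^n-W_n^{M_n}\mid S_n=-1]$ (using $0\le W_n^{M_n}\le W_n^n$), whereas you use the coupling event and a union bound; both give $n\P\bigl(G_{k_n}(\mathcal T^n_{U_n})=1,\ |\mathcal T^n_{U_n}|>M_n\bigr)$. Part (ii) is the same decomposition of $q_n-\pi_n$ into two $o(\pi_n)$ terms as in the paper.
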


\begin{proof}
We first prove \textup{(i)}. By Lemma \ref{lem:RW}, $W_n^n$ under $\P(\cdot\mid S_n=-1)$ has the same law as the number of vertices $u\in\mathcal T^n$ such that $G_{k_n}(\mathcal T^n_u)=1$, and $W_n^{M_n}$ has the same law as the same count restricted to vertices satisfying $|\mathcal T^n_u|\le M_n$. Since $0\le W_n^{M_n}\le W_n^n$, we have
$$\dTV(W_n^{M_n},W_n^n)\le\E[W_n^n-W_n^{M_n}\mid S_n=-1]=n\P(G_{k_n}(\mathcal T^n_{U_n})=1,\ |\mathcal T^n_{U_n}|>M_n),$$ where $U_n$ is a uniform vertex of $\mathcal T^n$, independent of $\mathcal T^n$. By Assumption \ref{hyp:M}, this last probability is $o(\pi_n)$. Since $(n\pi_n)$ is bounded, $\dTV(W_n^{M_n},W_n^n)=o(n\pi_n)=o(1)$. This proves \textup{(i)}.

For \textup{(ii)}, set $q_n=\P(I_1^{(M_n)}(X_1,\ldots,X_n)=1\mid S_n=-1)$. Again by Lemma \ref{lem:RW} and the uniform-vertex representation, $q_n=\P(G_{k_n}(\mathcal T^n_{U_n})=1,\ |\mathcal T^n_{U_n}|\le M_n)$. Therefore $$q_n-\pi_n=(\P(G_{k_n}(\mathcal T^n_{U_n})=1)-\pi_n)-\P(G_{k_n}(\mathcal T^n_{U_n})=1,\ |\mathcal T^n_{U_n}|>M_n).$$ By Assumption \ref{hyp:M}, both terms on the right-hand side are $o(\pi_n)$, and so $q_n=\pi_n+o(\pi_n)$. Since $(n\pi_n)$ is bounded, $nq_n-n\pi_n=o(n\pi_n)=o(1)$. Finally, $\dTV(\Po(nq_n),\Po(n\pi_n))\le |nq_n-n\pi_n|\to0$, which proves \textup{(ii)}.
\end{proof}

\subsection{Estimating the error terms in the Chen--Stein method}
\label{ssec:error}

For every sequence $\mathbf{x}$ and $n,M\ge1$, we denote by $I_i^{(M)}(\mathbf{x})$ the quantity introduced in Definition \ref{def:indicators}, applied to the $\{0,1\}$-valued function $G_{k_n}$. Throughout this subsection, we assume that \ref{hyp:M} holds for $(G_k)$ along $(k_n)$ with respect to $(M_n)$.

We shall apply Proposition \ref{prop:AGG} for the indicators $(I^{(M_{n})}_{i}(X_{1}, \ldots,X_{n}))_{1 \leq i \leq  n}$ under the conditional probability $\P(\cdot \mid S_{n}=-1)$ with
$$B_i^n=\{j\in[n]: |i-j|_{\mathrm{cyc}}<M_n\} \quad \textrm{where} \quad |i-j|_{\mathrm{cyc}}=\min(|i-j|,n-|i-j|),
$$
  and where the indicators $I_i^{(M_{n})}$ are those of Definition
\ref{def:indicators} applied with $G=G_{k_n}$.
We keep the notation 
\begin{equation}
\label{eq:defpi}
\pi_{n}=\P(G_{k_{n}}( \mathcal{T})=1)=\P(I^{(\infty)}_{1}(X_{1}, \ldots,X_{\zeta})=1)
\textrm{ and }  q_{n}=\P(I^{(M_{n})}_{1}(X_{1}, \ldots,X_{n})=1 \mid S_{n}=-1),
\end{equation}
where we recall that $\zeta=\zeta((X_{i})_{i \geq 1})$ is  the first hitting time of $-1$ by $(S_{k})_{k \geq 1}$.

To simplify notation, we shall write
$$I^{(M_{n})}_{i}=I^{(M_{n})}_{i}(X_{1}, \ldots,X_{n}),$$
which only depends on $X_{i}, \ldots,X_{i+M_{n}-1}$.
Also, since by cyclic invariance $\P(I^{(M_{n})}_{i}=1 \mid S_{n}=-1)$ does not depend on $i \in [n]$, we denote by $q_{n}$ this quantity. 

In this context, the error terms $b^{n}_{1}, b^{n}_{2}, b^{n }_{3}$ of Proposition \ref{prop:AGG} can be written as follows.
$$b_{1}^{n} = \sum_{i=1}^n \sum_{j \in B^n_i} q_{n}^{2}, \qquad  b_{2}^{n}=\sum_{i=1}^n \sum_{j \in B^n_i \setminus \{i\}} \E\left[I^{(M_{n})}_i I^{(M_{n})}_j \mid S_{n}=-1 \right]$$
and
\begin{eqnarray*}
b^{n}_{3}&=&\sum_{i=1}^n \left| \E\bigl[(I^{(M_{n})}_i - q_{n})\, f_{n}\big( 1 +\sum_{j\in [n]\setminus B^n_i} I^{(M_{n})}_j\big) \mid S_{n}=-1\bigr]\right|\\
&=& n\left| \E\bigl[(I^{(M_{n})}_1 - q_{n})\, f_{n}\big( 1 +\sum_{j=M_{n}+1}^{n-M_{n}+1} I^{(M_{n})}_j\big) \mid S_{n}=-1\bigr]\right|.\end{eqnarray*}
for a function $f_{n}$ with $||f_{n}||_{\infty} \leq 1$, where we have used exchangeability for the second equality.

We will need the following technical result.

\begin{lemma}
\label{lem:pnqn}
Assume \ref{hyp:M} holds for $(G_k)$ along $(k_{n})$ with respect to $(M_{n})$. Set $p_n = \P(I_1^{(M_n)}=1)$ and $r_n = \sqrt{a_{M_n} a_n}$. Then:
\begin{enumerate}
\item[(i)] $\P(I_1^{(M_n)} = 1, |S_{M_n}| > r_n) = o(p_n)$.
\item[(ii)] $p_n \sim q_n \sim \pi_n$.
\end{enumerate}
\end{lemma}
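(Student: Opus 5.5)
The plan is to express all three quantities $p_n$, $q_n$, $\pi_n$ through the unconditioned Bienaymé tree, using the first hitting time $\zeta$. Under the unconditioned walk, $I_1^{(M_n)}=1$ means exactly that $\zeta\le M_n$ and $G_{k_n}(\mathcal T)=1$, where $\mathcal T=T(X_1,\ldots,X_\zeta)$. Hence
$$p_n=\P(G_{k_n}(\mathcal T)=1,\ |\mathcal T|\le M_n).$$
Likewise, the computation in the proof of Corollary \ref{cor:cyclicshift} (the Markov property at time $\zeta$) gives
$$q_n=\frac{1}{\Psi_n(-1)}\E\bigl[\mathbbm{1}_{\{G_{k_n}(\mathcal T)=1,\,|\mathcal T|\le M_n\}}\Psi_{n-|\mathcal T|}(0)\bigr].$$

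For (ii), note that the ratio $\Psi_{n-m}(0)/\Psi_n(-1)\to1$ uniformly in $m\le M_n=o(n)$. This follows from the local limit theorem \eqref{eq:LL}, since $a_{n-m}\sim a_n$ and $d_\alpha$ is continuous and positive at $0$. Therefore $q_n\sim p_n$.

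It also follows that $q_n=\P(G_{k_n}(\mathcal T^n_{U_n})=1,\ |\mathcal T^n_{U_n}|\le M_n)$. Assumption \ref{hyp:M} then gives $q_n\sim\P(G_{k_n}(\mathcal T^n_{U_n})=1)\sim\pi_n$, exactly as in the proof of Lemma \ref{lem:truncate}(ii). This yields $p_n\sim q_n\sim\pi_n$. In particular, $p_n\ge c\,\pi_n$ for large $n$, which will be used in (i).

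For (i), on the event $\{I_1^{(M_n)}=1\}$ we have $\zeta\le M_n$. By the strong Markov property at $\zeta$, $S_{M_n}=-1+S'_{M_n-\zeta}$, where $S'$ is an independent copy of the walk. Thus
$$\P(I_1^{(M_n)}=1,\ |S_{M_n}|>r_n)\le p_n\sup_{m\le M_n}\P(|S'_m|>r_n-1).$$
It remains to show that this supremum tends to $0$.

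The tightness of $(S_m/a_m)$ is the key input: it follows from the convergence in distribution $S_m/a_m\to Y_\alpha$, or more uniformly from \eqref{eq:cvlaw} applied at scale $M_n$. This gives $\sup_{m\le M_n}\P(|S_m|>K a_{M_n})\to0$ once $K\to\infty$. Since $r_n/a_{M_n}=\sqrt{a_n/a_{M_n}}\to\infty$ (because $M_n=o(n)$ and $(a_n)$ is regularly varying with positive index), the supremum indeed vanishes.

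To justify the uniformity over $m\le M_n$, I would use \eqref{eq:cvlaw} together with the fact that $\sup_{t\le1}|Y_\alpha(t)|$ is finite almost surely. This gives tightness of $\max_{m\le M_n}|S_m|/a_{M_n}$, which is exactly the bound needed. This uniformity is the only delicate point; everything else is bookkeeping with the Markov property and the local limit theorem.
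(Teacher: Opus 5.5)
Your proposal is correct. Part (i) is essentially the paper's argument. You apply the Markov property at $\zeta$ to get the bound $p_n\sup_{m\le M_n}\P(|S_m|>r_n-1)$. Then tightness of $\max_{m\le M_n}|S_m|/a_{M_n}$ from \eqref{eq:cvlaw}, together with $r_n/a_{M_n}\to\infty$, makes the supremum vanish.

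Where you differ is in the proof of $p_n\sim q_n$. The paper applies the Markov property at the fixed time $M_n$, writing $q_n=\Psi_n(-1)^{-1}\E[I_1^{(M_n)}\Psi_{n-M_n}(-1-S_{M_n})]$. The bridge ratio is then close to $1$ only on $\{|S_{M_n}|\le r_n\}$, so the paper must prove (i) first and use it to discard the complementary event.

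You instead decompose at the hitting time $\zeta$, as in Corollary \ref{cor:cyclicshift}. This gives $q_n=\Psi_n(-1)^{-1}\E[\mathbbm{1}_{\{G_{k_n}(\mathcal T)=1,\,|\mathcal T|\le M_n\}}\Psi_{n-|\mathcal T|}(0)]$. Here the endpoint $0$ is deterministic and $|\mathcal T|\le M_n=o(n)$ on the event. Hence the local limit theorem gives the ratio $1+o(1)$ uniformly, with no truncation of $S_{M_n}$.

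Your route makes (ii) logically independent of (i) and is slightly shorter. The paper's fixed-time decomposition has the advantage of matching the block structure $(X_1,\ldots,X_{M_n})$ used later in the Chen--Stein estimates, namely Lemma \ref{lem:1} and the separated-pair estimate in Lemma \ref{lem:extreme_means}. There the event $\{|S_{M_n}|>r_n\}$ has to be controlled anyway.

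One minor remark: the lower bound $p_n\ge c\,\pi_n$ you record is not needed in (i). Your bound there is already of the form $p_n\cdot o(1)$.
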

\begin{proof}
Let us first show that $p_n\sim q_n$.
Write $\Psi_m(i)=\P(S_m=i)$. By the Markov property at time $M_n$,
$$
q_n
=
\frac{1}{\Psi_n(-1)}
\E\left[
I_1^{(M_n)}
\Psi_{n-M_n}(-1-S_{M_n})
\right].
$$
Since $a_n$ is regularly varying with positive index $1/\alpha$ and $M_n=o(n)$, Potter bounds \eqref{eq: potter_bound} imply that $a_{M_n}/a_n\to0$. Hence $r_n/a_{M_n}\to\infty$ and $r_n/a_n\to0$. On the event $\{|S_{M_n}|\le r_n\}$, the local limit theorem gives $\Psi_{n-M_n}(-1-S_{M_n})/\Psi_n(-1)\to1$ uniformly. 
It remains to control the contribution of $\{|S_{M_n}|>r_n\}$. Since $I_1^{(M_n)}=0$ on $\{\zeta>M_n\}$ and, on $\{\zeta=j\}$, the event $\{I_1^{(M_n)}=1\}$ is measurable with respect to $(X_1,\ldots,X_j)$, the Markov property at time $\zeta$ gives
$$
\P(I_1^{(M_n)}=1,\ |S_{M_n}|>r_n)
\le
\sum_{j=1}^{M_n}
\E\left[I_1^{(M_n)}\mathbbm{1}_{\{\zeta=j\}}\right]
\P(|S_{M_n-j}|>r_n-1).
$$
Indeed,
$$
\max_{0\le j\le M_n}
\P(|S_{M_n-j}|>r_n-1)
\le
\P\left(\max_{0\le i\le M_n}|S_i|>r_n-1\right)=o(1),
$$
by tightness of
$(S_{\lfloor M_n t\rfloor}/a_{M_n}:0\le t\le1)$ and
$r_n/a_{M_n}\to\infty$. Since
$\sum_{j=1}^{M_n}\E[I_1^{(M_n)}\mathbbm{1}_{\{\zeta=j\}}]=p_n$,
the preceding display is $o(p_n)$. This establishes \textup{(i)}.
Moreover, the local limit theorem gives
$\Psi_{n-M_n}(\cdot)/\Psi_n(-1)\le C$ uniformly. Hence the contribution of
$\{|S_{M_n}|>r_n\}$ to the expression for $q_n$ is $o(p_n)$, and therefore
$q_n=p_n+o(p_n)$.

Let $U_n$ be a uniform vertex of $\mathcal T^n$, independent of
$\mathcal T^n$. By the cyclic-shift representation,
$$
q_n=\P\bigl(
G_{k_n}(\mathcal T^n_{U_n})=1,\
|\mathcal T^n_{U_n}|\le M_n
\bigr).
$$
Assumption \ref{hyp:M} therefore gives $q_n\sim\pi_n$.
Together with $p_n\sim q_n$, this proves \textup{(ii)}.
\end{proof}

\subsubsection{Bounding $b^{n}_1$}
\label{sec:b1_common}

\begin{proposition}\label{prop:b1} Assume that  $(n \pi_{n})$ is bounded. Then $b_{1}^{n} \rightarrow 0$.
\end{proposition}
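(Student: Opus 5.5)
The bound on $b^n_1$ is a direct counting argument. The terms in $b^n_1$ do not depend on $i$ or $j$ once we condition on $S_n=-1$, so it suffices to count the pairs $(i,j)$ and to control $q_n$.

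First, the neighborhood $B^n_i=\{j\in[n]:|i-j|_{\mathrm{cyc}}<M_n\}$ has cardinality at most $2M_n-1$. Indeed, for each distance $d\in\{1,\ldots,M_n-1\}$ there are at most two indices $j$ with $|i-j|_{\mathrm{cyc}}=d$, and one more index for $d=0$. Hence
$$
b^n_1=\sum_{i=1}^n\sum_{j\in B^n_i}q_n^2\le n(2M_n-1)q_n^2\le 2M_n\,n\,q_n^2 .
$$

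Second, we control $q_n$ using Lemma \ref{lem:pnqn}(ii), which gives $q_n\sim\pi_n$ under Assumption \ref{hyp:M}. This is the assumption in force throughout this subsection. Alternatively, Corollary \ref{cor:pip} gives $q_n\le C\pi_n$ directly, and this already suffices. Either way there is a constant $C>0$ with $q_n\le C\pi_n$ for all large $n$. Therefore
$$
b^n_1\le 2C^2\,M_n\,n\,\pi_n^2=2C^2\,(n\pi_n)^2\,\frac{M_n}{n}.
$$

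Finally, $(n\pi_n)$ is bounded by hypothesis and $M_n=o(n)$ because $(M_n)$ is a cutoff sequence. Hence the right-hand side tends to $0$, so $b^n_1\to0$.

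There is no genuine obstacle. The only point needing care is to use the conditional quantity $q_n$ rather than $\pi_n$, and this is handled by the comparison from Lemma \ref{lem:pnqn} or Corollary \ref{cor:pip}.
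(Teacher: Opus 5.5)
Your proposal is correct and is essentially the paper's proof. Both arguments bound $|B_i^n|$ by $2M_n$, use Corollary \ref{cor:pip} to get $q_n\le C\pi_n$, and conclude that $b_1^n\le C(n\pi_n)^2 M_n/n\to0$ because $M_n=o(n)$.
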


\begin{proof}
Using Corollary \ref{cor:pip}, write
  $$
  \sum_{i=1}^n \sum_{j \in B^n_i} q_{n}^{2}
  \leq n \cdot 2M_n \cdot q_n^2  \leq Cn \cdot M_n \cdot \pi_n^2   = \Obig\!\left(\frac{M_n}{n}\right)  \to 0,
$$
since $M_n = o(n)$, which is the desired result.
\end{proof}

\subsubsection{Bounding $b^{n}_2$}
\label{sec:b2_common}

\begin{lemma}
\label{lem:boundb2}
Assume \ref{hyp:C}  for $(G_{k})$ along $(k_{n})$ with respect to $(M_{n})$ and that $(n \pi_{n})$ is bounded. Then $b^{n}_{2}  \rightarrow  0$.
\end{lemma}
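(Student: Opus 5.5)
By cyclic exchangeability under $\P(\cdot\mid S_n=-1)$, I will first reduce $b_2^n$ to a sum over the single index $i=1$. This gives
$$
b_2^n = n\sum_{\ell=1}^{M_n-1}\Bigl(\E\bigl[I_1^{(M_n)}I_{1+\ell}^{(M_n)}\mid S_n=-1\bigr]+\E\bigl[I_1^{(M_n)}I_{1+n-\ell}^{(M_n)}\mid S_n=-1\bigr]\Bigr).
$$
By cyclic invariance, the term with $j=1+n-\ell$ equals $\E[I_{1+\ell}^{(M_n)}I_1^{(M_n)}\mid S_n=-1]$ after a shift by $\ell$. So $b_2^n\le 2n\sum_{\ell=1}^{M_n-1}\E[I_1^{(M_n)}I_{1+\ell}^{(M_n)}\mid S_n=-1]$.

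Next I will interpret the product $I_1^{(M_n)}I_{1+\ell}^{(M_n)}$. On $\{\zeta\le M_n\}$, the first block $(X_1,\dots,X_{\zeta})$ encodes a tree $T$ with $|T|\le M_n$, and $G_{k_n}(T)=1$. Consider the shifted sequence starting at index $1+\ell$. If $\ell<\zeta$, then by \eqref{eq:fringe} the first excursion of the shifted sequence is exactly the fringe subtree of $T$ at $u_{\ell+1}(T)$. In that case the event equals $\{|T|\ge \ell+1,\ G_{k_n}(T)=1,\ G_{k_n}(T_{u_{\ell+1}(T)})=1\}$, with a size constraint that I can drop. If $\ell\ge\zeta$, then the index $1+\ell$ starts after the first tree, and the two indicators concern disjoint blocks of increments: $I_1^{(M_n)}$ depends on $X_1,\dots,X_\zeta$, and $I_{1+\ell}^{(M_n)}$ depends on $X_{1+\ell},\dots,X_{\ell+M_n}$. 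In both cases the event depends only on $X_1,\dots,X_{2M_n}$, and $2M_n=o(n)$. Lemma \ref{lem:error_est} (with $m_n=2M_n$, $k=-1$) therefore removes the conditioning at the cost of a constant:
$$
\E\bigl[I_1^{(M_n)}I_{1+\ell}^{(M_n)}\mid S_n=-1\bigr]\le C\,\E\bigl[I_1^{(M_n)}I_{1+\ell}^{(M_n)}\bigr].
$$

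For the unconditioned walk I will split according to whether $\zeta>\ell$ or $\zeta\le\ell$. On $\{\zeta>\ell\}$ the expectation is at most $A_n(\ell)$, by the identification above applied to the Bienaymé tree encoded by $(X_1,\dots,X_\zeta)$. On $\{\zeta\le\ell\}$, the variables $X_{1+\ell},\dots$ are independent of $(X_1,\dots,X_\ell)$ and hence of $I_1^{(M_n)}\mathbbm 1_{\{\zeta\le \ell\}}$. The contribution is therefore at most $p_n\cdot\P(I_1^{(M_n)}=1)=p_n^2$, and $p_n\sim\pi_n$ by Lemma \ref{lem:pnqn}(ii). Summing over $\ell<M_n$ gives
$$
b_2^n\le 2Cn\sum_{\ell=1}^{M_n}A_n(\ell)+2Cn M_n p_n^2 .
$$
Assumption \ref{hyp:C} makes the first term $o(n^2\pi_n^2)=o(1)$, since $(n\pi_n)$ is bounded. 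The second term is $O(M_n/n)\to0$, as in Proposition \ref{prop:b1}.

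I expect the main obstacle to be the careful identification in the regime $\ell<\zeta$. There I must check that the shifted indicator, which a priori reads a cyclic window of the full sequence, really coincides with $G_{k_n}$ evaluated at the fringe subtree at $u_{\ell+1}$. This holds because a fringe subtree of a vertex in $T$ ends before $\zeta$, so its excursion closes inside the first block. The measurability of the joint event with respect to the first $2M_n$ increments, which is needed for Lemma \ref{lem:error_est}, also needs checking, and both points require $n>2M_n$ so that the cyclic wraparound is never involved.
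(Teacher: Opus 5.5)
Your proposal is correct and follows essentially the same route as the paper. Cyclic invariance together with Lemma \ref{lem:error_est}, applied to a function of at most $2M_n=o(n)$ increments, removes the bridge conditioning; splitting according to whether the $(\ell+1)$-th vertex lies in the first tree (your $\{\zeta>\ell\}$ versus $\{\zeta\le\ell\}$) then gives $\E[I_1^{(M_n)}I_{1+\ell}^{(M_n)}]\le A_n(\ell)+\pi_n^2$, and hence $b_2^n\le CnM_n\pi_n^2+o(n^2\pi_n^2)$. The only cosmetic differences are that you keep the truncated indicators and the explicit factor $2$, whereas the paper passes to $I^{(\infty)}$ and absorbs constants into $C$; also, $p_n\le\pi_n$ holds trivially, so you do not need Lemma \ref{lem:pnqn} at that step.
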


\begin{proof}
By cyclic invariance and Lemma \ref{lem:error_est}, applied to the product
$I_1^{(M_n)}I_{1+\ell}^{(M_n)}$, which depends on at most $2M_n=o(n)$
increments for $1\leq \ell\leq M_n-1$, there exists $C>0$ such that
$$
b_2^n
\leq
C n\sum_{\ell=1}^{M_n-1}
\E\left[I_1^{(M_n)}I_{1+\ell}^{(M_n)}\right].
$$
It remains to bound the unconditioned expectation. Let $I_i^{(\infty)}$ denote the corresponding untruncated indicator. Since $I_i^{(M_n)}\leq I_i^{(\infty)}$, it is enough to estimate
$\P(I_1^{(\infty)}=1,\ I_{1+\ell}^{(\infty)}=1)$.

Consider the infinite forest encoded by $(X_i)_{i\ge1}$, and let
$(v_i)_{i\ge1}$ be its vertices listed in depth-first order. Let
$\mathcal T$ be the first tree of this forest, rooted at $v_1$. Fix $\ell\ge1$. If $v_{1+\ell}$ does not belong to $\mathcal T$, then the occurrence at $v_1$ and the occurrence at $v_{1+\ell}$ lie in two independent Bienaymé trees. This contribution is at most $\pi_n^2$. If $v_{1+\ell}$ belongs to $\mathcal T$, then $v_{1+\ell}=u_{\ell+1}(\mathcal T)$, and the event
$\{I_1^{(\infty)}=1,\ I_{1+\ell}^{(\infty)}=1\}$ implies
$|\mathcal T|\ge \ell+1$, $G_{k_n}(\mathcal T)=1$, and
$G_{k_n}(\mathcal T_{u_{\ell+1}(\mathcal T)})=1$. Thus, by definition of $A_n(\ell)$,
$$
\E\left[I_1^{(M_n)}I_{1+\ell}^{(M_n)}\right]
\leq
\pi_n^2+A_n(\ell).
$$
Therefore, using Assumption \ref{hyp:C},
$$
b_2^n
\leq
C \left(n M_n\pi_n^2+n\sum_{\ell=1}^{M_{n}}A_n(\ell)\right)
=
C n M_n\pi_n^2
+
\,o\left({n}^{2}\pi_n^2\right).
$$
Since $M_n=o(n)$ and $(n\pi_n)$ is bounded, both terms go to $0$ as $n \rightarrow\infty$. Hence $b_2^n\to0$.
\end{proof}

\subsubsection{Bounding $b^{n}_3$}\label{sec:b3_common}

\begin{proposition}\label{prop:b3bis}
Let $f_{n}$ be a function with $||f_{n}||_{\infty} \leq 1$. Assume that $(n \pi_{n})$ is bounded. Then $b^{n}_{3} \rightarrow 0$.
\end{proposition}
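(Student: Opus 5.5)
We need to bound
$$b_3^n=n\bigl|\E[(I_1-q_n)F\mid S_n=-1]\bigr|,$$
where $I_1=I_1^{(M_n)}$ and $F=f_n\bigl(1+\sum_{j=M_n+1}^{n-M_n+1}I_j\bigr)$. Since $q_n=\E[I_1\mid S_n=-1]$, this is a conditional covariance. The plan is to show it is $o(\pi_n)$; then $b_3^n=o(n\pi_n)=o(1)$.

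\textbf{Step 1: separate the local window from the rest.} $I_1$ depends only on $X_1,\dots,X_{M_n}$. The indicators $I_j$ with $M_n+1\le j\le n-M_n+1$ depend on increments with indices in $\{M_n+1,\dots,n\}$ together with a wrap-around part of length at most $M_n$. First make $F$ independent of the window $\{1,\dots,M_n\}$. Replace each $I_j$ whose block wraps past $n$ by the same indicator computed on a modified sequence, or simply drop those indicators. The dropped indicators have indices in $[n-2M_n,n-M_n+1]$, so the change in $F$ is at most the number of marked occurrences among $O(M_n)$ indicators. Its expectation, both under $\P(\cdot\mid S_n=-1)$ and under $\P(\cdot\mid I_1=1,S_n=-1)$, is $O(M_n\pi_n)+O(\sum_\ell A_n(\ell)/\pi_n)$-type terms. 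Using Lemma \ref{lem:error_est} and the bound on two-point probabilities already used in Lemma \ref{lem:boundb2}, this is $o(1)$. Multiplied by $q_n\sim\pi_n$, it gives $o(\pi_n)$. I would in fact rather replace $B_i^n$ by a slightly larger neighbourhood ($2M_n$), so that $F$ becomes a function $\Phi$ of $(X_{M_n+1},\dots,X_n)$ only. The $b_1$ and $b_2$ bounds are unaffected, since $2M_n=o(n)$.

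\textbf{Step 2: bridge removal.} With $F=\Phi(X_{M_n+1},\dots,X_n)$, condition on the first $M_n$ increments. By the Markov property,
$$\E[I_1F\mid S_n=-1]=\E\bigl[I_1\,g_n(S_{M_n})\bigr]\frac{1}{\Psi_n(-1)},\qquad g_n(s)=\E\bigl[\Phi(X'_1,\dots,X'_{n-M_n})\mathbbm 1_{\{S'_{n-M_n}=-1-s\}}\bigr].$$
Also $\E[F\mid S_n=-1]=\E[g_n(S_{M_n})]/\Psi_n(-1)$. Write $h_n(s)=g_n(s)/\Psi_{n-M_n}(-1-s)$, the conditional expectation of $F$ for a bridge of length $n-M_n$ ending at $-1-s$. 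The key estimate, which is the main obstacle, is
$$\sup_{|s|\le r_n}|h_n(s)-h_n(0)|\to0,\qquad r_n=\sqrt{a_{M_n}a_n}.$$

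To prove it, I would use absolute continuity of the bridge with respect to the free walk on an initial segment of length $n-M_n-m'$ with $m'=\lfloor\varepsilon n\rfloor$. There the Radon--Nikodym density is $\Psi_{m'}(-1-s-S'_{n-M_n-m'})/\Psi_{n-M_n}(-1-s)$. By the local limit theorem \eqref{eq:LL}, with uniform continuity of $d_\alpha$ and $r_n=o(a_n)$, this ratio changes by $o(1)$ in $L^1$ when $s$ moves in $[-r_n,r_n]$. The contribution of $F$'s dependence on the last $\varepsilon n$ increments is handled by symmetry, reversing time as the bridge is cyclically exchangeable. Equivalently, split $\Phi$'s window into two halves and apply the density argument to each half. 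Since $\|f_n\|_\infty\le1$, the total-variation distance between the two bridge laws, $o(1)+O(\varepsilon)$, controls $|h_n(s)-h_n(0)|$. Then let $\varepsilon\to0$.

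\textbf{Step 3: conclude.} Split according to $|S_{M_n}|\le r_n$. On this event, $\Psi_{n-M_n}(-1-s)/\Psi_n(-1)\to1$ uniformly, and $h_n(s)=h_n(0)+o(1)$. Hence
$$\E[I_1F\mid S_n=-1]=(h_n(0)+o(1))p_n+O\bigl(\P(I_1=1,|S_{M_n}|>r_n)\bigr).$$
By Lemma \ref{lem:pnqn}(i) the error term is $o(p_n)$. Likewise, $\E[F\mid S_n=-1]=h_n(0)+o(1)$, using $\P(|S_{M_n}|>r_n)\to0$ together with the uniform bound $\Psi_{n-M_n}/\Psi_n(-1)\le C$. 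Since $q_n\sim p_n\sim\pi_n$ by Lemma \ref{lem:pnqn}(ii), the covariance is $o(\pi_n)$, hence $b_3^n=o(n\pi_n)\to0$ because $(n\pi_n)$ is bounded.
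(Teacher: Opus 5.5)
Your skeleton matches the paper's: the Markov property at time $M_n$, the bridge expectation $h_n(s)$ over the remaining $n-M_n$ increments, truncation at $|S_{M_n}|\le r_n=\sqrt{a_{M_n}a_n}$ via Lemma \ref{lem:pnqn}, and the Radon--Nikodym density on an initial segment of length $n-M_n-\lfloor\varepsilon n\rfloor$. Your Step 3 is correct. Step 1, however, is unnecessary. For $M_n+1\le j\le n-M_n+1$ the indicator $I_j$ depends only on $X_j,\ldots,X_{j+M_n-1}$, and $j+M_n-1\le n$. So there is no wrap-around, and $F$ is already a function of $(X_{M_n+1},\ldots,X_n)$. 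Enlarging $B_i^n$ would change the quantity $b_3^n$ in the statement. Your error bound there also invokes Assumption \ref{hyp:C}, which this proposition does not assume.

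The genuine gap is in your proof of $\sup_{|s|\le r_n}|h_n(s)-h_n(0)|\to0$, where you handle the dependence of $\Phi$ on the last $\lfloor\varepsilon n\rfloor$ increments. Bridges of length $n-M_n$ with different endpoints are mutually singular, so there is no total-variation bound $o(1)+O(\varepsilon)$ between the two bridge laws. Absolute continuity only controls functionals of an initial segment.

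Time reversal, or splitting the window into two halves, does not repair this. First, $f_n$ is an arbitrary bounded function of the \emph{total} count, and total-variation closeness of the laws of each of two sub-windows says nothing about a joint functional. For instance, $\mathbbm{1}\{S'_{n-M_n}=-1\}$ is a function of the two half-sums, each with close marginals, yet its expectations under the two bridges are $1$ and $0$. Second, with halves the count in each half can have mean of order one, so neither half is negligible.

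The missing idea is that $\Phi$ is \emph{approximately} a function of the initial segment, because the indicators are rare. Let $\Phi^\varepsilon$ be obtained by discarding the indicators whose blocks reach into the last $\lfloor\varepsilon n\rfloor$ coordinates, and let $h_n^\varepsilon$ be the corresponding bridge expectation. Since $\|f_n\|_\infty\le1$, a union bound, exchangeability of the bridge, and Lemma \ref{lem:error_est} (total length $n-M_n$, block length $M_n$, $z_n=1+r_n$) give, uniformly in $|s|\le r_n$,
\[
|h_n(s)-h_n^\varepsilon(s)|\le 2(\lfloor\varepsilon n\rfloor+1)\,\P\bigl(I_1^{(M_n)}=1\mid S_{n-M_n}=-1-s\bigr)\le C\varepsilon\, n p_n+o(1)=O(\varepsilon).
\]
The last step is where the hypothesis that $(n\pi_n)$ is bounded is used essentially. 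Your density argument then applies to $h_n^\varepsilon$, and letting $\varepsilon\to0$ concludes. In your proposal the bounded-mean hypothesis appears only in the final line, which signals that this step is missing.
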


The proof is based on the following  lemma.

\begin{lemma}\label{lem:1} For $k \in \Z$ set
$$  h_{n}(k)= \E\!\left[ f_{n}\big( 1 +\sum_{j=1}^{n-2M_{n}+1} I^{(M_{n})}_j\big) \;\Big|\; S_{n-M_{n}} = -1-k\right], \quad \Delta_{n}(k) = h_{n}(k) - h_{n}(0).$$
Let $f_{n}$ be a function with $||f_{n}||_{\infty} \leq 1$. Assume that $(n \pi_{n})$ is bounded. Then
$$n\,\E\!\left[(I_1^{(M_n)} - q_{n})\,\Delta_{n}(S_{M_n}) \mid S_n = -1\right]  \quad \mathop{\longrightarrow}_{n \rightarrow \infty} \quad0.$$
\end{lemma}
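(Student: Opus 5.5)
We split the expectation according to whether $|S_{M_n}|\le r_n$ or $|S_{M_n}|>r_n$, with $r_n=\sqrt{a_{M_n}a_n}$ as in Lemma \ref{lem:pnqn}, and treat the two parts by different arguments: a continuity estimate for $h_n$ on the good event, and crude bounds on the bad event. Since $\|f_n\|_\infty\le1$, we always have $|\Delta_n|\le 2$. The term involving $q_n$ and the term involving $I_1^{(M_n)}$ are handled separately, because
\[
|I_1^{(M_n)}-q_n|\,|\Delta_n(S_{M_n})|\le I_1^{(M_n)}|\Delta_n(S_{M_n})|+q_n|\Delta_n(S_{M_n})|.
\]
Recall also that $nq_n\sim n\pi_n$ is bounded by Lemma \ref{lem:pnqn}\textup{(ii)}.

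\emph{Bad event.} By Lemma \ref{lem:error_est} (the event depends on $M_n=o(n)$ increments), we have
\[
\P(I_1^{(M_n)}=1,\ |S_{M_n}|>r_n\mid S_n=-1)\le C\,\P(I_1^{(M_n)}=1,\ |S_{M_n}|>r_n).
\]
By Lemma \ref{lem:pnqn}\textup{(i)} the right-hand side is $o(p_n)=o(\pi_n)$, so this contribution is $n\cdot 2\cdot o(\pi_n)\to0$. Similarly,
\[
nq_n\,\P(|S_{M_n}|>r_n\mid S_n=-1)\le C\,nq_n\,\P(|S_{M_n}|>r_n)\to0,
\]
by tightness of $S_{M_n}/a_{M_n}$ and $r_n/a_{M_n}\to\infty$.

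\emph{Good event.} It suffices to show that $\varepsilon_n:=\sup_{|k|\le r_n}|\Delta_n(k)|\to0$. Then the good-event contribution is at most $n(q_n+q_n)\varepsilon_n$, because $\E[I_1^{(M_n)}\mid S_n=-1]=q_n$, and this tends to $0$. To control $h_n(k)-h_n(0)$, note that $h_n(k)=\E[F_n(X_1,\dots,X_{n-M_n})\mid S_{n-M_n}=-1-k]$ for a functional $F_n$ bounded by $1$. Write $N=n-M_n$ and change measure between the two bridge endpoints. Conditioning on the first $N-M_n$ increments (note that the sum $\sum_{j\le n-2M_n+1}I_j^{(M_n)}$ only uses $X_1,\dots,X_{N}$, so $F_n$ is not measurable with respect to a strict prefix), we have
\[
\E[F\mid S_N=y]=\E\big[F\,\Psi_0(\cdot)\ldots\big],
\]
which is useless. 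Instead I would use a time-reversal/cyclic argument: it suffices to compare the laws of the bridge of length $N$ ending at $-1-k$ and at $-1$ in total variation. By the Markov property at a time $N-m$ with $m=\lfloor N/2\rfloor$ and then symmetrically, this total variation is at most
\[
\E\Big[\Big|\frac{\Psi_{m}(y-S_{N-m})}{\Psi_N(y)}-\frac{\Psi_m(-1-S_{N-m})}{\Psi_N(-1)}\Big|\Big]
\]
for the prefix part, plus the same bound after reversal for the suffix. This tends to $0$ uniformly in $|y+1|\le r_n=o(a_n)$ by the local limit theorem \eqref{eq:LL}, using the uniform continuity of $d_\alpha$ and $\Psi_N(y)\sim\Psi_N(-1)$. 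Cleaner still: the law of $(X_1,\dots,X_N)$ given $S_N=y$ has a density with respect to the law given $S_N=-1$. On prefixes of length $N-m$ this density ratio tends to $1$ uniformly off an event of vanishing probability where $|S_{N-m}|$ exceeds $Ka_N$, and the global total variation can be split into a prefix comparison plus a coupling of the remaining $m$ steps.

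The main obstacle is precisely this last step. We need the bridge total variation distance $\dTV\big(\mathcal L(X_{1..N}\mid S_N=-1-k),\mathcal L(X_{1..N}\mid S_N=-1)\big)\to0$ uniformly in $|k|\le r_n$, but $F_n$ sees all increments, so a one-sided absolute continuity argument does not close by itself. I would try first the trick of comparing with an intermediate endpoint: condition on $S_{\lfloor N/2\rfloor}$ and $S_N$, so that the two halves are independent bridges. The first half has law depending on $k$ only through the distribution of $S_{\lfloor N/2\rfloor}$ given $S_N$, whose density converges to the same limit uniformly by \eqref{eq:LL}. The second half is then handled by time reversal of increments, which maps a bridge from $x$ to $y$ to a bridge in the opposite direction, so the same argument applies to it.
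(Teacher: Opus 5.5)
Your treatment of the event $\{|S_{M_n}|>r_n\}$ is correct and matches the paper, and so is the reduction of the good event to $\sup_{|k|\le r_n}|\Delta_n(k)|\to0$. The gap is in proving this last convergence.

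The quantity you propose to control, $\dTV\bigl(\mathcal L(X_1,\ldots,X_N\mid S_N=-1-k),\,\mathcal L(X_1,\ldots,X_N\mid S_N=-1)\bigr)$, equals $1$ for every $k\neq0$. Indeed, $S_N$ is a function of the full increment vector. It equals $-1-k$ almost surely under the first law and $-1$ almost surely under the second, so the two laws are mutually singular. No prefix/suffix splitting can make this small: the total variation of a non-product joint law is not bounded by the sum of the total variations of its marginals. In your intermediate-endpoint variant, the second half is a bridge from $x$ to $-1-k$ versus a bridge from $x$ to $-1$, which are again singular. Reversing the increments preserves their sum, so it does not help. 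Only your prefix estimate, with the ratio $\Psi_m(y-S_{N-m})/\Psi_N(y)$, is meaningful, and it only controls functionals of a strict prefix, which $F_n$ is not.

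The missing idea is to use the rarity of marks to make the functional depend on a strict prefix, by discarding a suffix of length $\lfloor\varepsilon n\rfloor$ (not $N/2$).

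\emph{Truncation.} Replace the sum over $j\le n-2M_n+1$ by the sum over $j\le n-2M_n+1-\lfloor\varepsilon n\rfloor$, and call the resulting quantity $h_n^\varepsilon(k)$. It only involves $X_1,\ldots,X_{N-\lfloor\varepsilon n\rfloor}$. The two arguments of $f_n$ differ only if one of the at most $\lfloor\varepsilon n\rfloor+1$ removed indicators equals $1$. By exchangeability of the bridge increments and Lemma \ref{lem:error_est} (total length $N$, block length $M_n$, $z_n=1+r_n$), this costs at most $2(\lfloor\varepsilon n\rfloor+1)\,C p_n$, uniformly in $|k|\le r_n$. Since $np_n\sim nq_n$ is bounded (Lemma \ref{lem:pnqn}), this is at most $C'\varepsilon+o(1)$.

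\emph{Prefix comparison.} For the truncated functional your density-ratio argument now works. The ratio $\Psi_{\lfloor\varepsilon n\rfloor}(-1-k-S_{N-\lfloor\varepsilon n\rfloor})/\Psi_N(-1-k)$ is bounded by some $C_\varepsilon$. By the local limit theorem and $r_n=o(a_n)$, it converges to its $k=0$ counterpart uniformly in $|k|\le r_n$. Dominated convergence then gives $\sup_{|k|\le r_n}|h_n^\varepsilon(k)-h_n^\varepsilon(0)|\to0$ for each fixed $\varepsilon$.

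\emph{Order of limits.} Let $n\to\infty$ first and then $\varepsilon\to0$. This order is essential: with your choice $m=\lfloor N/2\rfloor$, the truncation cost would be of order $nq_n$, which does not vanish.
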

Observe that since $I^{(M_{n})}_j$ only depends on $X_{j},X_{j+1}, \ldots,X_{j+M_{n}-1}$, the random variable in the argument of the function $f_{n}$ only depends on $(X_{1}, \ldots,X_{n-M_{n}})$.

Taking Lemma \ref{lem:1} for granted, the proof of Proposition \ref{prop:b3bis} is short.
\begin{proof}[Proof of Proposition \ref{prop:b3bis}]
Conditionally given $\{S_{M_n} = k,\, S_n = -1\}$, the blocks $(X_1,\ldots,X_{M_n})$ and $(X_{M_n+1},\ldots,X_n)$ are independent (being disjoint segments of i.i.d.\ variables conditioned on their respective sums to be $k$ and $-1-k$).  Thus
$$b^{n}_{3} =  n \left| \E\!\left[(I_1^{(M_n)} - q_n)\,h_{n}(S_{M_n}) \mid S_n = -1\right]\right| .$$
Since $\E[I_1^{(M_n)} \mid S_{n}=-1]=q_{n}$, it follows that
$$b^{n}_{3} =  n \left| \E\!\left[(I_1^{(M_n)} - q_n)\,\Delta_{n}(S_{M_n}) \mid S_n = -1\right] \right|.$$
and the desired result follows from Lemma \ref{lem:1}.
\end{proof}

It remains to establish Lemma \ref{lem:1}.
\begin{proof}[Proof of Lemma \ref{lem:1}] We keep the notation $\Psi_{n}(i)=\Pr{S_{n}=i}$ and set 
$$R_{n} = n\,\E\!\left[(I_1^{(M_n)} - q_n)\,\Delta_{n}(S_{M_n}) \mid S_n = -1\right].$$
Recall that $\zeta=\zeta((X_{i})_{i \geq 1})$ is  the first hitting time of $-1$ by $(S_{k})_{k \geq 1}$.

 We first truncate according to the value of $S_{M_{n}}$. Set $r_n=\sqrt{a_{M_n}a_n}$. Since $a_n$ is regularly varying with positive
index $1/\alpha$ and $M_n=o(n)$, Potter bounds  \eqref{eq: potter_bound} imply that
$a_{M_n}/a_n\to0$. Hence $r_n/a_{M_n}\to\infty$ and $r_n/a_n\to0$.

Then $|R_n| \leq R_n^1 + R_n^2$ where

\begin{align*}
  R_n^1 &= n\,\E\!\left[(I_1^{(M_n)} + q_{n})\,|\Delta_{n}(S_{M_n})|\,\mathbf{1}_{|S_{M_n}|\leq r_n} \mid S_n=-1\right], \\
  R_n^2 &= n\,\E\!\left[(I_1^{(M_n)} + q_{n})\,|\Delta_{n}(S_{M_n})|\,\mathbf{1}_{|S_{M_n}|> r_n} \mid S_n=-1\right].
\end{align*}

\paragraph{Bounding $R_{n}^{2}$.}
Since $|\Delta_{n}(k)| \leq 2\|f_{n}\|_\infty \leq 2$, we have
\[
  R_n^2 \leq 2n\,\E\!\left[I_1^{(M_n)}\,\mathbf{1}_{|S_{M_n}|>r_n} \mid S_n=-1\right] + 2n q_{n}\,\P(|S_{M_n}|>r_n \mid S_n=-1).
\]
For the second term, by Lemma \ref{lem:error_est}, $\P(|S_{M_n}|>r_n \mid S_n=-1) \leq C \P(|S_{M_n}|>r_n) \rightarrow 0$ since $S_{M_{n}}/a_{M_{n}}$ converges in distribution and $r_{n}/a_{M_{n}} \rightarrow\infty$.

For the first term, since $I_1^{(M_n)}$ is measurable with respect to $(X_1, \ldots, X_{M_n})$, Lemma \ref{lem:error_est} and Lemma \ref{lem:pnqn}\textup{(i)} give
$$ \E\!\left[I_1^{(M_n)}\,\mathbf{1}_{|S_{M_n}|>r_n} \mid S_n=-1\right] \le C \P(I_1^{(M_n)}=1, |S_{M_n}|>r_n) = o(p_n) = o(q_n). $$
Thus $n\,\E\!\left[I_1^{(M_n)}\,\mathbf{1}_{|S_{M_n}|>r_n} \mid S_n=-1\right] = o(nq_n) \to 0$ since $(nq_n)$ is bounded.

\paragraph{Bounding $R_n^1$.}
Since $\E[(I_1^{(M_n)}+q _{n})\mid S_n=-1] = 2 q_{n}$, we have
\[
  R_n^1 \leq 2n q_{n}\,\sup_{|k|\leq r_n}|\Delta_{n}(k)|.
\]
It is thus enough to show that
\begin{equation}
\label{eq:delta_vanish}\sup_{|k|\leq r_n}|\Delta_{n}(k)|  \quad \mathop{\longrightarrow}_{n \rightarrow \infty} \quad0.
\end{equation}
The proof is also based on a cutoff argument. For $\varepsilon \in (0,1)$, $n \geq 1$ and $k \in \Z$ set
$$
  h_n^{\varepsilon}(k) = \E\!\left[ f_{n}\big( 1 +\sum_{j=1}^{n-2M_{n}+1 - \lfloor \varepsilon n \rfloor} I^{(M_{n})}_j\big)  \;\Big|\; S_{n-M_{n}} = -1-k\right].$$
  We shall establish the following two estimates
  \begin{eqnarray}
\exists C>0,  \forall \varepsilon \in (0,1), & \quad \limsup_{n\to\infty}\sup_{|k|\leq r_n}|h_n^{\varepsilon}(k)-h_n(k)| \leq C\varepsilon \label{eq:approx1}\\
 \forall \varepsilon \in (0,1),&\quad  \lim_{n \rightarrow\infty} \sup_{|k|\leq r_n}|h_n^{\varepsilon}(k)-h_n^{\varepsilon}(0)|= 0   \label{eq:approx3}
\end{eqnarray}
By combining these two estimates and by writing
$$|h_n(k)-h_n(0)|  \leq  |h_n(k)-h_n^{\varepsilon}(k)| +  |h_n^{\varepsilon}(k)-h_n^{\varepsilon}(0)|  + |h_n^{\varepsilon}(0)-h_n(0)|,$$
\eqref{eq:delta_vanish} readily follows.

\emph{Proof of \eqref{eq:approx1}.}  Observe that the random variables in the argument of $f_{n}$ in $h_n^{\varepsilon}(k)$ and  $h_n(k)$ are equal unless  $\sum_{i=n-2M_{n}+1-\lfloor \varepsilon n \rfloor}^{n-2M_{n}+1} I_i^{(M_{n})} \geq 1$. Thus for $k \in \Z$
\begin{eqnarray}
 |h_n^\varepsilon(k)-h_n(k)| &\leq& 2 \P\!\left(\sum_{i=n-2M_{n}+1-\lfloor \varepsilon n \rfloor}^{n-2M_{n}+1} I_i^{(M_{n})}  \geq 1\;\Big|\; S_{n-M_n}=-1-k\right) \notag \\
 & \leq& 2( \lfloor \varepsilon n \rfloor +1) \,\P\left( I_1^{(M_{n})}=1\mid S_{n-M_{n}}=-1-k\right) \label{eq:hne}
\end{eqnarray}
where the second inequality follows from a union bound combined with exchangeability. 

It remains to estimate
$\P(I_1^{(M_n)}=1\mid S_{n-M_n}=-1-k)$. Apply Lemma
\ref{lem:error_est} with total length $N_n=n-M_n$, local block length
$m_n=M_n$, and $z_n=1+r_n$. Indeed, $M_n=o(N_n)$ and
$1+r_n=o(a_{N_n})$, since $a_{n-M_n}\sim a_n$ and $r_n=o(a_n)$.
Uniformly for $|k|\le r_n$, Lemma \ref{lem:error_est} and Lemma
\ref{lem:pnqn}\textup{(ii)} therefore give
$$
\P(I_1^{(M_n)}=1\mid S_{n-M_n}=-1-k)
\le C\P(I_1^{(M_n)}=1)=Cp_n\le C'q_n.
$$
Since $nq_n=O(1)$, substituting this estimate into \eqref{eq:hne} proves
\eqref{eq:approx1}.

\emph{Proof of \eqref{eq:approx3}.} For $n \geq 1$ and $k \in \Z$ set
$$X^{\varepsilon}_{n}(k)=\left|\frac{\Psi_{\lfloor \varepsilon n \rfloor}(-1-k-S_{n-M_{n}-\lfloor \varepsilon n \rfloor})}{\Psi_{n-M_{n}}(-1-k)} - \frac{\Psi_{\lfloor \varepsilon n \rfloor}(-1-S_{n-M_{n}-\lfloor \varepsilon n \rfloor})}{\Psi_{n-M_{n}}(-1)}\right|.
$$
Using the Markov property at time $n-M_{n}-\lfloor \varepsilon n \rfloor$, write
$$  \sup_{|k|\leq r_n}|h_n^{\varepsilon}(k)-h_n^{\varepsilon}(0)|
\leq 
\E\!\left[ \sup_{|k|\leq r_n} X^{\varepsilon}_{n}(k) \right].
$$

Fix $\varepsilon\in(0,1)$. The local limit theorem and
$r_n=o(a_n)$ yield, uniformly in $|k|\le r_n$,
$$
  \Psi_{n-M_n}(-1-k)\ge \frac{c}{a_n}
  \qquad\text{and}\qquad
  \sup_x\Psi_{\lfloor\varepsilon n\rfloor}(x)
  \le \frac{C}{a_{\lfloor\varepsilon n\rfloor}}.
$$
Since $a_n/a_{\lfloor\varepsilon n\rfloor}=O_\varepsilon(1)$, we obtain
$\sup_{|k|\le r_n}X_n^\varepsilon(k)\le C_\varepsilon$ for all large $n$. In addition, since $S_{n-M_{n}-\lfloor \varepsilon n \rfloor}/a_{n}$ converges in distribution as $ n \rightarrow\infty$, without loss of generality by Skorokhod's representation theorem we may assume that this convergence holds almost surely. Then the  local limit theorem, the uniform continuity of the
stable density, and $r_n/a_n\to0$ imply  that $
  \sup_{|k|\le r_n}X_n^\varepsilon(k)$ converges almost surely to $0$.
Dominated convergence therefore gives
$
  \E[\sup_{|k|\le r_n}X_n^\varepsilon(k)]
\to 0$,
which proves \eqref{eq:approx3}.
\end{proof}

\subsection{Boundary cases}

We keep the notation of Section \ref{ssec:error} and in particular write $I^{(M_{n})}_{i}=I^{(M_{n})}_{i}(X_{1}, \ldots,X_{n})$,   where the indicators $I_i^{(M_{n})}$ are still those of Definition
\ref{def:indicators} applied with $G=G_{k_n}$. It will be useful to have bounds in the two boundary cases
$n \pi_{n} \rightarrow 0$ and $n \pi_{n} \rightarrow \infty$.

\begin{lemma}
\label{lem:extreme_means}
Assume \ref{hyp:M}  for $(G_{k})$  along $(k_{n})$ with respect to $(M_{n})$. The following statements hold. 
\begin{enumerate}[label=\textup{(\roman*)}]
\item  If $n\pi_n\to0$, then under $\P(\cdot\mid S_n=-1)$,
$\sum_{i=1}^n I_i^{(M_n)}$ converges in probability to $0$;
\item  Assume in addition \ref{hyp:C} for $(G_k)$  along $(k_{n})$ with respect to $(M_{n})$. If $n\pi_n\to\infty$, then $$ \frac{\sum_{i=1}^n I_i^{(M_n)}}{n\pi_n}  \quad \mathop{\longrightarrow}^{(\P)}_{n \rightarrow \infty} \quad1 \qquad \textrm{under } \P(\cdot\mid S_n=-1).$$
\end{enumerate}
\end{lemma}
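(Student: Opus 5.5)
For both parts I would use first and second moment computations under $\P(\cdot\mid S_n=-1)$, with $W_n=\sum_{i=1}^n I_i^{(M_n)}$. By cyclic exchangeability, $\E[W_n\mid S_n=-1]=nq_n$, and Lemma \ref{lem:pnqn}\textup{(ii)} gives $q_n\sim\pi_n$. This uses only \ref{hyp:M}.

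\emph{Part (i).} Since $W_n\ge 0$, Markov's inequality gives $\P(W_n\ge1\mid S_n=-1)\le nq_n=O(n\pi_n)\to0$. Hence $W_n\to0$ in probability.

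\emph{Part (ii).} I would show $\E[W_n^2\mid S_n=-1]\le (n\pi_n)^2(1+o(1))+O(n\pi_n)$. Together with $\E[W_n\mid S_n=-1]\sim n\pi_n\to\infty$, this gives $\Var(W_n\mid S_n=-1)=o((n\pi_n)^2)$, and Chebyshev's inequality concludes. By exchangeability,
$$
\E[W_n^2\mid S_n=-1]=nq_n+n\sum_{j\neq 1}\E\bigl[I_1^{(M_n)}I_j^{(M_n)}\mid S_n=-1\bigr].
$$
I would split the sum according to whether $|1-j|_{\mathrm{cyc}}<M_n$ (local pairs) or not (separated pairs).

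\emph{Local pairs.} This is exactly the $b_2^n$ computation of Lemma \ref{lem:boundb2}. Lemma \ref{lem:error_est} removes the conditioning at the cost of a constant, and then
$$
\E\bigl[I_1^{(M_n)}I_{1+\ell}^{(M_n)}\bigr]\le \pi_n^2+A_n(\ell).
$$
Both orientations occur, but by cyclic symmetry the pair $(1,1-\ell)$ has the same law as $(1+\ell,1)$. So the local contribution is at most $C(nM_n\pi_n^2+n\sum_{\ell\le M_n}A_n(\ell))$. The first term is $o(n^2\pi_n^2)$ because $M_n=o(n)$; the second is $o(n^2\pi_n^2)$ by \ref{hyp:C}. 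Note that here we no longer use boundedness of $n\pi_n$.

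\emph{Separated pairs.} This is the main obstacle: I must show that
$$
\E\bigl[I_1^{(M_n)}I_j^{(M_n)}\mid S_n=-1\bigr]\le q_n^2(1+o(1))
$$
uniformly over $M_n\le j-1\le n-M_n$. In fact a uniform bound $(1+o(1))p_n^2$ suffices, since $p_n\sim q_n$. The two indicators depend on disjoint blocks of $M_n$ increments. I would condition on the block sums, writing
$$
\E\bigl[I_1I_j\mid S_n=-1\bigr]=\E\!\left[I_1I_j\,\frac{\Psi_{n-2M_n}(-1-S^{(1)}-S^{(j)})}{\Psi_n(-1)}\right],
$$
where $S^{(1)},S^{(j)}$ are the sums over the two blocks. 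These blocks are independent without conditioning, so the right-hand side is an unconditioned expectation. On $\{|S^{(1)}|,|S^{(j)}|\le r_n\}$, with $r_n=\sqrt{a_{M_n}a_n}=o(a_n)$, the local limit theorem makes the ratio $1+o(1)$ uniformly, giving $(1+o(1))p_n^2$. On the complement, the ratio is bounded by a constant, and Lemma \ref{lem:pnqn}\textup{(i)} together with independence gives $o(p_n)\cdot p_n$.

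One caveat is that the wrap-around makes $I_j$ depend on a block that is contiguous only cyclically. Cyclic invariance of the bridge lets me rotate so that both blocks sit inside $[1,n]$ without wrapping. Summing, the separated pairs contribute at most $n^2q_n^2(1+o(1))$, and the variance bound follows.
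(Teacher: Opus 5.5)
Your proposal is correct and follows essentially the same route as the paper: Markov's inequality for (i), and for (ii) Chebyshev after splitting the second moment into the diagonal term, the local pairs (handled as in Lemma \ref{lem:boundb2} via \ref{hyp:C}, without using boundedness of $n\pi_n$), and the separated pairs. For the separated pairs, both you and the paper use the Markov property on the two disjoint blocks, the local limit theorem on $\{|S^{(1)}|,|S^{(j)}|\le r_n\}$, and Lemma \ref{lem:pnqn}(i) with independence on the complement. One small remark: for separated $j$, i.e. $M_n+1\le j\le n-M_n+1$, the block of $I_j$ ends at $j+M_n-1\le n$ and never wraps around, so your rotation caveat is unnecessary (wrap-around only arises among the local pairs, which you already handle by cyclic symmetry).
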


\begin{proof}
Set $W_n^{M_n}=\sum_{i=1}^n I_i^{(M_n)}$. We keep the notation $ \pi_{n}=\P(G_{k_{n}}( \mathcal{T})=1)$, $q_n=\P(I_1^{(M_n)}=1\mid S_n=-1)$ and $p_n=\P(I_1^{(M_n)}=1)$. By Lemma \ref{lem:pnqn}, $p_n \sim q_n \sim \pi_n$.

If $n\pi_n\to0$, then
$\E[W_n^{M_n}\mid S_n=-1]=nq_n\sim n\pi_n\to0$, and Markov's inequality gives
$W_n^{M_n}\to0$ in probability.

Assume now that $n\pi_n\to\infty$. Put
$\mu_n=\E[W_n^{M_n}\mid S_n=-1]=nq_n$. Then
$\mu_n\sim n\pi_n\to\infty$. We prove the stronger statement
$W_n^{M_n}/\mu_n\to1$ in probability under $\P(\cdot\mid S_n=-1)$.

We first estimate the variance. The diagonal contribution to
$\E[(W_n^{M_n})^2\mid S_n=-1]$ is $\mu_n=o(\mu_n^2)$. For the local pairs, the same argument as in the proof of Lemma
\ref{lem:boundb2} gives
$$
\sum_{i=1}^n
\sum_{1\le |j-i|_{\mathrm{cyc}}<M_n}
\E[I_i^{(M_n)}I_j^{(M_n)}\mid S_n=-1]
\le
C n\left(M_n\pi_n^2+\sum_{\ell=1}^{M_n-1}A_n(\ell)\right).
$$
Assumption \ref{hyp:C} gives
$\sum_{\ell=1}^{M_n}A_n(\ell)=o(n\pi_n^2)$. Since $M_n=o(n)$, the local-pair
contribution is $o(n^2\pi_n^2)=o(\mu_n^2)$.

It remains to control the separated pairs. We claim that, uniformly for
$M_n+1\le j\le n-M_n+1$,
$$
\P(I_1^{(M_n)}=1,\ I_j^{(M_n)}=1\mid S_n=-1)
=
q_n^2+o(\pi_n^2).
$$
By cyclic exchangeability, this estimate applies to every separated pair. Since
the number of non-separated pairs is $O(nM_n)$, the separated contribution is
$(n^2+O(nM_n))q_n^2+o(n^2\pi_n^2)=\mu_n^2+o(\mu_n^2)$, because
$M_n/n\to0$ and $q_n\sim\pi_n$.

We now prove the claim. Set $Z_{j,n}=S_{j+M_n-1}-S_{j-1}$. Since the blocks
$(X_1,\ldots,X_{M_n})$ and $(X_j,\ldots,X_{j+M_n-1})$ are disjoint, they are
independent under the unconditioned law. Hence the Markov property gives
$$
\P(I_1^{(M_n)}=1,\ I_j^{(M_n)}=1\mid S_n=-1)
=
\frac{1}{\Psi_n(-1)}
\E\left[
I_1^{(M_n)}I_j^{(M_n)}
\Psi_{n-2M_n}(-1-S_{M_n}-Z_{j,n})
\right].
$$
On $\{|S_{M_n}|\le r_n,\ |Z_{j,n}|\le r_n\}$, the local limit theorem gives
$\Psi_{n-2M_n}(-1-S_{M_n}-Z_{j,n})/\Psi_n(-1)\to1$ uniformly. On the
complement, the local limit theorem gives
$\Psi_{n-2M_n}(\cdot)/\Psi_n(-1)\le C$, and independence of the two blocks
gives
$$
\P(I_1^{(M_n)}I_j^{(M_n)}\mathbbm{1}_{\{|S_{M_n}|>r_n\}})
\le
\P(I_1^{(M_n)}=1,\ |S_{M_n}|>r_n)\,p_n
=o(p_n^2),
$$
where the last equality follows from Lemma \ref{lem:pnqn}\textup{(i)}, and the same bound holds when $Z_{j,n}$ replaces $S_{M_n}$. Therefore the
conditional probability above is $p_n^2+o(p_n^2)$. Since
$p_n\sim q_n\sim\pi_n$, the claim follows.

Combining the diagonal, local-pair and separated-pair estimates gives
$\Var(W_n^{M_n}\mid S_n=-1)=o(\mu_n^2)$. By Chebyshev's inequality,
$W_n^{M_n}/\mu_n\to1$ in probability under $\P(\cdot\mid S_n=-1)$. Since
$\mu_n=nq_n$ and $q_n\sim\pi_n$, this gives
$
{W_n^{M_n}}/(n\pi_n)\to1
$
in probability under $\P(\cdot\mid S_n=-1)$.
\end{proof}

\subsection{Proof of Theorem \ref{thm:PoissonApproximation}}

We now have all the ingredients to establish  Theorem \ref{thm:PoissonApproximation}.

\begin{proof}[Proof of Theorem \ref{thm:PoissonApproximation}]
We keep the notation $\pi_n=\P(G_{k_n}(\mathcal T)=1)$ and
$q_n=\P(I_1^{(M_n)}=1\mid S_n=-1)$. Let
$N_n=\#\{u\in\mathcal T^n:G_{k_n}(\mathcal T^n_u)=1\}$.
By Lemma \ref{lem:RW}, $N_n$ has the same law as
$W_n^n=\sum_{i=1}^n I_i^{(n)}$ under $\P(\cdot\mid S_n=-1)$.

Assume first that $(n\pi_n)$ is bounded. By Lemma \ref{lem:truncate},
$W_n^n$ and $W_n^{M_n}$ have total variation distance tending to $0$ under
$\P(\cdot\mid S_n=-1)$, and $nq_n-n\pi_n\to0$. We apply Proposition
\ref{prop:AGG} to the indicators $(I_i^{(M_n)})_{1\le i\le n}$ under
$\P(\cdot\mid S_n=-1)$, with neighborhoods
$B_i^n=\{j\in[n]: |i-j|_{\mathrm{cyc}}<M_n\}$. Since $(n\pi_n)$ is bounded,
Proposition \ref{prop:b1}, Lemma \ref{lem:boundb2} and Proposition
\ref{prop:b3bis} give $b_1^n\to0$, $b_2^n\to0$ and $b_3^n\to0$. Therefore
$$
\dTV(W_n^{M_n} \textrm{ under } \P( \cdot \mid S_{n}=-1),\Po(nq_n))\to0.
$$
Since $\dTV(\Po(nq_n),\Po(n\pi_n))\le |nq_n-n\pi_n|\to0$, the triangle
inequality gives
$$
\dTV(W_n^n \textrm{ under } \P( \cdot \mid S_{n}=-1),\Po(n\pi_n))\to0.
$$
Lemma \ref{lem:RW} then gives the desired
Poisson approximation for the tree count.

Assume now  that $n\pi_n\to\infty$.
By Lemma \ref{lem:extreme_means}\textup{(ii)},
$W_n^{M_n}/(n\pi_n)\to1$ in probability under $\P(\cdot\mid S_n=-1)$. Moreover,
by Assumption \ref{hyp:M},
$$
\E[W_n^n-W_n^{M_n}\mid S_n=-1]
=
n\P(G_{k_n}(\mathcal T^n_{U_n})=1,\ |\mathcal T^n_{U_n}|>M_n)
=
o(n\pi_n).
$$
Hence Markov's inequality gives
$(W_n^n-W_n^{M_n})/(n\pi_n)\to0$ in probability under
$\P(\cdot\mid S_n=-1)$. Therefore $W_n^n/(n\pi_n)\to1$ 
in probability under $\P(\cdot\mid S_n=-1)$. By Lemma \ref{lem:RW}, $
{N_n}/({n\pi_n})\to1$
in probability. In particular, $N_n\to\infty$ in probability. This completes the proof.
\end{proof}

\section{Declumping}
\label{sec:declumping}

This section proves the declumping result stated in Theorem
\ref{thm:PoissonApproximation2}. The idea is to replace raw marked
occurrences, which may form clusters, by indicators which select boundary
points of such clusters. The proof has two ingredients. First, Assumption
\ref{hyp:S} gives a structural description of a tree conditioned on a raw
marked event: the marked event can be witnessed by a small skeleton with a
small number of open leaves, onto which independent Bienaym\'e trees are
grafted. We show that this implies the microscopicity estimates needed to
apply Theorem \ref{thm:PoissonApproximation}.

We then apply Theorem \ref{thm:PoissonApproximation} to the corresponding
declumped indicators. Throughout this section, let $(G_k)_{k\ge1}$ be a
sequence of $\{0,1\}$-valued functions on finite plane trees, and fix a
sequence $(k_n)$ of integers. Let $B\subset\mathbb Z_+$ be such that
$$
  c_B
  =
  \E\left[
    k_\varnothing(\mathcal T)
    \mathbbm{1}_{\{k_\varnothing(\mathcal T)\in B\}}
  \right]
  \in(0,\infty).
$$
For every $k\ge1$, define
$$
  \widehat G_k(T)
  =
  \mathbbm{1}_{\{k_\varnothing(T)\in B\}}
  \mathbbm{1}_{\{\exists u\in\Gamma_\varnothing(T):G_k(T_u)=1\}}.
$$
Thus $\widehat G_k(T)=1$ means that the root of $T$ satisfies the degree
condition encoded by $B$ and has at least one child whose fringe subtree is
marked by $G_k$.

The root-declumping estimate below shows that, in the bounded-mean regime,
$\P(\widehat G_{k_n}(\mathcal T)=1)$ is asymptotic to
$c_B\P(G_{k_n}(\mathcal T)=1)$. Combined with the microscopicity estimates
deduced from Assumption \ref{hyp:S}, this proves Theorem
\ref{thm:PoissonApproximation2}. Finally, we record a deterministic comparison
lemma which allows us, in the extremal applications, to pass from declumped
counts back to the absence of raw occurrences.
 
 As previously mentioned, we implicitly assume in the proofs that $\mu$ is aperiodic.

\begin{lemma}
\label{lem:root_declumping}
Set $\pi_n=\P(G_{k_n}(\mathcal T)=1)$ and
$\widehat{\pi}_n=\P(\widehat{G}_{k_n}(\mathcal T)=1)$.
Set
$$
b_B=\P\bigl(k_\varnothing(\mathcal T)\in B,\
k_\varnothing(\mathcal T)\ge1\bigr)>0.
$$
Then $\widehat\pi_n\ge b_B\pi_n$ for every $n \geq 1$. In addition:
\begin{enumerate}
\item[(i)]  If $(n\pi_n)$ is bounded, then $
n\widehat{\pi}_n-c_B n\pi_n\to0$.
\item[(ii)] If $n\pi_n\to\infty$, then $n\widehat{\pi}_n\to\infty$.
\item[(iii)] Finally, if $(G_k)$ satisfies Assumption \ref{hyp:S} along $(k_n)$ with
respect to $(M_n)$, then $(\widehat G_k)$ satisfies Assumption \ref{hyp:M}
along $(k_n)$ with respect to the same cutoff.
\end{enumerate}
\end{lemma}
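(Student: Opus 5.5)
The plan is to condition on the root degree and use the branching structure. If $k_\varnothing(\mathcal T)=d$, the fringe subtrees at the children are i.i.d.\ copies of $\mathcal T$, so
$$
\widehat\pi_n=\sum_{d\in B}\mu_d\bigl(1-(1-\pi_n)^d\bigr).
$$
\emph{Lower bound.} Since $1-(1-\pi_n)^d\ge \pi_n$ for $d\ge1$, we get $\widehat\pi_n\ge \pi_n\sum_{d\in B,d\ge1}\mu_d=b_B\pi_n$. Moreover $b_B>0$, because $c_B>0$ forces some $d\ge1$ in $B$ with $\mu_d>0$.

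\emph{Part (ii)} is then immediate from $n\widehat\pi_n\ge b_B n\pi_n\to\infty$.

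\emph{Part (i).} We use $d\pi_n-\binom d2\pi_n^2\le 1-(1-\pi_n)^d\le d\pi_n$ together with $1-(1-\pi_n)^d\le 1$. This gives $0\le c_B\pi_n-\widehat\pi_n$, and
$$
n(c_B\pi_n-\widehat\pi_n)\le n\sum_{d\in B}\mu_d\min\bigl(d\pi_n,\tfrac{d^2}{2}\pi_n^2\bigr)=n\pi_n\sum_{d}d\mu_d\min\bigl(1,\tfrac{d\pi_n}{2}\bigr).
$$
Because $n\pi_n$ is bounded, $\pi_n\to0$. Since $\sum_d d\mu_d=1<\infty$, dominated convergence makes the last sum tend to $0$. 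No second moment is needed here, and this is the point where the infinite-variance case must be handled with care.

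\emph{Part (iii)} is the main step. Here I would verify Assumption \ref{hyp:M} for $(\widehat G_k)$ through Corollary \ref{cor:cyclicshift}, as in the proof of Corollary \ref{cor:SM}. Set $R_n(m)=\Psi_{n-m}(0)/\Psi_n(-1)$. Then
$$
\P(\widehat G_{k_n}(\mathcal T^n_{U_n})=1)=\E\bigl[\mathbbm 1_{\{\widehat G_{k_n}(\mathcal T)=1\}}R_n(|\mathcal T|)\bigr].
$$
Since $R_n(m)\to1$ uniformly for $m\le M_n$, it suffices to show two bounds: $\P(\widehat G_{k_n}(\mathcal T)=1,|\mathcal T|>M_n)=o(\widehat\pi_n)$, and the same with the weight $R_n(|\mathcal T|)$ inserted. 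Both statements of \ref{hyp:M} then follow, the second by dividing.

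\emph{Structural decomposition.} On $\{k_\varnothing=d\}$, a union bound over which child $i$ is marked gives
$$
\mathbbm 1_{\{\widehat G=1\}}\le \sum_{i\le d}\mathbbm 1_{\{G_{k_n}(\mathcal T_{u_i})=1\}}.
$$
For a fixed marked child $i$, the tree is made of three pieces: the root (size $1$), the marked subtree, which under $\P_{k_n}$ decomposes as $C_{k_n}+\sum_{j\le L_{k_n}}|\mathcal T^{(j)}|$ by \ref{hyp:S}, and an independent forest of $d-1$ Bienaymé trees. The result is again a skeleton of size $1+C_{k_n}$ with $L_{k_n}+d-1$ open leaves. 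The event is contained in $\{C_{k_n}>M_n/8\}\cup\{\text{forest}>M_n/2\}$. Lemma \ref{lem:forest_estimates}, applied to this forest conditionally on the skeleton as in Lemma \ref{lem:consequenceM}, then bounds the unweighted and weighted contributions by
$$
C\pi_n\sum_{d\in B}\mu_d\, d\Bigl(a_n\P_{k_n}(C_{k_n}>M_n/8)+\frac{\E_{k_n}[L_{k_n}]+d-1}{a_{M_n}}\Bigr).
$$

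\emph{Main obstacle.} The term $\sum_d\mu_d d^2/a_{M_n}$ may be infinite when the variance is infinite. The fix I would try first is to split on $d\le D$ versus $d>D$. For $d\le D$ the bound is $o(\pi_n)$. For $d>D$, I would instead bound the contribution crudely: drop the size constraint and use $R_n\le Ca_n$ only where necessary. More carefully, I would bound the weighted term for large $d$ by $C\pi_n\sum_{d>D}d\mu_d\,\E[\sup_x R\mid\cdot]$ via Lemma \ref{lem:forest_estimates} with the trivial bound $\min(1,\ell/a_{M_n})$. This gives at most $C\pi_n\sum_{d>D}d\mu_d$, which tends to $0$ as $D\to\infty$ uniformly in $n$. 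Since $\widehat\pi_n\ge b_B\pi_n$, all of this is $o(\widehat\pi_n)$, which completes (iii).
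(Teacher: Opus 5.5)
Your overall strategy is the paper's. The branching formula for $\widehat\pi_n$ gives the lower bound and (i)--(ii); your explicit bound $\min(d\pi_n,d^2\pi_n^2/2)$ is a quantified form of the paper's dominated convergence. For (iii) you reduce, via Corollary \ref{cor:cyclicshift}, to bounding the unweighted and bridge-weighted contributions of $\{\widehat G_{k_n}(\mathcal T)=1,\ |\mathcal T|>M_n\}$. You then union-bound over the marked child and avoid any second moment of the root degree.

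The only structural difference is bookkeeping. You merge the skeleton of the marked child from Assumption \ref{hyp:S} with the $d-1$ siblings into one skeleton with $L_{k_n}+d-1$ open leaves. The paper instead keeps the marked child separate, handles $|\mathcal T_i|>M_n/4$ by Lemma \ref{lem:consequenceM}, and applies Lemma \ref{lem:forest_estimates} only to the sibling forest. Either organisation works.

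The step that does not go through as written is the large-$d$ tail. There you need, for each $d$ and each marked child, a bridge-weighted bound of order $\pi_n$ that is \emph{uniform in $d$}. In other words, you need the ``$1$'' in $C\min(1,\ell/a_{M_n})$ for the weighted quantity.

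This is neither trivial nor contained in Lemma \ref{lem:forest_estimates}. That lemma's weighted bound is $C\ell/a_m$, which grows with $\ell=L_{k_n}+d-1$. The only trivial bound, $\Psi\le1$, gives $R_n\le Ca_n$ and loses a factor $a_n$. So $C\pi_n\sum_{d>D}d\mu_d$ does not follow from what you cite.

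The missing ingredient is the convolution identity behind \eqref{eq:parent_convolution}. A forest of $j$ trees coded by the walk has size $\zeta_{-j}$, the first hitting time of $-j$. The strong Markov property at $\zeta_{-j}$ then gives, uniformly in $j$,
$$
\E\bigl[\Psi_{N-|\mathcal F_j|}(0)\bigr]=\P\bigl(\zeta_{-j}\le N,\ S_N=-j\bigr)\le\sup_x\Psi_N(x).
$$
Take $N=n-1-c$ with $c\le M_n/8$. The local limit theorem bounds the right-hand side by $C\Psi_n(-1)$. Hence the weighted contribution per marked child on $\{C_{k_n}\le M_n/8\}$ is at most $C\pi_n\E_{k_n}\bigl[\min\bigl(1,(L_{k_n}+d-1)/a_{M_n}\bigr)\bigr]$.

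To finish, split $\min(1,x+y)\le x+\min(1,y)$ and sum with weight $d\mu_d$. Since $\E[k_\varnothing(\mathcal T)]<\infty$, either your truncation in $d$ or dominated convergence, as in the paper, then closes (iii). The part $\{C_{k_n}>M_n/8\}$ is fine as you wrote it.
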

  
\begin{proof}
Set $D=k_\varnothing(\mathcal T)$. By the branching property,
\begin{equation}
\label{eq:parent_formula}
\widehat\pi_n
=\E\left[\mathbbm{1}_{\{D\in B\}}
\left(1-(1-\pi_n)^D\right)\right].
\end{equation}
Since $p\le1-(1-p)^d\le dp$ for $d\ge1$,
\eqref{eq:parent_formula} gives
$b_B\pi_n\le\widehat\pi_n\le c_B\pi_n$. If $\pi_n\to0$, dominated
convergence in \eqref{eq:parent_formula} gives $\widehat\pi_n/\pi_n\to c_B$. This proves both
claims (i) and (ii) concerning the mean.

It remains to prove the last assertion. Write
$\P_n=\P(\,\cdot\mid G_{k_n}(\mathcal T)=1)$ and let
$\mathcal T_1,\ldots,\mathcal T_D$ be the child subtrees of the root.
For a fixed child $i$, let $\mathcal F_{D-1}^{(i)}$ be the forest formed by
the other child subtrees. For all large $n$, on
$\{G_{k_n}(\mathcal T_i)=1,\ |\mathcal T|>M_n\}$, either
$|\mathcal T_i|>M_n/4$ or
$|\mathcal F_{D-1}^{(i)}|>M_n/2$. Hence a union bound and the first estimate
of Lemma \ref{lem:forest_estimates} give
\begin{equation}
\frac{\P(\widehat G_{k_n}(\mathcal T)=1,\ |\mathcal T|>M_n)}{\pi_n}\le c_B\P_n(|\mathcal T|>M_n/4)
+C\E\left[
D\mathbbm{1}_{\{D\in B\}}
\left(1\wedge\frac{D}{a_{M_n}}\right)\right]=o(1).
\label{eq:parent_unweighted}
\end{equation}
The first term tends to zero by Lemma \ref{lem:consequenceM}, and the second
one by dominated convergence.

Conditionally on $\mathcal T_i=t$ and $D=d$, the Markov property gives
\begin{equation}
\label{eq:parent_convolution}
\E\left[\Psi_{n-|\mathcal T|}(0)\mid\mathcal T_i=t,D=d\right]
\le \sup_x\Psi_{n-1-|t|}(x).
\end{equation}
Moreover, uniformly for $|t|\le M_n/4$, Lemma
\ref{lem:forest_estimates}, the local limit theorem, and
$n-1-|t|\sim n$ give
\begin{equation}
\label{eq:parent_forest}
\frac{1}{\Psi_n(-1)}
\E\left[
\mathbbm{1}_{\{|\mathcal F_{d-1}|>M_n/2\}}
\Psi_{n-1-|t|-|\mathcal F_{d-1}|}(0)
\right]
\le C\left(1\wedge\frac{d}{a_{M_n}}\right).
\end{equation}
The truncation by $1$ follows by combining the forest estimate with
the unconditional convolution bound in \eqref{eq:parent_convolution}.
Summing \eqref{eq:parent_convolution}--\eqref{eq:parent_forest} over
the possible marked children yields
\begin{align}
&\frac{1}{\pi_n\Psi_n(-1)}
\E\left[
\mathbbm{1}_{\{\widehat G_{k_n}(\mathcal T)=1,\ |\mathcal T|>M_n\}}
\Psi_{n-|\mathcal T|}(0)\right]\notag\\
&\quad\le
c_B\E_n\left[
\frac{\sup_x\Psi_{n-1-|\mathcal T|}(x)}{\Psi_n(-1)}
\mathbbm{1}_{\{|\mathcal T|>M_n/4\}}\right]+C\E\left[
D\mathbbm{1}_{\{D\in B\}}
\left(1\wedge\frac{D}{a_{M_n}}\right)\right]
=o(1).
\label{eq:parent_bridge}
\end{align}
Lemma \ref{lem:consequenceM} applies to the first term since replacing
$n$ by $n-1$ does not change its estimate.

Since $\widehat\pi_n\ge b_B\pi_n$, \eqref{eq:parent_unweighted} and
\eqref{eq:parent_bridge} are respectively
$o(\widehat\pi_n)$ before and after bridge weighting. Finally, uniformly on
$\{|\mathcal T|\le M_n\}$,
$$
\frac{\Psi_{n-|\mathcal T|}(0)}{\Psi_n(-1)}=1+o(1).
$$
Corollary \ref{cor:cyclicshift}, together with
\eqref{eq:parent_unweighted}--\eqref{eq:parent_bridge}, therefore gives
both conditions in Assumption \ref{hyp:M} for $(\widehat G_k)$.
\end{proof}

We are finally in position to establish Theorem \ref{thm:PoissonApproximation2}.

\begin{proof}[Proof of Theorem \ref{thm:PoissonApproximation2}]
Lemma \ref{lem:root_declumping} shows that $(\widehat G_k)$ satisfies
Assumption \ref{hyp:M} along $(k_n)$ with respect to $(M_n)$. By hypothesis
it also satisfies Assumption \ref{hyp:C}.

If $(n\pi_n)$ is bounded, Theorem
\ref{thm:PoissonApproximation}\textup{(i)}, applied to $\widehat G_k$, gives
$$
\dTV\left(
\#\{u\in\mathcal T^n:\widehat G_{k_n}(\mathcal T_u^n)=1\},
\Po(n\widehat\pi_n)\right)\longrightarrow0.
$$
Lemma \ref{lem:root_declumping} and
$\dTV(\Po(a),\Po(b))\le|a-b|$ allow us to replace
$n\widehat\pi_n$ by $c_Bn\pi_n$.

If $n\pi_n\to\infty$, the same lemma gives
$n\widehat\pi_n\to\infty$.  Theorem \ref{thm:PoissonApproximation}\textup{(ii)} gives
$$
\frac{\#\{u\in\mathcal T^n:
\widehat G_{k_n}(\mathcal T_u^n)=1\}}{n\widehat\pi_n}
\longrightarrow1
\qquad\text{in probability}.
$$
The declumped count therefore tends to infinity in probability.
\end{proof}

For every finite tree $T$, let
$$
N_k(T)=\sum_{u\in T}G_k(T_u),
\qquad
\widehat{N}_k(T)=\sum_{u\in T}\widehat{G}_k(T_u)
$$
be respectively the raw and declumped counts. It will be useful to go from declumped counts to raw counts.

\begin{lemma}
\label{lem:declump_to_raw}
Assume that $\P(G_{k_n}(\mathcal T^n)=1)\to0$ and that the following
propagation to the root property holds: for every finite plane tree $T$, every
vertex $v\in T$, every child $u$ of $v$, and every $k$,
$$
G_k(T_u)=1
\quad\textrm{and}\quad
k_v(T)\notin B
\qquad\Longrightarrow\qquad
G_k(T_v)=1.
$$
Assume moreover that for a sequence $(\lambda_{n})$, $\dTV(\widehat{N}_{k_n}(\mathcal T^n),\Po(\lambda_n))\to0$.
Then $\P(N_{k_n}(\mathcal T^n)=0)-e^{-\lambda_n}\to0$.
\end{lemma}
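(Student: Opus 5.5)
We prove a deterministic comparison between the events $\{N_k(T)=0\}$ and $\{\widehat N_k(T)=0\}$, valid up to the event that the root itself is marked. The claim we aim for is this: for every finite plane tree $T$,
$$
\{N_k(T)\ge1\}\subseteq \{\widehat N_k(T)\ge1\}\cup\{G_k(T)=1\},
\qquad
\{\widehat N_k(T)\ge1\}\subseteq\{N_k(T)\ge1\}.
$$
The second inclusion is immediate. If $\widehat G_k(T_v)=1$ for some $v$, then $v$ has a child $u$ with $G_k(T_u)=1$, so $N_k(T)\ge1$.

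For the first inclusion we use the propagation property. Suppose $N_k(T)\ge1$ and $G_k(T)=0$. Choose a vertex $u$ with $G_k(T_u)=1$ of minimal depth. Since $G_k(T_\varnothing)=G_k(T)=0$, the vertex $u$ is not the root, so it has a parent $v$. If $k_v(T)\notin B$, the propagation property gives $G_k(T_v)=1$, which contradicts the minimality of the depth of $u$. Hence $k_v(T)\in B$, and $u\in\Gamma_v(T)$ is marked, so $\widehat G_k(T_v)=1$ and $\widehat N_k(T)\ge1$. (We could instead iterate upward from $u$ until the first ancestor with degree in $B$; the minimal-depth choice avoids any induction.)

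Applying both inclusions to $T=\mathcal T^n$ and $k=k_n$ gives
$$
\bigl|\P(N_{k_n}(\mathcal T^n)=0)-\P(\widehat N_{k_n}(\mathcal T^n)=0)\bigr|\le \P(G_{k_n}(\mathcal T^n)=1)\to0.
$$
By the total variation assumption, $|\P(\widehat N_{k_n}(\mathcal T^n)=0)-e^{-\lambda_n}|\le \dTV(\widehat N_{k_n}(\mathcal T^n),\Po(\lambda_n))\to0$. The triangle inequality then gives the conclusion.

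There is no real obstacle here. The only point needing care is that the propagation property must be applied to the parent of the shallowest marked vertex, so that the only failure case is the root being marked. That case is exactly the one controlled by the hypothesis $\P(G_{k_n}(\mathcal T^n)=1)\to0$, which in applications can be checked with Corollary \ref{cor:SM}(iii).
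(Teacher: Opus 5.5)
Your proposal is correct and follows essentially the same route as the paper: you prove the deterministic inclusion $\{N_k(T)>0,\ \widehat N_k(T)=0\}\subseteq\{G_k(T)=1\}$ by taking a marked vertex of minimal depth and applying the propagation property to its parent, then combine the hypothesis $\P(G_{k_n}(\mathcal T^n)=1)\to0$ with the total-variation comparison at $0$. Your pair of inclusions is just a restatement of the paper's single inclusion together with the trivial fact that $\widehat N_k(T)\ge1$ implies $N_k(T)\ge1$.
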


\begin{proof}
First observe that, for every finite tree $T$, $\widehat{N}_k(T)>0$ implies
$N_k(T)>0$. Hence $\{N_k(T)=0\}\subseteq\{\widehat{N}_k(T)=0\}$ and
$$
0\le
\P(\widehat{N}_k(\mathcal T^n)=0)-\P(N_k(\mathcal T^n)=0)
=
\P(N_k(\mathcal T^n)>0,\ \widehat{N}_k(\mathcal T^n)=0).
$$
We shall prove the deterministic inclusion
$$
\{N_k(T)>0,\ \widehat{N}_k(T)=0\}\subseteq\{G_k(T)=1\}.
$$

Let $T$ be a finite tree such that $N_k(T)>0$ and $\widehat{N}_k(T)=0$. Choose a vertex
$u\in T$ of minimal depth such that $G_k(T_u)=1$. We claim that
$u=\varnothing$. Indeed, suppose that $u\neq\varnothing$, and let $v$ be the
parent of $u$. Since $v$ has a child $u$ such that $G_k(T_u)=1$, the condition
$\widehat{N}_k(T)=0$ implies $\widehat{G}_k(T_v)=0$. Therefore necessarily $k_v(T)\notin B$.
By the propagation property, $G_k(T_v)=1$, contradicting the minimality of
$u$. Thus $u=\varnothing$, and hence $G_k(T)=1$.

Applying this inclusion with $T=\mathcal T^n$ and $k=k_n$, we get
$$
0\le
\P(\widehat{N}_{k_n}(\mathcal T^n)=0)-\P(N_{k_n}(\mathcal T^n)=0)
\le
\P(G_{k_n}(\mathcal T^n)=1).
$$
The right-hand side tends to $0$ by assumption. Therefore $
\P(N_{k_n}(\mathcal T^n)=0)-\P(\widehat{N}_{k_n}(\mathcal T^n)=0)\to0$.
Moreover,
$$
\left|\P(\widehat{N}_{k_n}(\mathcal T^n)=0)-e^{-\lambda_n}\right|
\le
\dTV(\widehat{N}_{k_n}(\mathcal T^n),\Po(\lambda_n))
\to0.
$$
It follows that $
\P(N_{k_n}(\mathcal T^n)=0)-e^{-\lambda_n}\to0$,
as desired.
\end{proof}

\begin{corollary}
\label{cor:declump_zero}
Assume the hypotheses of Theorem \ref{thm:PoissonApproximation2}, the
propagation property of Lemma \ref{lem:declump_to_raw}, and
$\pi_n=\P(G_{k_n}(\mathcal T)=1)=O(1/n)$. With the notation
$
N_{k_n}(T)=\sum_{u\in T}G_{k_n}(T_u)$
we have
$$
\P(N_{k_n}(\mathcal T^n)=0)-\exp(-c_Bn\pi_n)  \quad \mathop{\longrightarrow}_{n \rightarrow \infty} \quad 0.
$$
\end{corollary}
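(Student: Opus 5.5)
This follows by combining Theorem \ref{thm:PoissonApproximation2}(i), Corollary \ref{cor:SM}(iii) and Lemma \ref{lem:declump_to_raw}.

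\emph{Step 1: Poisson approximation for the declumped count.} Since $\pi_n=O(1/n)$, the sequence $(n\pi_n)$ is bounded. The hypotheses of Theorem \ref{thm:PoissonApproximation2} are in force: Assumption \ref{hyp:S} for $(G_k)$ and Assumption \ref{hyp:C} for $(\widehat G_k)$ with respect to the same cutoff. Part (i) of that theorem then gives
$$
\dTV\bigl(\widehat N_{k_n}(\mathcal T^n),\Po(c_Bn\pi_n)\bigr)\to0 .
$$
So the Poisson hypothesis of Lemma \ref{lem:declump_to_raw} holds with $\lambda_n=c_Bn\pi_n$.

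\emph{Step 2: the root of $\mathcal T^n$ is asymptotically unmarked.} Lemma \ref{lem:declump_to_raw} also requires $\P(G_{k_n}(\mathcal T^n)=1)\to0$. This is precisely Corollary \ref{cor:SM}(iii): it uses Assumption \ref{hyp:S} for $(G_k)$ and $\pi_n=O(1/n)$, both of which we have.

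\emph{Step 3: conclusion.} The propagation to the root property is assumed. Lemma \ref{lem:declump_to_raw} therefore applies and gives
$$
\P(N_{k_n}(\mathcal T^n)=0)-e^{-c_Bn\pi_n}\to0,
$$
which is the claim.

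No step here is a real obstacle; the argument only chains earlier results. The one point to check carefully is Step 2. The Poisson approximation of Step 1 controls only declumped witnesses. A raw cluster propagating all the way to the root of $\mathcal T^n$ would have no declumped witness, so it would be invisible in $\widehat N_{k_n}$. This case is ruled out only by the separate root estimate of Corollary \ref{cor:SM}(iii). That estimate in turn uses the structural decomposition from Assumption \ref{hyp:S} together with $\pi_n=O(1/n)$, not merely Assumption \ref{hyp:M}.
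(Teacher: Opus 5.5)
Your proposal is correct and matches the paper's proof. Theorem \ref{thm:PoissonApproximation2}(i) applies because $(n\pi_n)$ is bounded, and gives the Poisson approximation for $\widehat N_{k_n}(\mathcal T^n)$; Corollary \ref{cor:SM}(iii) gives $\P(G_{k_n}(\mathcal T^n)=1)\to0$; and Lemma \ref{lem:declump_to_raw} then compares the two zero events.
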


\begin{proof}
Theorem \ref{thm:PoissonApproximation2} gives the required Poisson
approximation for the declumped count. Corollary
\ref{cor:SM}\textup{(iii)} gives
$\P(G_{k_n}(\mathcal T^n)=1)\to0$, and Lemma
\ref{lem:declump_to_raw} then compares the two zero events.
\end{proof}

For future use, we finally state the following very simple result.

\begin{lemma}
\label{lem:ineqs}
Let $H_n$ be integer-valued and let $N_n(h),L_n(h)$ be nonnegative random
variables such that
$$
\{H_n\ge h\}=\{N_n(h)\ge1\},
\qquad
\{L_n(h)\ge1\}\subseteq\{H_n\ge h\}.
$$
If $\E[N_n(h_n)]\to0$, then $\P(H_n\ge h_n)\to0$. If
$L_n(h_n)\to\infty$ in probability, then $\P(H_n\ge h_n)\to1$.
\end{lemma}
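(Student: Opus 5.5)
Both assertions follow from two elementary bounds, one for each direction. Throughout, $h=h_n$ is fixed along the sequence.

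\emph{Upper bound.} By hypothesis $\{H_n\ge h_n\}=\{N_n(h_n)\ge1\}$. Since $N_n(h_n)$ is nonnegative, Markov's inequality gives
$$
\P(H_n\ge h_n)=\P\bigl(N_n(h_n)\ge1\bigr)\le \E[N_n(h_n)].
$$
If $\E[N_n(h_n)]\to0$, the right-hand side tends to $0$, which proves the first assertion.

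\emph{Lower bound.} By hypothesis $\{L_n(h_n)\ge1\}\subseteq\{H_n\ge h_n\}$, so
$$
\P(H_n\ge h_n)\ge \P\bigl(L_n(h_n)\ge1\bigr).
$$
If $L_n(h_n)\to\infty$ in probability, then $\P(L_n(h_n)\ge1)\to1$, since the event $\{L_n(h_n)\ge1\}$ eventually contains $\{L_n(h_n)> K\}$ for any fixed $K\ge1$. Hence $\P(H_n\ge h_n)\to1$.

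Neither step is a real obstacle; the only care needed is to note that integrality of $H_n$ plays no role beyond making the events well defined. In the applications, the role of the lemma is to package the first-moment upper bound for raw counts (Corollary \ref{cor:SM}\textup{(ii)}) together with the divergent-regime lower bound for declumped counts (Theorem \ref{thm:PoissonApproximation2}\textup{(ii)}), so that $N_n(h)$ will be the raw count and $L_n(h)$ the declumped count.
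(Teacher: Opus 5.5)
Your proof is correct and is the obvious intended argument; the paper states this lemma without proof as a ``very simple result''. You use Markov's inequality $\P(N_n(h_n)\ge1)\le\E[N_n(h_n)]$ for the first claim, and the inclusion $\{L_n(h_n)\ge1\}\subseteq\{H_n\ge h_n\}$ with $\P(L_n(h_n)\ge1)\ge\P(L_n(h_n)>1)\to1$ for the second. One aside on your closing remark: in the complete $r$-ary application with $r\ge2$, the paper takes $L_n(h)=N_n(h)=N_h(\mathcal T^n)$ (the raw count, which diverges by Corollary~\ref{cor:Non-fringe Poisson}\textup{(ii)}), and uses the declumped count as $L_n$ only in the leaf-height application.
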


 %==================================================================
\section{Complete $r$-ary non-fringe subtrees}
\label{sec:rary}
%==================================================================

In this section we establish Theorem \ref{thm:max_string} using the Poisson
approximation framework; declumping is needed only for the unary case. Recall
that $T_u$ denotes the fringe subtree of $T$ at $u$, that we write
$T'\prec T$ if $T$ can be obtained from $T'$ by grafting trees onto the leaves
of $T'$, and that $H_{r,k}$ denotes the complete $r$-ary tree of height $k$ (with all leaves at graph distance $k$ from the root).
We assume throughout that $\mu_r>0$ and set
$$
  G_k(T)=\mathbbm{1}_{\{H_{r,k}\prec T\}},
  \qquad
  \widehat{G}_k(T)
  =
  \mathbbm{1}_{\{k_\varnothing(T)\neq r\}}
  \mathbbm{1}_{\{\exists u\in\Gamma_\varnothing(T):G_k(T_u)=1\}}.
$$
We set
$$
  \pi_k=\P(G_k(\mathcal T)=1),
  \qquad
  \widehat{\pi}_k=\P(\widehat{G}_k(\mathcal T)=1).
$$
Observe that
$$
  \pi_k=\mu_1^k
  \quad \textrm{if } r=1,
  \qquad
  \pi_k=\mu_r^{(r^k-1)/(r-1)}
  \quad \textrm{if } r\ge2.
$$
Finally, let
$$
  N_k(T)=\sum_{u\in T}G_k(T_u),
  \qquad
  \widehat{N}_k(T)=\sum_{u\in T}\widehat{G}_k(T_u)
$$
be respectively the raw and declumped counts.

\subsection{A cut-and-graft bound}

Our two applications below require uniform bounds on the probability of two
nested marked occurrences in order to check assumption \ref{hyp:C}. The following cut-and-graft lemma separates the
descendant fringe subtree from the remainder of the tree and reduces this
two-point estimate to a one-point conditional probability.

  Fix $\ell\ge1$. For a finite tree $T$ with $|T|\ge \ell+1$, set $u=u_{\ell+1}(T)$ and
  define $\operatorname{Cut}_\ell(T)$ as the marked tree obtained from $T$ by
  replacing the fringe subtree $T_u$ by a single marked leaf, denoted by
  $\star$. Conversely, if $A$ is such a marked tree and $S$ is a finite plane
  tree, write $A[S]$ for the tree obtained by grafting $S$ at the marked leaf.
  Observe that the decomposition $T=A[S]$ is unique once the marked leaf is
  fixed.  In particular $\operatorname{Cut}_\ell(A[S])=A$, and the fringe subtree of $A[S]$ rooted
at its $(\ell+1)$-st vertex is $S$.
  
  Let
    $$
    \mathscr A_\ell=\{\operatorname{Cut}_\ell(T): |T|\ge \ell+1\}
  $$
  denote  the family of marked trees obtained in this way. See Figure \ref{fig:grafting_binary} for an application of $\operatorname{Cut}_\ell$.

\begin{lemma}[Cut-and-graft bound]
\label{lem:cut_graft}
Let $\mathcal E$ be an event on finite plane trees, and let
$(\mathcal Q_m)_{m\in I}$ be a finite family of events with
$\P(\mathcal T\in\mathcal Q_m)>0$. Assume that there exist a set
$\mathscr R\subseteq\mathscr A_\ell$ and a map
$\tau:\mathscr R\to I$ such that:
\begin{enumerate}
\item[(a)] for every $A\in\mathscr R$ and every
$S'\in\mathcal Q_{\tau(A)}$, one has $A[S']\in\mathcal E$;
\item[(b)] whenever $|T|\ge\ell+1$, $T\in\mathcal E$, and
$T_{u_{\ell+1}(T)}\in\mathcal E$, one has
$
\operatorname{Cut}_\ell(T)\in\mathscr R$
and
$T_{u_{\ell+1}(T)}
\in
\mathcal Q_{\tau(\operatorname{Cut}_\ell(T))}$.
\end{enumerate}
Then
$$\P\bigl(
|\mathcal T|\ge\ell+1,\
\mathcal T\in\mathcal E,\
\mathcal T_{u_{\ell+1}(\mathcal T)}\in\mathcal E
\bigr)
\leq 
\P(\mathcal T\in\mathcal E)
\max_{m\in I}
\P\bigl(
\mathcal T\in\mathcal E
\mid
\mathcal T\in\mathcal Q_m
\bigr).
$$
\end{lemma}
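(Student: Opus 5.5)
The plan is to decompose the Bienaymé law along the cut-and-graft bijection and then sum over the cut skeletons. The key fact is that the decomposition $T=A[S]$, with $A=\operatorname{Cut}_\ell(T)\in\mathscr A_\ell$, is a bijection between trees with $|T|\ge\ell+1$ and pairs $(A,S)$ with $A\in\mathscr A_\ell$ and $S$ arbitrary. For the Bienaymé weight, write $w(A)=\prod_{v\in A,\,v\neq\star}\mu_{k_v(A)}$. Then
$$\P(\mathcal T=A[S])=w(A)\,\P(\mathcal T=S),$$
because the vertices of $A[S]$ are those of $A$ other than $\star$ together with those of $S$, and outdegrees are preserved. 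One should check that $\operatorname{Cut}_\ell(A[S])=A$ for every finite $S$. This holds because the vertices $u_1,\dots,u_\ell$ in DFS order precede $\star$ and do not depend on $S$, so $\star$ is always the $(\ell+1)$-st vertex of $A[S]$. This is exactly the observation recorded before the lemma.

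The first step uses (b) to bound the two-point probability:
$$\P(|\mathcal T|\ge\ell+1,\ \mathcal T\in\mathcal E,\ \mathcal T_{u_{\ell+1}}\in\mathcal E)\le\sum_{A\in\mathscr R}\sum_{S\in\mathcal E\cap\mathcal Q_{\tau(A)}}w(A)\P(\mathcal T=S)=\sum_{A\in\mathscr R}w(A)\,\P(\mathcal T\in\mathcal E\cap\mathcal Q_{\tau(A)}).$$
Condition (b) is what guarantees that each pair contributing on the left has $A\in\mathscr R$ and $S\in\mathcal Q_{\tau(A)}\cap\mathcal E$.

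Next, I would factor out the conditional probability. Write $\P(\mathcal T\in\mathcal E\cap\mathcal Q_m)=\P(\mathcal T\in\mathcal E\mid\mathcal T\in\mathcal Q_m)\,\P(\mathcal T\in\mathcal Q_m)$. Bounding the first factor by its maximum over $m\in I$, the sum is at most
$$\max_m\P(\mathcal T\in\mathcal E\mid\mathcal Q_m)\sum_{A\in\mathscr R}w(A)\P(\mathcal T\in\mathcal Q_{\tau(A)}).$$
The finiteness of $I$ and the positivity of $\P(\mathcal T\in\mathcal Q_m)$ ensure the maximum is well defined.

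The final step uses (a). The remaining sum equals $\P(|\mathcal T|\ge\ell+1,\ \operatorname{Cut}_\ell(\mathcal T)\in\mathscr R,\ \mathcal T_{u_{\ell+1}}\in\mathcal Q_{\tau(\operatorname{Cut}_\ell(\mathcal T))})$, by the same bijection run in reverse. By (a), every such tree $A[S']$ lies in $\mathcal E$, so this probability is at most $\P(\mathcal T\in\mathcal E)$, which gives the claim.

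The only delicate point is the product-weight identity together with the injectivity and surjectivity of $(A,S)\mapsto A[S]$ on $\mathscr A_\ell\times\{\text{finite trees}\}$. This includes the fact that the $\star$ leaf carries no $\mu_0$ factor. Once that is in place, the rest is bookkeeping. All sums are of nonnegative terms, so Tonelli justifies exchanging them.
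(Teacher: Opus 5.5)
Your proposal is correct and follows essentially the same route as the paper. It rests on the product-weight identity $\P(\mathcal T=A[S])=w(A)\P(\mathcal T=S)$. Condition (b) bounds the two-point probability by $\sum_{A\in\mathscr R}w(A)\P(\mathcal T\in\mathcal E\cap\mathcal Q_{\tau(A)})$, and condition (a), together with injectivity of $(A,S)\mapsto A[S]$, bounds $\sum_{A\in\mathscr R}w(A)\P(\mathcal T\in\mathcal Q_{\tau(A)})$ by $\P(\mathcal T\in\mathcal E)$. The only difference is that the paper groups the skeletons by $m=\tau(A)$ into weights $W_m$ before extracting the maximal ratio, which is the same bookkeeping.
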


The lemma separates two nested occurrences by cutting at the root of the
second one. Writing the resulting decomposition as $T=A[S]$, the cut tree
$A$ retains the information responsible for the occurrence at the root,
whereas $S$ carries the descendant occurrence. 
The events $\mathcal Q_m$ encode the minimal condition on $S$ needed to
preserve the root occurrence after grafting. For complete $r$-ary subtrees no condition on $S$ is needed for declumped occurences, while for
leaf-height $\mathcal Q_m$ requires root leaf-height at least $m$.

\begin{proof}[Proof of Lemma \ref{lem:cut_graft}]
For $A\in\mathscr A_\ell$, set
$$
w(A)
=
\prod_{v\in A,\ v\neq\star}\mu_{k_v(A)}.
$$
The Bienaym\'e product formula gives $
\P(\mathcal T=A[S])
=
w(A)\P(\mathcal T=S)$.

For $m\in I$, set
$$
\mathscr R_m
=
\{A\in\mathscr R:\tau(A)=m\},
\qquad
W_m
=
\sum_{A\in\mathscr R_m}w(A).
$$
By condition \textup{(b)} and the cut-and-graft factorization, the probability
in the statement is at most
$$
\sum_{m\in I}
W_m
\P(\mathcal T\in\mathcal E\cap\mathcal Q_m).
$$

On the other hand, the families $\mathscr R_m$ are disjoint. By condition
\textup{(a)} and uniqueness of the cut-and-graft decomposition, all trees
$A[S]$ with $A\in\mathscr R_m$ and $S\in\mathcal Q_m$ belong to
$\mathcal E$, and no tree is counted twice. Therefore
$$
\P(\mathcal T\in\mathcal E)
\ge
\sum_{m\in I}
W_m\P(\mathcal T\in\mathcal Q_m).
$$
Consequently,
\begin{align*}
\sum_{m\in I}
W_m\P(\mathcal T\in\mathcal E\cap\mathcal Q_m)
&\le
\max_{m\in I}
\frac{
\P(\mathcal T\in\mathcal E\cap\mathcal Q_m)
}{
\P(\mathcal T\in\mathcal Q_m)
}
\sum_{m\in I}
W_m\P(\mathcal T\in\mathcal Q_m)
\\
&\le
\P(\mathcal T\in\mathcal E)
\max_{m\in I}
\P\bigl(
\mathcal T\in\mathcal E
\mid
\mathcal T\in\mathcal Q_m
\bigr).
\end{align*}
This completes the proof.
\end{proof}

\subsection{Checking \ref{hyp:S} and \ref{hyp:C}}

Let
$$
  s_{r,k}
  =
  \begin{cases}
    k, & r=1,\\
    (r^k-1)/(r-1), & r\ge2,
  \end{cases}
  \qquad
  \ell_{r,k}
  =
  \begin{cases}
    1, & r=1,\\
    r^k, & r\ge2,
  \end{cases}
$$
so that $s_{r,k}$ is the number of internal vertices of $H_{r,k}$ and
$\ell_{r,k}$ is the number of leaves of $H_{r,k}$.

\begin{lemma}
  \label{lem:S_rary}
  Let $(k_n)$ be such that $s_{r,k_n}+\ell_{r,k_n}=n^{o(1)}$. Fix $\eta\in(0,1-1/\alpha)$ and set $M_n=\lfloor n^{1-\eta}\rfloor$. Then:
  \begin{enumerate}
    \item[(i)] $(G_k)$ satisfies Assumption \ref{hyp:S} along $(k_{n})$ with respect to $(M_n)$;
    \item[(ii)] if $n\pi_{k_n}\to\infty$, then $(G_k)$ satisfies Assumption
    \ref{hyp:C} along $(k_{n})$ with respect to $(M_n)$;
    \item[(iii)] $\E[N_{k_n}(\mathcal T^n)]\sim n\pi_{k_n}$.
  \end{enumerate}
\end{lemma}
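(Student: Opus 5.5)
For (i), we use the natural skeleton decomposition and Lemma \ref{lem:critM}. Conditionally on $H_{r,k}\prec\mathcal T$, the branching property shows that $\mathcal T$ is $H_{r,k}$ with $\ell_{r,k}$ independent Bienaym\'e trees grafted onto its leaves. Indeed, the event only fixes the outdegrees of the $s_{r,k}$ internal vertices, each equal to $r$. So we may take the deterministic values $C_k=s_{r,k}$ and $L_k=\ell_{r,k}$. The hypothesis $s_{r,k_n}+\ell_{r,k_n}=n^{o(1)}$ is then exactly the condition of Lemma \ref{lem:critM}, which yields Assumption \ref{hyp:S} with $M_n=\lfloor n^{1-\eta}\rfloor$. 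Part (iii) then follows at once from Corollary \ref{cor:SM}(ii).

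For (ii), we bound $A_n(\ell)$ uniformly in $\ell$ using the cut-and-graft bound, Lemma \ref{lem:cut_graft}. Take $\mathcal E=\{H_{r,k_n}\prec\cdot\}$ and let $\mathscr R$ be the set of cut trees $A=\operatorname{Cut}_\ell(T)$ arising from $T\in\mathcal E$ with $T_{u_{\ell+1}(T)}\in\mathcal E$. The point is that the root occurrence of $H_{r,k}$ in $T$ only constrains the outdegrees of the top $s_{r,k}$ vertices. Two cases arise according to the position of the $(\ell+1)$-th vertex.

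\emph{Case 1: $u_{\ell+1}$ is not one of these constrained internal vertices.} Then $u_{\ell+1}$ is a descendant of a leaf of the copy of $H_{r,k}$, or is itself such a leaf. Cutting there leaves the root occurrence intact for any grafted $S$. So we may choose a single event $\mathcal Q=$ "all trees", and the lemma gives $A_n(\ell)\le\pi_{k_n}\cdot\pi_{k_n}=\pi_{k_n}^2$.

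\emph{Case 2: $u_{\ell+1}$ lies at depth $d<k_n$ inside the skeleton.} Preserving the root occurrence then requires $H_{r,k_n-d}\prec S$, while the descendant occurrence requires $H_{r,k_n}\prec S$, which is stronger. To handle this, we split $\mathscr R$ according to the depth $d$ (or simply Case 1 versus Case 2) and take $\mathcal Q_d=\{H_{r,k_n-d}\prec\cdot\}$. Then
\[
\P(\mathcal E\mid\mathcal Q_d)=\pi_{k_n}/\pi_{k_n-d}\le 1.
\]
This alone is too weak, so in this case I would instead bound directly. The joint event forces the union of two overlapping complete trees, and its number of constrained vertices is at least $s_{r,k_n}+s_{r,k_n}-s_{r,k_n-d}$ for $r\ge2$, and at least $k_n+d$ for $r=1$. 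This gives $A_n(\ell)\le \pi_{k_n}\,\mu_r^{s_{r,k_n}-s_{r,k_n-d}}$.

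Summing $A_n(\ell)$ over $\ell\le M_n$ then gives two contributions. Case 1 contributes at most $M_n\pi_{k_n}^2=o(n\pi_{k_n}^2)$. For Case 2, there are at most $s_{r,k_n}=n^{o(1)}$ values of $\ell$ pointing inside the skeleton, each contributing at most $\pi_{k_n}\mu_r$ or less. So this part is at most $n^{o(1)}\pi_{k_n}$, and we need it to be $o(n\pi_{k_n}^2)$, that is, $n\pi_{k_n}\gg n^{o(1)}$.

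This last requirement is the main obstacle, since $n\pi_{k_n}\to\infty$ alone does not give it. I would refine the Case 2 bound to capture the exact overlap saving. For $r\ge2$, the saving $\mu_r^{s_{r,k_n}-s_{r,k_n-d}}$ is at least of order $\mu_r^{r^{k_n-1}}\approx\pi_{k_n}^{(r-1)/r}$, summed over $O(r^{k_n})$ positions. Since $\pi_{k_n}\to0$ at a double-exponential rate in $k_n$, this should be $o(n\pi_{k_n}^2)$ once we use $n\pi_{k_n}\to\infty$ together with $r^{k_n}=n^{o(1)}$. The unary case $r=1$ is the delicate one: there Case 2 sums to $\sum_{d\ge1}\mu_1^{k_n+d}\asymp\pi_{k_n}$, and we need $\pi_{k_n}=o(n\pi_{k_n}^2)$, which is exactly $n\pi_{k_n}\to\infty$. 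So the hypothesis is used sharply in the unary case.
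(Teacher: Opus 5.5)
Your proposal is correct and is essentially the paper's proof. Parts (i) and (iii) come from the deterministic skeleton decomposition $C_k=s_{r,k}$, $L_k=\ell_{r,k}$ together with Lemma~\ref{lem:critM} and Corollary~\ref{cor:SM}(ii). Part (ii) comes from splitting according to whether $u_{\ell+1}$ is an internal vertex of the displayed $H_{r,k_n}$: the overlap count over skeleton positions gives $O(\pi_{k_n})$, and all other positions give at most $M_n\pi_{k_n}^2$. You obtain the latter bound from Lemma~\ref{lem:cut_graft}, while the paper uses a union bound over DFS positions of the grafted forest.

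One correction: the cut-and-graft bound $\pi_{k_n}^2/\pi_{k_n-d}$ is not ``too weak'', since it coincides with your direct bound $\pi_{k_n}\mu_r^{s_{r,k_n}-s_{r,k_n-d}}$. Moreover $\sup_k k\,r^k\mu_r^{r^{k-1}}<\infty$, so the skeleton contribution is $O(\pi_{k_n})$ for every $r$. Hence $n\pi_{k_n}\to\infty$ alone closes the argument, and the hedging in the case $r\ge2$ is unnecessary.
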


\begin{proof}
  On the event $G_k(\mathcal T)=1$, the tree $\mathcal T$ is obtained from
  the complete $r$-ary tree $H_{r,k}$ by grafting independent Bienaym\'e
  trees on the $\ell_{r,k}$ leaves of $H_{r,k}$. Thus, under
  $\P_k=\P(\,\cdot\mid G_k(\mathcal T)=1)$, we may take
  $C_k=s_{r,k}$ and $L_k=\ell_{r,k}$, and construct copies $\mathcal T^{(1)},\mathcal T^{(2)},\ldots$ which,
conditionally on $(C_k,L_k)$, are independent and each have the same law
as $\mathcal T$ such that
  $$
    |\mathcal T|
    =
    C_k+\sum_{j=1}^{L_k}|\mathcal T^{(j)}|.
  $$
  Since $C_{k_n}+L_{k_n}=s_{r,k_n}+\ell_{r,k_n}=n^{o(1)}$, the fact that  $(G_k)$ satisfies Assumption \ref{hyp:S} along $(k_{n})$ with respect to $(M_n)$ follows from Lemma \ref{lem:critM}.

We now prove \textup{(ii)}. Assume that $n\pi_{k_n}\to\infty$. Let
$A_n(\ell)$ be the quantities defining Assumption \ref{hyp:C}. We shall
prove that
$$
\sum_{\ell=1}^{M_n}A_n(\ell)=o(n\pi_{k_n}^2).
$$
  By conditioning on $G_{k_n}(\mathcal T)=1$, we have
  $$
    \sum_{\ell=1}^{M_{n}}A_n(\ell)
    =
    \pi_{k_n}
   \E_{k_n}\left[
      \sum_{\ell=1}^{M_{n}}
      \mathbbm{1}_{\{|\mathcal T|\ge \ell+1\}}
      G_{k_n}(\mathcal T_{u_{\ell+1}(\mathcal T)})
    \right].
  $$
  Under $\P_{k_n}$, use the canonical displayed copy of $H_{r,k_n}$
  at the root. We split the possible descendants counted in the last display
  according to whether their roots belong to this displayed skeleton or to one
  of the Bienaym\'e trees grafted onto its leaves.

First consider roots belonging to the displayed skeleton. If $r=1$, the
vertices of the displayed unary chain occur consecutively in depth-first
order. Hence, whenever $u_{\ell+1}(\mathcal T)$ belongs to this chain, it is
the vertex at graph distance $\ell$ from the root, with
$1\le \ell\le k_n$. The displayed chain already provides a unary
continuation of length $k_n-\ell$ from this vertex. For it to satisfy
$G_{k_n}$, the tree grafted at the terminal leaf must therefore provide
$\ell$ additional unary edges. This has probability
$\pi_\ell=\mu_1^\ell$. Consequently, the expected contribution of the
vertices belonging to the displayed chain is at most
$$
\sum_{\ell=1}^{M_n\wedge k_n}\mu_1^\ell
\le
\sum_{\ell\ge1}\mu_1^\ell
\le C.
$$

  If $r\ge2$, consider a vertex $v$ at graph distance
  $a\in\{1,\ldots,k_n-1\}$ from the root in the displayed copy of $H_{r,k_n}$.
  The displayed skeleton already provides a complete $r$-ary continuation of
  height $k_n-a$ from $v$, with $r^{k_n-a}$ terminal leaves. For $v$ to satisfy
  $G_{k_n}$, each of these terminal leaves must support an additional complete
  $r$-ary tree of height $a$. Since the grafted subtrees are independent, this
  has probability
  $$
    \pi_a^{r^{k_n-a}}
    =
    \frac{\pi_{k_n}}{\pi_{k_n-a}}.
  $$
  The leaves of the displayed skeleton contribute $r^{k_n}\pi_{k_n}$. Hence
  the expected number of counted descendants whose roots belong to the
  displayed skeleton is bounded by
  $$
    \sum_{a=1}^{k_n-1}r^a\,\frac{\pi_{k_n}}{\pi_{k_n-a}}
    +
    r^{k_n}\pi_{k_n}.
  $$
  Since $\pi_k=\mu_r^{(r^k-1)/(r-1)}$ and
  $(r^{k_n}-r^{k_n-a})/(r-1)\ge r^{k_n-1}$ for
  $1\le a\le k_n-1$, this last quantity is bounded by
  $$
    k_n r^{k_n}\mu_r^{r^{k_n-1}}+r^{k_n}\pi_{k_n},
  $$
  and is therefore uniformly bounded in $n$.

  It remains to consider roots lying in the forest grafted on the leaves of the
  displayed skeleton. Embed this forest in an infinite sequence of independent
  Bienaym\'e trees. For each deterministic DFS position in this infinite
  forest, the event that the corresponding fringe subtree exists in the
  grafted forest and has $G_{k_n}$-value $1$ is contained in an event of
  probability $\pi_{k_n}$. Thus a union bound over the first $M_{n}$ possible
  positions gives an expected contribution at most $M_{n}\pi_{k_n}$.

  Consequently,
  $$
    \sum_{\ell=1}^{M_{n}}A_n(\ell)
    \le
    C\pi_{k_n}+M_{n}\pi_{k_n}^2.
  $$
Since $n\pi_{k_n}\to\infty$ and $M_n=o(n)$, the right-hand side is
$o(n\pi_{k_n}^2)$. This proves Assumption \ref{hyp:C} with respect to
$(M_n)$.

Finally, \textup{(iii)} follows from Corollary
\ref{cor:SM}\textup{(ii)}, applied with the same cutoff sequence $(M_n)$.
\end{proof}

We next check Assumption \ref{hyp:C} for the declumped functions
$\widehat{G}_k$.

\begin{lemma}
  \label{lem:C_rary}
  For every sequence $(k_n)$, the functions $(\widehat{G}_k)$ satisfy
  Assumption \ref{hyp:C} with respect to every cutoff sequence $(M_n)$ satisfying $M_n=o(n)$.
\end{lemma}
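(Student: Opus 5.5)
Our goal is to show that, uniformly in $\ell\ge1$, the two-point quantity for the declumped indicators satisfies
$$
\widehat A_n(\ell)=\P\bigl(|\mathcal T|\ge\ell+1,\ \widehat G_{k_n}(\mathcal T)=1,\ \widehat G_{k_n}(\mathcal T_{u_{\ell+1}(\mathcal T)})=1\bigr)\le C\,\widehat\pi_{k_n}^2 ,
$$
for a constant $C$ not depending on $n$ or $\ell$. Once this holds, $\sum_{\ell=1}^{M_n}\widehat A_n(\ell)\le CM_n\widehat\pi_{k_n}^2=o(n\widehat\pi_{k_n}^2)$ for any cutoff with $M_n=o(n)$, which is exactly Assumption \ref{hyp:C} for $(\widehat G_k)$. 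Note that the Assumption must be checked with $\P(\widehat G_{k_n}(\mathcal T)=1)=\widehat\pi_{k_n}$ in place of $\pi_{k_n}$. By Lemma \ref{lem:root_declumping} we have $b_B\pi_k\le\widehat\pi_k\le c_B\pi_k$ with $B=\mathbb Z_+\setminus\{r\}$, so it is equivalent to prove $\widehat A_n(\ell)\le C\pi_{k_n}^2$.

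To obtain this bound we apply the cut-and-graft Lemma \ref{lem:cut_graft} with $\mathcal E=\{\widehat G_{k_n}=1\}$. We take a single conditioning event, $I=\{1\}$ and $\mathcal Q_1=$ all finite trees, together with $\mathscr R=\{\operatorname{Cut}_\ell(T): T\in\mathcal E,\ T_{u_{\ell+1}(T)}\in\mathcal E\}$. With these choices, condition (b) holds trivially, and the lemma gives $\widehat A_n(\ell)\le\widehat\pi_{k_n}\cdot\P(\widehat G_{k_n}(\mathcal T)=1)=\widehat\pi_{k_n}^2$. This matches the remark after the lemma that no condition on $S$ is needed for complete $r$-ary declumped occurrences.

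The substance of the proof is therefore condition (a). We must show that if $T=A[S]$ with both the root and $u=u_{\ell+1}(T)$ being declumped occurrences, then $A[S']\in\mathcal E$ for every $S'$. The key structural claim is that a declumped witness at the root cannot use the fringe subtree at a descendant which is itself a declumped witness. Suppose the root $\varnothing$ has $k_\varnothing\neq r$ and a child $w$ with $H_{r,k}\prec T_w$. We argue that the displayed copy of $H_{r,k}$ at $w$ cannot contain $u$ as an internal vertex, since internal vertices of that copy have exactly $r$ children, while $k_u(T)\neq r$. Hence $u$ is either outside the displayed copy, or at one of its leaves, or below one. If $u$ lies outside $T_w$, or strictly below a leaf of the displayed copy, or at such a leaf, then replacing $S=T_u$ by any $S'$ leaves the displayed $H_{r,k}$ at $w$ intact. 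Indeed, grafting at a leaf does not affect $H_{r,k}\prec T_w$, and the root's degree is unchanged because $u\neq\varnothing$. So $A[S']\in\mathcal E$.

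The main obstacle is the choice of the displayed copy. The relation $H_{r,k}\prec T_w$ requires $k_x=r$ at every internal vertex of the canonical top $k$ levels of $T_w$, so this copy is unique, and the degree argument above applies to it. One must also handle the case where the marked child $w$ of the root is $u$ itself or an ancestor of $u$. If $w$ is an ancestor of $u$, then $u$ cannot be internal, by degrees, so $u$ sits at depth $\ge k$ below $w$ and the argument above applies. If $w=u$, we need $k_\varnothing\neq r$ and $u$ being a child of the root; then $H_{r,k}\prec S$ may fail for $S'$. To avoid this case, we restrict $\mathscr R$ to those cut trees in which some child of the root other than the $\star$-position already carries a full $H_{r,k}$. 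We bound the remaining configurations, where $u$ is the only marked child, separately. That contribution is at most $\P(\widehat G(\mathcal T_u)=1,\ u\text{ child of root},\ k_\varnothing\neq r)$, which the branching property controls by $\widehat\pi\cdot\pi\cdot\P(\text{root has a child at DFS position }\ell+1)$. Alternatively, we can apply Lemma \ref{lem:cut_graft} a second time with $\mathcal Q_1=\{H_{r,k_n}\prec\cdot\}$, so that the conditional probability is at most $1$ and the weight $W_1\le\P(\text{root-degree condition})\,\pi$. Either route gives $O(\pi_{k_n}^2)$, completing the proof.
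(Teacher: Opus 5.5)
Your core argument is the paper's. You apply Lemma~\ref{lem:cut_graft} with a single trivial conditioning event, and you use the degree condition $k_u(\mathcal T)\neq r$ to show that $u=u_{\ell+1}(\mathcal T)$ cannot be an internal vertex of the displayed copy of $H_{r,k}$, so regrafting at $u$ preserves the root witness. The paper takes $\mathscr R$ to be all universally good cut trees and verifies (b); you take the cuts of double-event trees and verify (a). That is the same check. The lemma gives $\widehat\pi_{k_n}^2$ directly, so the detour through $\pi_{k_n}^2$ via Lemma~\ref{lem:root_declumping} is unnecessary; it also needs $c_B=1-r\mu_r>0$.

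Your ``main obstacle'' paragraph, however, is mistaken, and the patch it introduces is not valid as written.

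\begin{itemize}
\item[--] \textbf{The case $w=u$ cannot occur.} For $k\ge1$, the marked child $w$ is the root of the displayed $H_{r,k}$. It is therefore an internal vertex with $k_w(\mathcal T)=r$, which contradicts $k_u(\mathcal T)\neq r$. This is exactly your own degree argument. For $k=0$, $H_{r,0}\prec S'$ holds for every $S'$, so regrafting is harmless. Your first route, paragraphs two and three, is therefore already complete.
\item[--] \textbf{The branching-property route fails.} The displayed probability $\mathbb{P}(\widehat G(\mathcal T_u)=1,\ u \text{ child of root},\ k_\varnothing\neq r)$ factorises only as $\widehat\pi$ times an $O(1)$ quantity. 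It equals $b_B\widehat\pi$ when $\ell=1$, which is not $O(\pi^2)$. The extra factor $\pi$ could only come from also imposing $H_{r,k}\prec\mathcal T_u$. That event concerns the same subtree $\mathcal T_u$, so no product bound follows; here the intersection is simply empty.
\item[--] \textbf{The second cut-and-graft route fails.} Bounding the conditional probability by $1$ gives only $O(\widehat\pi_{k_n})$. The weight $W_1$ carries no factor $\pi$, because in that configuration the cut tree contains no copy of $H_{r,k}$.
\end{itemize}

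Delete the restriction of $\mathscr R$ and the separate bound, and replace them with the one-line observation that $w\neq u$.
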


\begin{proof}
Fix $k,\ell$ and use Lemma \ref{lem:cut_graft} with
$\mathcal E=\{T:\widehat G_k(T)=1\}$ and $\mathcal Q_0$ equal to the whole
tree space.  Let $\mathscr R$ be the  set of all cut trees $A$ such that
$A[S']\in\mathcal E$ for every tree $S'$, and set $\tau(A)=0$.  On the
double event, put $u=u_{\ell+1}(\mathcal T)$ and choose
a root child $c$ with $H_{r,k}\prec\mathcal T_c$.  If
$u\notin\mathcal T_c$, re-grafting at $u$ does not affect this witness.  If
$u\in\mathcal T_c$, then $k_u(\mathcal T)\ne r$ because
$\widehat G_k(\mathcal T_u)=1$; hence $u$ is not an internal vertex of the
displayed $H_{r,k}$.  It is therefore a leaf of that copy or lies below one,
and arbitrary re-grafting at $u$ again preserves the root witness.  Thus the
cut tree belongs to $\mathscr R$, so Lemma \ref{lem:cut_graft}
gives, uniformly in $\ell$,
$$
\P\bigl(|\mathcal T|\ge\ell+1,\ \widehat G_k(\mathcal T)=1,\
\widehat G_k(\mathcal T_{u_{\ell+1}(\mathcal T)})=1\bigr)
\le\widehat\pi_k^2.
$$
Thus
  $$ \sum_{\ell=1}^{M_n}  \P\bigl(|\mathcal T|\ge\ell+1,\ \widehat G_{k_{n}}(\mathcal T)=1,\
\widehat G_{k_{n}}(\mathcal T_{u_{\ell+1}(\mathcal T)})=1\bigr)
\le M_{n}\widehat\pi_{k_{n}}^2= o \left(n \widehat \pi_{k_{n}}^2 \right)$$
since $M_n=o(n)$, which is Assumption \ref{hyp:C}.
\end{proof}

\begin{figure}[h]
    \centering
    \includegraphics[width=0.8\linewidth]{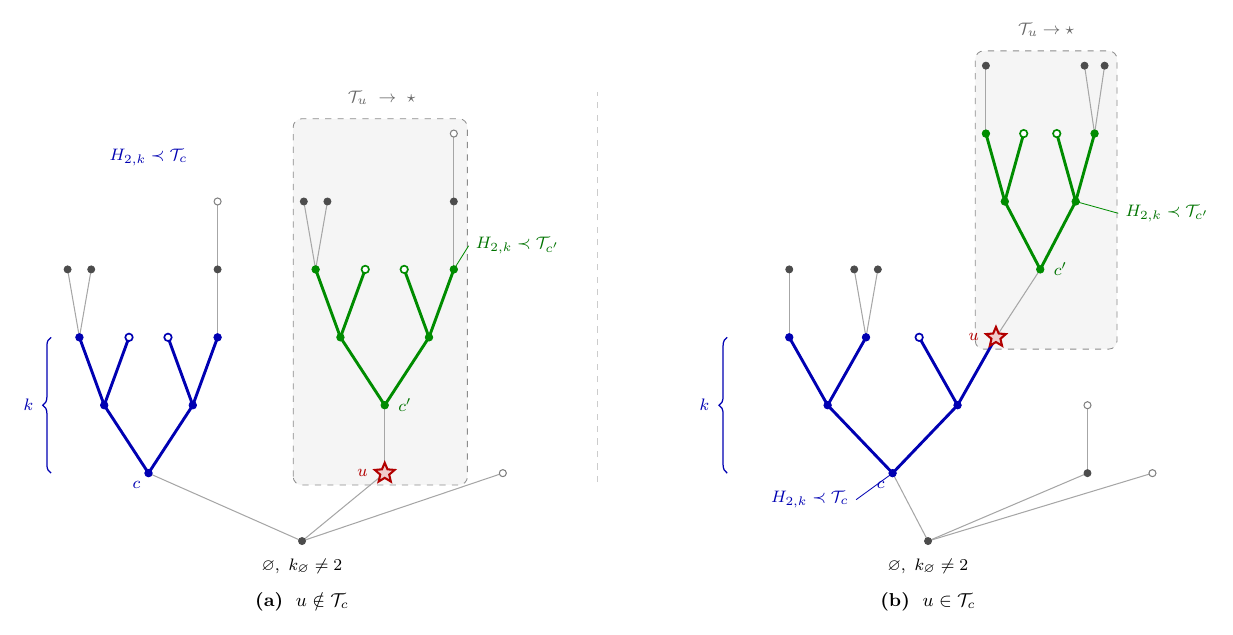}
\caption{The two cases in the proof of Lemma~\ref{lem:C_rary}, in the case $r = 2$. Both the root and $u=u_{\ell+1}(\mathcal T)$ are
declumped occurrences: each has outdegree $\neq 2$ with a child ($c$, resp.
$c'$) whose fringe subtree supports a non-fringe copy of the complete binary tree of height at least $k$. $\operatorname{Cut}_\ell$ replaces $\mathcal T_u$
(dashed box) by a marked leaf $\star$. \emph{(a)} $u$ does not lie in the fringe subtree of $c$, so cutting $\mathcal T_u$ leaves the root witness untouched. \emph{(b)} $u$ lies in the fringe subtree of $c$; since $\deg(u)\neq r$, $u$ cannot be an internal vertex of the copy of the $r$-ary tree. Hence, $u$ lies at distance at least $k$ from $c$ and replacing $\T_u$ with an arbitrary finite plane tree preserves the property $H_{r,k}\prec \T_c$.} 
\label{fig:grafting_binary}
\end{figure}

\subsection{Proof of Theorem \ref{thm:max_string}}

\begin{proof}[Proof of Theorem \ref{thm:max_string}]
Fix $\eta\in(0,1-1/\alpha)$ and set
$M_n=\lfloor n^{1-\eta}\rfloor$. All applications of Assumptions
\ref{hyp:S}, \ref{hyp:M} and \ref{hyp:C}  below are made with
respect to this cutoff sequence.

  We first treat the case $r=1$. Let $t\in\mathbb R$ and set
  $m_n=\lfloor t_n^*+t\rfloor$. Since
  $t_n^*=(\log n+\log(1-\mu_1))/\log(1/\mu_1)$, we have
  $$
    n(1-\mu_1)\pi_{m_n+1}
    =
    n(1-\mu_1)\mu_1^{m_n+1}
    =
    \mu_1^{t+1-\{t_n^*+t\}}.
  $$
Set $h_n=m_n+1$. Since $h_n=O(\log n)$, Lemmas
\ref{lem:S_rary}\textup{(i)} and \ref{lem:C_rary} verify respectively
Assumptions \ref{hyp:S} for $G_k$ and \ref{hyp:C} for $\widehat G_k$.
Take $B=\mathbb Z_+\setminus\{1\}$, so $c_B=1-\mu_1$. The propagation
condition holds because a parent degree outside $B$ is unary and therefore
extends a unary witness from its child. Since $\pi_{h_n}=O(1/n)$,
Corollary \ref{cor:declump_zero} gives
$$
\P(N_{m_n+1}(\mathcal T^n)=0)
-\exp\left(-\mu_1^{t+1-\{t_n^*+t\}}\right)\longrightarrow0.
$$
The equivalence
$\{H_1(\mathcal T^n)\le m_n\}=\{N_{m_n+1}(\mathcal T^n)=0\}$
proves \textup{(i)}.

  We now assume that $r\ge2$. Set $m_n=\lfloor t_n^{**}\rfloor$ and
  $\theta_n=\{t_n^{**}\}$. Since
  $r^{t_n^{**}}=(r-1)\log n/\log(1/\mu_r)$, for every fixed
  $j\in\mathbb Z$,
  $$
    n\pi_{m_n+j}
    \sim
    \mu_r^{-1/(r-1)}\,n^{1-r^{j-\theta_n}}.
  $$

Let $(h_n)$ satisfy $r^{h_n}=O(\log n)$. Lemma
\ref{lem:S_rary}\textup{(i)} and Corollary
\ref{cor:SM}\textup{(ii)} give
$\E[N_{h_n}(\mathcal T^n)]\sim n\pi_{h_n}$. If
$n\pi_{h_n}\to\infty$, Corollary \ref{cor:SM}\textup{(i)}, Lemma
\ref{lem:S_rary}\textup{(ii)}, and Corollary \ref{cor:Non-fringe Poisson} \textup{(ii)} give
$N_{h_n}(\mathcal T^n)\to\infty$ in probability. Applying Lemma \ref{lem:ineqs} with
$$
H_n=H_r(\mathcal T^n),
\qquad
N_n(h)=L_n(h)=N_h(\mathcal T^n)
$$  yields the two
implications
\begin{equation}
\label{eq:threshold_rary}
n\pi_{h_n}\to0\Longrightarrow
\P(H_r(\mathcal T^n)\ge h_n)\to0,
\qquad
n\pi_{h_n}\to\infty\Longrightarrow
\P(H_r(\mathcal T^n)\ge h_n)\to1.
\end{equation}

 We now prove the two-point concentration. Along the indices for which
  $\theta_n\le1/2$, the preceding identity gives
  $n\pi_{m_n+1}\to0$ and $n\pi_{m_n-1}\to\infty$, uniformly over these
  indices. Moreover, $r^{m_n-1}=O(\log n)$. The two estimates above therefore
  give, by \eqref{eq:threshold_rary}, $
    \P(H_r(\mathcal T^n)\in\{m_n-1,m_n\})\to1$
  along these indices.

  Along the indices for which $\theta_n>1/2$, we similarly have
  $n\pi_{m_n+2}\to0$, $n\pi_{m_n}\to\infty$, and
  $r^{m_n}=O(\log n)$. Hence $
    \P(H_r(\mathcal T^n)\in\{m_n,m_n+1\})\to1$
  along these indices. Combining the two sets of indices yields
  $\P(H_r(\mathcal T^n)\in J_n)\to1$.

  Finally, assume that
  $\liminf_n\min(\theta_n,1-\theta_n)>0$. Then there exists
  $\delta>0$ such that $\delta\le\theta_n\le1-\delta$ for all sufficiently
  large $n$. Consequently, $n\pi_{m_n+1}\to0$ and
  $n\pi_{m_n}\to\infty$, while $r^{m_n}=O(\log n)$. Applying again the two
  implications in \eqref{eq:threshold_rary} gives
  $\P(H_r(\mathcal T^n)\ge m_n+1)\to0$ and
  $\P(H_r(\mathcal T^n)\ge m_n)\to1$. Therefore $
    \P(H_r(\mathcal T^n)=m_n)\to1$.

  This completes the proof of Theorem \ref{thm:max_string}.
\end{proof}

%==================================================================
\section{Leaf-height of large trees}
\label{sec:leaf_height_counts}
%==================================================================

In this section we establish Theorem \ref{thm:max_lh}. We use the declumped
Poisson approximation of Theorem \ref{thm:PoissonApproximation2} in the
non-degenerate cases, and the complete $\kappa$-ary result of Theorem
\ref{thm:max_string} in the degenerate case (where $\mu$ is supported on only two integers). Recall that, given a plane tree
$T$, the leaf-height $\lambda_v(T)$ of a vertex $v\in T$ is the graph distance, in number of edges, from $v$ to its closest leaf descendant; in particular, a leaf has leaf-height $0$. We also recall that $\lambda(T)=\max_{v\in T}\lambda_v(T)$.

Set $\kappa=\min\{i\ge1:\mu_i>0\}$ and take
$$
  G_k(T)=\mathbbm{1}_{\{\lambda_\varnothing(T)\ge k\}},
  \qquad
  \widehat{G}_k(T)
  =
  \mathbbm{1}_{\{k_\varnothing(T)\ge \kappa+1\}}
  \mathbbm{1}_{\{\exists u\in\Gamma_\varnothing(T):G_k(T_u)=1\}}.
$$
We set
$$
  \pi_k=\P(G_k(\mathcal T)=1)
  =
  \P(\lambda_\varnothing(\mathcal T)\ge k),
  \qquad
  \widehat{\pi}_k=\P(\widehat{G}_k(\mathcal T)=1),
$$
with the convention $\pi_k=1$ for $k\le0$.

We first establish asymptotics for the leaf-height of the root.

\subsection{Leaf-height asymptotics}
\label{sec:ell_asymp}

\begin{lemma}
  \label{lem:ell_asymp}
  \leavevmode
  \begin{enumerate}
    \item[(i)] If $\mu_1>0$, there exists $C_\mu>0$ such that
    $\pi_n\sim C_\mu\mu_1^n$ as $n\to\infty$.
    \item[(ii)] If $\mu_1=0$, there exists $D_\mu\in(0,1)$ such that
    $
      \pi_n
      \sim
      \mu_\kappa^{-1/(\kappa-1)}D_\mu^{\kappa^n}
    $
    as $n\to\infty$.
  \end{enumerate}
\end{lemma}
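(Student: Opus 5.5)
The proof rests on a recursion for $\pi_k=\P(\lambda_\varnothing(\mathcal T)\ge k)$ in terms of the offspring generating function. The root has leaf-height at least $k\ge1$ if and only if it has at least one child and every child subtree has root leaf-height at least $k-1$. By the branching property,
$$
\pi_k=\sum_{j\ge1}\mu_j\pi_{k-1}^j=\varphi(\pi_{k-1})-\mu_0, \qquad \pi_0=1,
$$
where $\varphi(s)=\sum_{j\ge0}\mu_js^j$. Set $g(s)=\varphi(s)-\mu_0=\sum_{j\ge\kappa}\mu_js^j$. Then $\pi_k=g^{\circ k}(1)$ is the orbit of $1$ under $g$. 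We have $g(1)=1-\mu_0<1$, and $g(s)<s$ on $(0,1]$ because $\varphi(s)\ge s$ by criticality and convexity. Hence $\pi_k\downarrow0$, since $0$ is the only fixed point of $g$ in $[0,1]$. Both parts are therefore statements about iterating an analytic map near its fixed point $0$.

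\textbf{Case (i), $\mu_1>0$.} Here $g'(0)=\mu_1\in(0,1)$, and $g(s)=\mu_1 s+O(s^2)$ as $s\downarrow0$. Write $\pi_{k+1}/\pi_k=\mu_1(1+\varepsilon_k)$ with $0\le\varepsilon_k\le C\pi_k$. The bound $\varepsilon_k\le C\pi_k$ uses only that $g$ is smooth near $0$, which holds because $\sum_j j^2\mu_js^{j-2}<\infty$ for $s<1$. Since $\pi_k$ decays geometrically, $\sum_k\varepsilon_k<\infty$. Therefore $\pi_k/\mu_1^k=\prod_{i<k}(1+\varepsilon_i)$ is increasing and bounded, and it converges to some $C_\mu\in(0,\infty)$. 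This is the classical Koenigs linearization argument.

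\textbf{Case (ii), $\mu_1=0$, so $\kappa\ge2$.} Near $0$ we have $g(s)=\mu_\kappa s^\kappa(1+\delta(s))$ with $0\le\delta(s)\le Cs$. Put $c=\mu_\kappa^{1/(\kappa-1)}$ and $y_k=c\,\pi_k$. Then $y_{k+1}=y_k^\kappa(1+\delta(\pi_k))$, so
$$
\kappa^{-(k+1)}\log y_{k+1}=\kappa^{-k}\log y_k+\kappa^{-(k+1)}\log(1+\delta(\pi_k)).
$$
The correction terms are nonnegative and summable, so $\kappa^{-k}\log y_k$ converges to a limit $\log D_\mu$. The key estimate is that the remaining tail satisfies $\sum_{i\ge k}\kappa^{-(i+1)}\log(1+\delta(\pi_i))=O(\kappa^{-k}\pi_k)$, which holds because $\pi_i$ decays super-exponentially. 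Consequently $\log y_k=\kappa^k\log D_\mu+o(1)$, that is, $\pi_k\sim c^{-1}D_\mu^{\kappa^k}=\mu_\kappa^{-1/(\kappa-1)}D_\mu^{\kappa^k}$. This is the Böttcher-coordinate argument.

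It remains to check $D_\mu\in(0,1)$. The bound $D_\mu<1$ follows because for some large $k_0$ we have $y_{k_0}<1$, and afterwards the corrections are tiny. Concretely, $\log D_\mu\le\kappa^{-k_0}\log y_{k_0}+\sum_{i\ge k_0}\kappa^{-(i+1)}C\pi_i<0$ once $k_0$ is large. The bound $D_\mu>0$ holds because the limit is finite, since the increments are summable.

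The main obstacle is making the error bounds $\delta(s)\le Cs$ and $\varepsilon_k\le C\pi_k$ uniform, and in case (ii) controlling the tail sum so that the error is $o(1)$ in $\log\pi_k$ rather than merely $o(\kappa^k)$. Both reduce to finiteness of $g''$ on $[0,s_0]$ for some $s_0<1$, which holds for any offspring law without moment assumptions.
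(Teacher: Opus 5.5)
Your proof is correct and follows essentially the paper's argument: the recursion $\pi_{k+1}=g(\pi_k)$, the telescoping product of factors $1+O(\pi_i)$ with summable corrections when $\mu_1>0$, and the normalized logarithm $\kappa^{-k}\log(\mu_\kappa^{1/(\kappa-1)}\pi_k)$ with summable increments when $\mu_1=0$. Your tail estimate $O(\kappa^{-k}\pi_k)$ in fact needs only the monotonicity $\pi_i\le\pi_k$ for $i\ge k$, which is a bit leaner than the paper's bound $\pi_n\le A\xi^{\kappa^n}$. One small slip: $g(s)<s$ on $(0,1]$ does not follow from $\varphi(s)\ge s$; it follows from $g(s)\le(1-\mu_0)s$ together with $\mu_0>0$, which is forced by criticality and $\mu_1\neq1$.
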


\begin{proof}
Let $G_\mu(x)=\sum_{j\ge0}\mu_jx^j$, set
$g(x)=G_\mu(x)-\mu_0$, and write $x_n=\pi_n$. The branching
property gives $x_{n+1}=g(x_n)$, and extinction of the critical
non-degenerate process gives $x_n\downarrow0$.

If $\mu_1>0$, then $g(x)=\mu_1x(1+O(x))$. For any
$\rho\in(\mu_1,1)$, eventually $x_{n+1}\le\rho x_n$, whence
$\sum_nx_n<\infty$. Therefore
$$
\frac{x_n}{\mu_1^n}
=\frac{x_N}{\mu_1^N}
\prod_{j=N}^{n-1}\frac{g(x_j)}{\mu_1x_j}
\longrightarrow C_\mu\in(0,\infty),
$$
because the factors are $1+O(x_j)$.

Assume now that $\mu_1=0$ and put $a=\mu_\kappa$. We have
$$
g(x)=ax^\kappa(1+\varepsilon(x)),
\qquad \varepsilon(x)=O(x).
$$
Choose $\gamma\in(a,1)$. Iterating
$x_{n+1}\le\gamma x_n^\kappa$ from a sufficiently large index gives
constants $A>0$ and $\xi\in(0,1)$ such that
\begin{equation}
\label{eq:ell_superexp}
x_n\le A\xi^{\kappa^n}.
\end{equation}
Set $
y_n=a^{1/(\kappa-1)}x_n$ and $v_n=\kappa^{-n}\log y_n.
$
Then
$$
v_{n+1}-v_n
=\kappa^{-(n+1)}\log(1+\varepsilon(x_n)).
$$
By \eqref{eq:ell_superexp}, the series of increments is absolutely
convergent. Write
$v_n\to\log D_\mu$. The same bound shows that $\log D_\mu<0$, so
$D_\mu\in(0,1)$. Finally,
\begin{align*}
\left|\log y_n-\kappa^n\log D_\mu\right|
&\le
C\kappa^n\sum_{j\ge n}\kappa^{-j}\xi^{\kappa^j}
\longrightarrow0.
\end{align*}
Thus $y_n\sim D_\mu^{\kappa^n}$, or equivalently
$
\pi_n
\sim\mu_\kappa^{-1/(\kappa-1)}D_\mu^{\kappa^n}
$. This completes the proof.
\end{proof}

\begin{remark}
  Set $
    \ell_n=\P(\lambda_\varnothing(\mathcal T)=n)
    =
    \pi_n-\pi_{n+1}$. Then 
    $$
  \textrm{if } \mu_{1}>0, \quad  \ell_n
    \sim
    (1-\mu_1)C_\mu\mu_1^n,  \qquad \textrm{if } \mu_{1}=0,  \quad   \ell_n
    \sim
    \pi_n
    \sim
    \mu_\kappa^{-1/(\kappa-1)}D_\mu^{\kappa^n}.
  $$
  Indeed, if $\mu_{1}>0$ we have $\pi_{n+1}/\pi_n\to\mu_1$ and if $\mu_{1}=0$ then  $\kappa\ge2$ and
  $\pi_{n+1}=g(\pi_n)\sim
  \mu_\kappa\pi_n^\kappa$, so
  $\pi_{n+1}/\pi_n\to0$.  This extends \cite[Lemma 9]{DGZ23}, which shows that  $\ell_n= (\mu_{1}+o(1))^{n}$ when $\mu_{1}>0$ and that $\log \ell_{n}=\Theta (\kappa^{n})$ when $\mu_{1}=0$. For our application we need the more precise estimates of Lemma \ref{lem:ell_asymp}.
\end{remark}

\subsection{Checking \ref{hyp:S} and \ref{hyp:C}}

We set
$$
  c
  =
  \E\left[
    k_\varnothing(\mathcal T)
    \mathbbm{1}_{\{k_\varnothing(\mathcal T)\ge\kappa+1\}}
  \right]
  =
  1-\kappa\mu_\kappa.
$$
In this subsection we assume $c>0$. The degenerate case $c=0$ corresponds to
$\mu$ supported on $\{0,\kappa\}$ and is treated by Theorem \ref{thm:max_string}, since in this case a
  vertex has leaf-height at least $k$ if and only if it supports a complete
  $\kappa$-ary subtree of height $k$.

We first verify Assumption \ref{hyp:S} for
$G_k(T)=\mathbbm{1}_{\{\lambda_\varnothing(T)\ge k\}}$. We set
$$
  N_k(T)=\sum_{v\in T}G_k(T_v).
$$

\begin{lemma}
\label{lem:S_lh}
Let $(k_n)$ be such that $k_n=n^{o(1)}$ if $\kappa=1$, and
$\kappa^{k_n}=n^{o(1)}$ if $\kappa\ge2$. Fix
$\eta\in(0,1-1/\alpha)$ and set
$M_n=\lfloor n^{1-\eta}\rfloor$. Then:
\begin{enumerate}
\item[(i)] $(G_k)$ satisfies Assumption \ref{hyp:S} along $(k_n)$ with
respect to $(M_n)$;
\item[(ii)] $\E[N_{k_n}(\mathcal T^n)]\sim n\pi_{k_n}$.
\end{enumerate}
\end{lemma}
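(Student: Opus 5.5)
The plan is to mimic the proof of Lemma \ref{lem:S_rary}. We exhibit, under $\P_k=\P(\,\cdot\mid\lambda_\varnothing(\mathcal T)\ge k)$, a skeleton decomposition $|\mathcal T|=C_k+\sum_{i\le L_k}|\mathcal T^{(i)}|$ with $\E_{k_n}[C_{k_n}+L_{k_n}]=n^{o(1)}$. Part (i) will then follow from Lemma \ref{lem:critM}, and part (ii) from Corollary \ref{cor:SM}(ii).

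\emph{The natural skeleton.} The event $\{\lambda_\varnothing(T)\ge k\}$ depends only on the first $k$ generations. It says that every vertex at depth $<k$ is internal, i.e.\ $T$ has no leaf at depth $<k$. Let $C_k$ be the number of vertices of $\mathcal T$ at depth $<k$, and let $L_k$ be the number of vertices at depth exactly $k$.

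By the branching property, conditionally on the first $k$ generations (hence on $(C_k,L_k)$ and on the event $G_k(\mathcal T)=1$), the fringe subtrees rooted at depth-$k$ vertices are i.i.d.\ copies of $\mathcal T$. This gives the required identity $|\mathcal T|=C_k+\sum_{i=1}^{L_k}|\mathcal T^{(i)}|$ under $\P_k$.

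\emph{Bounding $\E_k[C_k+L_k]$.} Write $Z_j$ for the generation sizes. On $\{\lambda_\varnothing\ge k\}$ we have $Z_j\ge1$ for $j\le k$, and $C_k+L_k=\sum_{j=0}^{k}Z_j$. The difficulty is that conditioning on no early leaf tilts the offspring law toward large degrees, so a priori the conditional means could blow up.

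I expect the many-to-one formula to handle this:
$$\E[Z_j\mathbbm{1}_{\{\lambda_\varnothing\ge k\}}]\le \E[Z_j\mathbbm 1_{\{\text{no leaf at depth}<j\}}]\,\pi_{k-j}.$$
This holds because the subtree of each depth-$j$ vertex must have root leaf-height $\ge k-j$, and those subtrees are independent of the first $j$ generations.

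Next, $\E[Z_j\mathbbm 1_{\{\text{no leaf at depth}<j\}}]$ is the expected number of depth-$j$ vertices along whose ancestral line there is no leaf. On the event of no leaf at depth $<j$, every depth-$j$ vertex has all its ancestors internal, so this quantity is bounded by the corresponding quantity with the leaf-free condition imposed only on ancestors and on generations. I would rather use the cleaner bound
$$\E[Z_j\mathbbm 1_{\{\lambda_\varnothing\ge k\}}]\le \pi_{k-j}\,\P(\lambda_\varnothing\ge j)\,\sup(\cdot),$$
and I plan to make this rigorous as follows. Fix a depth-$j$ vertex $v$ (via the spine/size-biased tree) and require every ancestor of $v$ and every sibling subtree hanging off the spine to have leaf-height $\ge k-\text{depth}$. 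Under the size-biased law, the spine degrees are $\hat\mu_d=d\mu_d$, and the off-spine subtrees are independent copies of $\mathcal T$. This yields
$$\E[Z_j\mathbbm 1_{\{\lambda_\varnothing\ge k\}}]=\E\Big[\prod_{i=0}^{j-1}\pi_{k-i-1}^{\hat D_i-1}\Big]\pi_{k-j}.$$
Comparing with $\pi_k=\E[\pi_{k-1}^{D}]$ via the recursion $\pi_{m+1}=g(\pi_m)$, each spine factor is $g'(\pi_{k-i-1})$. This gives $\E[Z_j\mathbbm 1]=\pi_{k-j}\prod_{i=1}^{j}g'(\pi_{k-i})$, and therefore
$$\E_k[Z_j]=\frac{\pi_{k-j}}{\pi_k}\prod_{i=1}^{j}g'(\pi_{k-i}).$$
Now use $g'(x)\le g(x)/x\cdot\kappa'$-type bounds. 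Since $g(x)/x$ is increasing and $g'(x)\le$ a constant multiple of $g(x)/x$ near $0$ (for $g(x)\sim\mu_\kappa x^\kappa$ we get $xg'(x)/g(x)\to\kappa$), we have $g'(\pi_{m})\le (\kappa+o(1))\pi_{m+1}/\pi_m$. The product then telescopes to $(\kappa+o(1))^{j}\pi_k/\pi_{k-j}$, up to the finitely many indices where $\pi_m$ is not small, which contribute a constant factor $C^{K_0}$.

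This gives $\E_k[Z_j]\le C\,(\kappa+\delta)^j$, and hence $\E_k[C_k+L_k]\le C k(\kappa+\delta)^{k}$. For $\kappa=1$ we have $xg'/g\to1$, so the bound is $C k e^{\delta k}$, which is $n^{o(1)}$ when $k_n=n^{o(1)}$ only if $\delta k_n$ is controlled. I expect this to be the main obstacle: for $\kappa=1$ one needs $\prod(1+O(\pi_m))$, and $\sum\pi_m<\infty$ gives a uniform constant, so $\E_k[Z_j]\le C$ and $\E_k[C_k+L_k]\le Ck_n=n^{o(1)}$. For $\kappa\ge2$ the factors $x g'(x)/g(x)=\kappa+O(x)$ are summable as well, so the bound is $C\kappa^{k_n}=n^{o(1)}$ with room to spare. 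Lemma \ref{lem:critM} then gives (i), and Corollary \ref{cor:SM}(ii) gives (ii).
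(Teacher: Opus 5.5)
Your proposal is correct and is essentially the paper's proof. You use the same first-$k$-generations skeleton ($C_k=\sum_{i<k}Z_i$, $L_k=Z_k$), and your many-to-one identity $\E_k[Z_j]=\frac{\pi_{k-j}}{\pi_k}\prod_{i=1}^{j}g'(\pi_{k-i})$ is exactly the paper's product $m_k m_{k-1}\cdots m_{k-j+1}$ with $m_s=\pi_{s-1}g'(\pi_{s-1})/\pi_s$; the paper obtains that product instead by viewing the conditioned first $k$ generations as an inhomogeneous Galton--Watson process with offspring law $\mu_j\pi_{s-1}^j/\pi_s$. The conclusion via $m_s=\kappa+O(\pi_{s-1})$, $\sum_s\pi_s<\infty$, Lemma \ref{lem:critM} and Corollary \ref{cor:SM}(ii) is also the same. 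As you noticed, the intermediate $(\kappa+\delta)^j$ bound should be dropped from a write-up, since only the summable-error version gives $n^{o(1)}$ when $\kappa=1$.
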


\begin{proof}
  Work under $\P_k=\P(\,\cdot\mid \lambda_\varnothing(\mathcal T)\ge k)$
  and write $\E_k$ for the corresponding expectation. On the event
  $\{\lambda_\varnothing(\mathcal T)\ge k\}$, there are no leaves in
  generations $0,\ldots,k-1$. Let $Z_i$ be the number of vertices at
  generation $i$ in the first $k$ generations of $\mathcal T$, and set
  $C_k=\sum_{i=0}^{k-1}Z_i$ and $L_k=Z_k$. Conditionally on the first $k$
  generations, the fringe subtrees rooted at generation $k$ are independent
  Bienaym\'e trees. Hence one may construct copies $\mathcal T^{(1)},\mathcal T^{(2)},\ldots$ which,
conditionally on $(C_k,L_k)$, are independent and each have the same law
as $\mathcal T$, such that
  $$
    |\mathcal T|
    =
    C_k+\sum_{i=1}^{L_k}|\mathcal T^{(i)}|
    \qquad\textrm{under }\P_k.
  $$

In order to apply Lemma \ref{lem:critM}, it remains to estimate
$\E_k[C_k+L_k]$. Recall that
  $\pi_{s+1}=g(\pi_s)$, where $g(x)=G_\mu(x)-\mu_0$. A vertex which has to
  remain protected for another $s\ge1$ generations has offspring distribution
  $$
    \widetilde\mu_s(j)=\frac{\mu_j\pi_{s-1}^j}{\pi_s},
    \qquad j\ge1,
  $$
  and mean
  $$
    m_s
    =
    \sum_{j\ge1}j\widetilde\mu_s(j)
    =
    \frac{\pi_{s-1}g'(\pi_{s-1})}{\pi_s}.
  $$
  If $\kappa=1$, then $g(x)=\mu_1x(1+O(x))$ as $x\downarrow0$, and hence
  $m_s=1+O(\pi_{s-1})$. If $\kappa\ge2$, then
  $g(x)=\mu_\kappa x^\kappa(1+O(x))$, and hence
  $m_s=\kappa+O(\pi_{s-1})$. Since $\sum_s \pi_s<\infty$ by Lemma
  \ref{lem:ell_asymp}, it follows that, uniformly in $0\le i\le k$,
  $$
    \E_k[Z_i]
    =
    m_km_{k-1}\cdots m_{k-i+1}
    \le
    \begin{cases}
      C, & \kappa=1,\\
      C\kappa^i, & \kappa\ge2.
    \end{cases}
  $$
  Therefore
  $$
    \E_k[C_k+L_k]
    \le
    \begin{cases}
      Ck, & \kappa=1,\\
      C\kappa^k, & \kappa\ge2.
    \end{cases}
  $$
The assumption on $(k_n)$ gives
$\E_{k_n}[C_{k_n}+L_{k_n}]=n^{o(1)}$. Lemma \ref{lem:critM}, applied with
the cutoff $M_n=\lfloor n^{1-\eta}\rfloor$ fixed in the statement, therefore
gives Assumption \ref{hyp:S}, proving \textup{(i)}. Part \textup{(ii)}
follows from Corollary \ref{cor:SM}\textup{(ii)}, applied with the same
cutoff sequence.
\end{proof}

We next verify Assumption \ref{hyp:C} for the declumped functions
$\widehat{G}_k$. By the branching property,
$$
  \widehat{\pi}_k
  =
  \E\left[
    \mathbbm{1}_{\{k_\varnothing(\mathcal T)\ge\kappa+1\}}
    \left(1-(1-\pi_k)^{k_\varnothing(\mathcal T)}\right)
  \right].
$$
Since $\pi_k\to0$, dominated convergence gives
$
  \widehat{\pi}_k\sim c\pi_k$  as  $k\to\infty$.

We shall use the following elementary estimate.

\begin{lemma}
  \label{lem:lh_conditional_mark}
  There exists $C>0$ such that, for every $0\le m\le k$,
  $$
    \P(\widehat{G}_k(\mathcal T)=1,\ \lambda_\varnothing(\mathcal T)\ge m)
    \le
    C\pi_k\pi_m.
  $$
\end{lemma}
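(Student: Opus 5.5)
Conditioning on the root degree $D=k_\varnothing(\mathcal T)$ reduces the joint event to independent child subtrees, and the task is then to bound the resulting expectation by $C\pi_k\pi_m$. If $m=0$ the claim is just $\widehat\pi_k\le c\,\pi_k$, which is the upper bound from Lemma \ref{lem:root_declumping}. So assume $1\le m\le k$. On $\{\lambda_\varnothing(\mathcal T)\ge m\}$ the root has degree $D\ge1$ and all $D$ child subtrees have root leaf-height at least $m-1$. On $\{\widehat G_k(\mathcal T)=1\}$ we have $D\ge\kappa+1$ and at least one child subtree has root leaf-height at least $k$. A union bound over the marked child $i$ and the branching property give
\[
\P(\widehat{G}_k(\mathcal T)=1,\ \lambda_\varnothing(\mathcal T)\ge m)\le \sum_{d\ge\kappa+1}\mu_d\, d\,\pi_k\,\pi_{m-1}^{d-1},
\]
where we used $\pi_k\le\pi_{m-1}$: the marked child already satisfies leaf-height $\ge k\ge m-1$, and each of the other $d-1$ children contributes a factor $\pi_{m-1}$.

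It then remains to show that
\[
\sum_{d\ge\kappa+1} d\,\mu_d\,\pi_{m-1}^{d-1}\le C\pi_m \qquad\text{uniformly in } m\ge1.
\]
By the recursion $\pi_m=g(\pi_{m-1})=\sum_{j\ge\kappa}\mu_j\pi_{m-1}^j$ we have $\pi_m\ge\mu_\kappa\pi_{m-1}^\kappa$. Write $x=\pi_{m-1}\in(0,1]$. Since $d-1\ge\kappa$ for every term, the left side is at most
\[
x^{\kappa}\sum_{d\ge\kappa+1}d\mu_d\le x^\kappa\le \mu_\kappa^{-1}\pi_m,
\]
where $\sum_d d\mu_d=1$ is criticality. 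This gives the claim with $C=\max(c,\mu_\kappa^{-1})$, uniformly in $m\le k$.

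The main point to get right is that the non-marked children contribute at least $\kappa$ factors of $\pi_{m-1}$. This is exactly where the declumping degree condition $D\ge\kappa+1$ enters, because it lets us match the leading term $\mu_\kappa\pi_{m-1}^\kappa$ of $\pi_m$. Everything else is a direct computation, and no asymptotics from Lemma \ref{lem:ell_asymp} are needed.
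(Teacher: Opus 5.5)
Your proposal is correct and follows the paper's proof essentially step for step. The paper uses the same $m=0$ reduction to $\widehat\pi_k=O(\pi_k)$, the same union bound over the marked child giving $\pi_k\sum_{d\ge\kappa+1}d\mu_d\pi_{m-1}^{d-1}$, and the same final comparison with $\pi_m\ge\mu_\kappa\pi_{m-1}^{\kappa}$ via criticality. One small point of wording: the marked-child factor equals $\pi_k$ because $\{\lambda_\varnothing\ge k\}\subseteq\{\lambda_\varnothing\ge m-1\}$, not because of the numerical inequality $\pi_k\le\pi_{m-1}$.
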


\begin{proof}
  For $m=0$, the bound follows from
  $\P(\widehat{G}_k(\mathcal T)=1)=\widehat{\pi}_k=O(\pi_k)$. Assume now
  $m\ge1$. Conditionally on $D=k_\varnothing(\mathcal T)=d$, the event
  $\{\lambda_\varnothing(\mathcal T)\ge m\}$ requires all $d$ child subtrees
  to have root leaf-height at least $m-1$, while $\widehat{G}_k(\mathcal T)=1$
  requires $d\ge\kappa+1$ and at least one child subtree to have root
  leaf-height at least $k$. Hence, by a union bound,
  $$
    \P(\widehat{G}_k(\mathcal T)=1,\ \lambda_\varnothing(\mathcal T)\ge m)
    \le
    \pi_k
    \sum_{d\ge\kappa+1}d\mu_d\pi_{m-1}^{d-1}.
  $$
  Since $d-1\ge\kappa$ on the range $d\ge\kappa+1$ and $\mu$ is critical, the
  last display is bounded by
  $$
    \pi_k\pi_{m-1}^{\kappa}\sum_{d\ge0}d\mu_d
    =
    \pi_k\pi_{m-1}^{\kappa}.
  $$
  Finally, $\pi_m=g(\pi_{m-1})\ge\mu_\kappa\pi_{m-1}^{\kappa}$, so the last
  quantity is at most $\mu_\kappa^{-1}\pi_k\pi_m$.
\end{proof}

\begin{lemma}
\label{lem:C_lh}
Assume that $c>0$. For every sequence $(k_n)$, the functions
$(\widehat G_k)$ satisfy Assumption \ref{hyp:C} with respect to every cutoff
sequence $(M_n)$.
\end{lemma}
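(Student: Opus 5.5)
We will verify Assumption \ref{hyp:C} through a uniform two-point bound of the form
$$
\sup_{\ell\ge1}\P\bigl(|\mathcal T|\ge\ell+1,\ \widehat G_k(\mathcal T)=1,\ \widehat G_k(\mathcal T_{u_{\ell+1}(\mathcal T)})=1\bigr)\le C\widehat\pi_k^2,
$$
with $C$ independent of $k$ and $\ell$. Given this, summing over $1\le \ell\le M_n$ yields $CM_n\widehat\pi_{k_n}^2=o(n\widehat\pi_{k_n}^2)$ because $M_n=o(n)$, which is exactly \ref{hyp:C}. Here $\widehat\pi_k\asymp\pi_k$, since $c>0$ and $\widehat\pi_k\sim c\pi_k$. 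The uniform bound itself will come from the cut-and-graft Lemma \ref{lem:cut_graft} with $\mathcal E=\{T:\widehat G_k(T)=1\}$, followed by Lemma \ref{lem:lh_conditional_mark}.

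\textbf{Choice of the events.} For the index set we take $I=\{0,1,\ldots,k\}$ and $\mathcal Q_m=\{T:\lambda_\varnothing(T)\ge m\}$. Fix a double occurrence $T$ and write $u=u_{\ell+1}(T)$. Choose a root child $c$ with $\lambda_\varnothing(T_c)\ge k$.
\begin{itemize}
\item If $u\notin T_c$, replacing $T_u$ by any tree leaves the root degree and the witness $T_c$ unchanged. In this case we set $\tau(\operatorname{Cut}_\ell(T))=0$, and no condition on the grafted tree is needed.
\item If $u\in T_c$, let $d\ge1$ be the depth of $u$ below $c$. Then $\lambda_\varnothing(T_c)\ge k$ continues to hold after regrafting $S'$ at $u$ provided that (i) every leaf-descendant path from $c$ avoiding $u$ still has length $\ge k$, and (ii) $\lambda_\varnothing(S')\ge k-d$. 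Condition (i) is a property of the cut tree alone. We therefore set $\tau(A)=\max(k-d,0)$ for the relevant child.
\end{itemize}
To handle several witnesses consistently, we define $\tau$ canonically on the cut tree $A$, for example through the first root child in DFS order that witnesses the mark after cutting, with the star given leaf-height $\infty$. Then $\mathscr R$ is the set of cut trees for which $A[S']\in\mathcal E$ for all $S'\in\mathcal Q_{\tau(A)}$. Conditions (a) and (b) of Lemma \ref{lem:cut_graft} then hold by construction, since in the actual tree $T_u$ has leaf-height at least the required amount. If the canonical choice of witness gives any trouble, we fall back on taking $\tau(A)$ to be the minimum requirement over all witnessing children. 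This is still a function of $A$ alone, and it is still satisfied by $T_u$.

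\textbf{Conclusion of the bound.} Lemma \ref{lem:cut_graft} gives the bound
$$
\widehat\pi_k\max_{0\le m\le k}\P(\widehat G_k(\mathcal T)=1\mid\lambda_\varnothing(\mathcal T)\ge m).
$$
By Lemma \ref{lem:lh_conditional_mark}, the numerator of this conditional probability is at most $C\pi_k\pi_m$, and the denominator equals $\pi_m$. Hence the maximum is at most $C\pi_k\le C'\widehat\pi_k$, with constants uniform in $m$, $k$ and $\ell$. This gives the uniform two-point bound and hence \ref{hyp:C}.

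The main obstacle is the check of condition (b), specifically that the requirement on the grafted tree can be encoded by a single threshold $m=\tau(\operatorname{Cut}_\ell(T))$ depending only on the cut tree. The leaf-height condition is a minimum over all root-to-leaf paths, so it does decompose: paths through $u$ need leaf-height $\ge k-d$ from $u$, and paths avoiding $u$ are visible in $A$. This monotone structure is what makes the single threshold work.
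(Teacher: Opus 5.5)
Your route is the paper's: Lemma~\ref{lem:cut_graft} with $\mathcal E=\{T:\widehat G_k(T)=1\}$ and $\mathcal Q_m=\{T:\lambda_\varnothing(T)\ge m\}$ for $0\le m\le k$, then Lemma~\ref{lem:lh_conditional_mark}, then $\widehat\pi_k\gtrsim\pi_k$. There is one real error. The rule you propose first for $\tau$ (take the first root child in DFS order that witnesses in the cut tree, with $\star$ given leaf-height $\infty$) does \emph{not} satisfy condition (b), contrary to your claim that it holds ``by construction''.

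The problem is that cutting can create a spurious witness at $c_\star$, the root child above $\star$. This happens if $\lambda_\varnothing(T_{c_\star})<k$ only because of a short path through $u$, while a later root child witnesses the root event in $T$. Your rule then picks $c_\star$ and sets $\tau(A)=(k-d)_+$, and $T_u$ need not satisfy this. Concretely, take $\mu_1,\mu_2>0$ and $k\ge3$. Let the root have children $c_1,c_2$. Let $c_1$ have the single child $u$, and let $u$ have a leaf child and a child carrying a path of $k$ edges. Let $c_2$ carry a path of $k$ edges. Then $u=u_3(T)$, and both $\widehat G_k(T)=1$ (via $c_2$) and $\widehat G_k(T_u)=1$ hold. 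In $\operatorname{Cut}_2(T)$ the first witnessing child is $c_1=c_\star$, so $\tau=k-1$, but $\lambda_\varnothing(T_u)=1$.

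Your fallback is the correct definition and should be the primary one. Taking the minimal requirement over witnessing children gives $\tau(A)=0$ as soon as some root child other than $c_\star$ witnesses, and $\tau(A)=(k-d)_+$ otherwise. This is exactly the paper's $\tau$. Condition (b) then holds because the other root children have identical subtrees in $T$ and in $A$. If none of them witnesses, the witness in $T$ must be $c_\star$ itself, so $\lambda_\varnothing(T_{c_\star})\ge k$. This gives both $\lambda_\varnothing(T_u)\ge k-d$ and the fact that every unmarked leaf below $c_\star$ lies at distance at least $k$. The latter is what you need for $\operatorname{Cut}_\ell(T)\in\mathscr R$.

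Two minor points. First, $d=0$ must be allowed, since $u$ may itself be a root child. Second, $(k_n)$ is arbitrary, so you need uniformity in $k$ of $\widehat\pi_k\asymp\pi_k$. This follows most cleanly from $\widehat\pi_k\ge\P(k_\varnothing(\mathcal T)\ge\kappa+1)\,\pi_k$ for every $k$, as in the paper. The asymptotic $\widehat\pi_k\sim c\pi_k$ alone does not give it directly.
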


\begin{proof}
Fix $k,\ell$ and apply Lemma \ref{lem:cut_graft} with
$$
\mathcal E=\{T:\widehat G_k(T)=1\},
\qquad
\mathcal Q_m=\{T:\lambda_\varnothing(T)\ge m\},
\quad 0\le m\le k.
$$
For $A\in\mathscr A_\ell$, let $c_\star$ be the root child above $\star$
and put $h(A)=d_A(c_\star,\star)$.  Let $\mathscr R$ consist of  the  cut trees
with root degree at least $\kappa+1$ for which either
\begin{enumerate}
\item[(a)] a root child other than $c_\star$ has leaf-height at least $k$; or
\item[(b)] every unmarked leaf below $c_\star$ is at distance at least $k$
from $c_\star$.
\end{enumerate}
Set $\tau(A)=0$ in case \textup{(a)}, and
$\tau(A)=(k-h(A))_+$ otherwise.  In case \textup{(a)}, arbitrary re-grafting
preserves the other-child witness.  In case \textup{(b)}, a graft $S'$ with
$\lambda_\varnothing(S')\ge\tau(A)$ also preserves the witness through
$c_\star$, since $h(A)+\tau(A)\ge k$.  This verifies condition
\textup{(a)} of Lemma \ref{lem:cut_graft}.

On the double event write $\mathcal T=A[S]$.  If an unaffected child
witnesses the root event, then $\tau(A)=0$.  Otherwise the witness passes
through $c_\star$; all unmarked leaves below it are at distance at least
$k$, and
$\lambda_\varnothing(S)\ge(k-h(A))_+=\tau(A)$.  Thus condition
\textup{(b)} also holds.  Consequently,
$$
\P\bigl(|\mathcal T|\ge\ell+1,\ \widehat G_k(\mathcal T)=1,\
\widehat G_k(\mathcal T_{u_{\ell+1}(\mathcal T)})=1\bigr) \le\widehat\pi_k\max_{0\le m\le k}
\frac{\P(\widehat G_k(\mathcal T)=1,\
\lambda_\varnothing(\mathcal T)\ge m)}{\pi_m}
\le C\widehat\pi_k\pi_k,
$$
by Lemma \ref{lem:lh_conditional_mark}.  Finally, if
$D=k_\varnothing(\mathcal T)$ and $b=\P(D\ge\kappa+1)>0$, then
$$
\widehat\pi_k
=\E\left[\mathbbm{1}_{\{D\ge\kappa+1\}}
\bigl(1-(1-\pi_k)^D\bigr)\right]\ge b\pi_k.
$$
Hence the preceding probability is at most $C'\widehat\pi_k^2$, uniformly
in $\ell$.  Since $M_{n}=o(n)$, Assumption \ref{hyp:C} follows for
every $(k_n)$ and every cutoff sequence.
\end{proof}

\begin{figure}[h]
    \centering
    \includegraphics[width=0.8\linewidth]{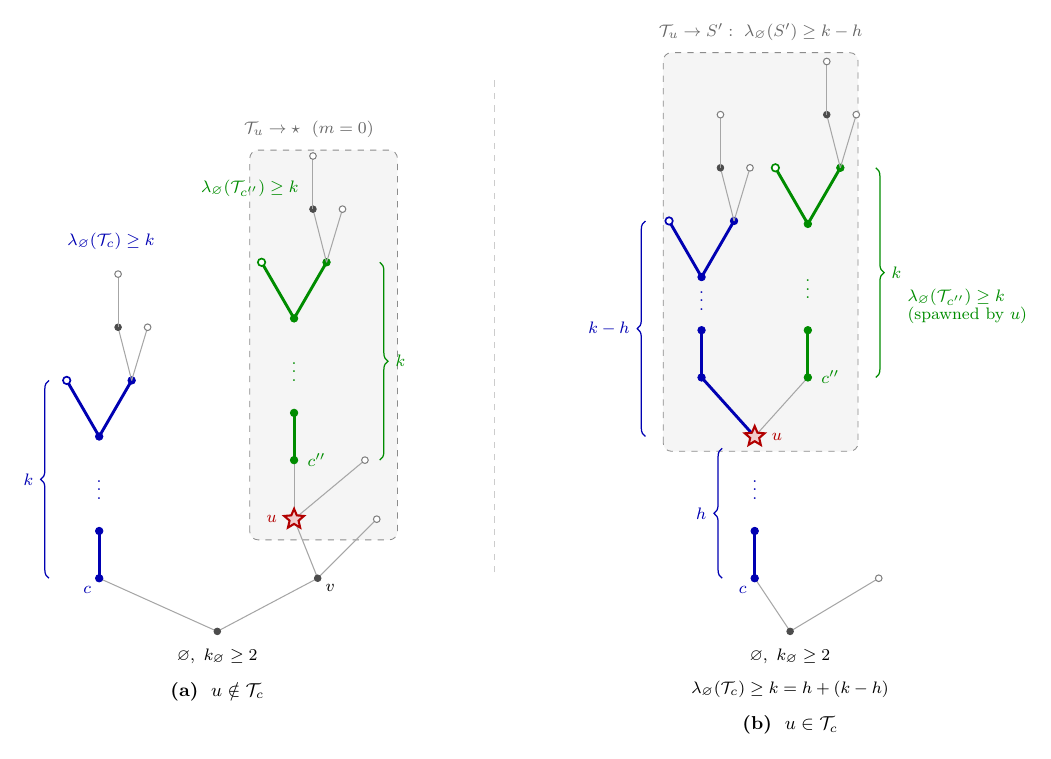}
\caption{The two cases in the proof of Lemma~\ref{lem:C_lh} (leaf-height), in the regime $\mu_1>0$. Both the root and $u=u_{\ell+1}(\mathcal T)$ are
declumped occurrences: each has outdegree $\ge2$ with a child ($c$, resp.
$c''$) whose subtree has root leaf-height at least $k$. A leaf-height witness is not a bare string: its first $k$ generations
contain no leaves and may branch, while arbitrary (grey) trees may be
grafted at generation $k$. $\operatorname{Cut}_\ell$ replaces $\mathcal T_u$
(dashed box) by a marked leaf. \emph{(a)} $u$ does not lie in the fringe subtree of $c$, so cutting $\mathcal T_u$ leaves the root witness untouched, and $m=0$. \emph{(b)} $u$ lies in the fringe subtree of $c$ at distance $h$ from $c$, so the two occurrences can overlap; since $\lambda_\varnothing(\mathcal T_c)\ge k$,
the subtree $\mathcal T_u$ is forced to have $\lambda_\varnothing(\mathcal T_u)\ge
k-h$. Hence only regrafts $S'$ with $\lambda_\varnothing(S')\ge m=(k-h)_{+}$ preserve the
root witness.}
\label{fig:grafting_lh}
\end{figure}

\subsection{Proof of Theorem \ref{thm:max_lh}}

\begin{proof}[Proof of Theorem \ref{thm:max_lh}]
Fix $\eta\in(0,1-1/\alpha)$ and set
$M_n=\lfloor n^{1-\eta}\rfloor$. All applications of Assumptions
\ref{hyp:S}, \ref{hyp:M} and \ref{hyp:C} below are made with respect to
this cutoff sequence.

  For $k\ge0$, write
  $$
    N_k(T)=\sum_{v\in T}G_k(T_v),
    \qquad
    \widehat N_k(T)=\sum_{v\in T}\widehat{G}_k(T_v).
  $$
  We shall use the deterministic implications
  $$
    \widehat N_k(T)\ge1\Rightarrow \lambda(T)\ge k,
    \qquad
    \lambda(T)\ge k\Leftrightarrow N_k(T)\ge1.
  $$

Suppose first that $\mu_1=0$ and $c=1-\kappa\mu_\kappa=0$. By the
definition of $\kappa$ and criticality, $\mu$ is then supported on
$\{0,\kappa\}$ and $\mu_\kappa=1/\kappa$. Every internal vertex has exactly
$\kappa$ children, so deterministically
$
\lambda(T)=H_\kappa(T)$.
Moreover,
$$
\pi_k=\mu_\kappa^{(\kappa^k-1)/(\kappa-1)}
=\mu_\kappa^{-1/(\kappa-1)}
\left(\mu_\kappa^{1/(\kappa-1)}\right)^{\kappa^k}.
$$
Consequently
$D_\mu=\mu_\kappa^{1/(\kappa-1)}$ and
$$
\log(1/D_\mu)=\frac{\log(1/\mu_\kappa)}{\kappa-1}.
$$
Thus $k_n^{**}$ is exactly the centering $t_n^{**}$ of Theorem
\ref{thm:max_string}\textup{(ii)} with $r=\kappa$, which proves
Theorem \ref{thm:max_lh}\textup{(ii)} in this case. It remains below to
consider $c>0$.

  Assume first that $\mu_1>0$. Then $\kappa=1$ and $c=1-\mu_1>0$. By Lemma
  \ref{lem:ell_asymp}, $\pi_k\sim C_\mu\mu_1^k$. Let $t\in\mathbb R$ and set
  $m_n=\lfloor k_n^*+t\rfloor$. Put
  $$
    \Lambda_n=\mu_1^{t+1-\{k_n^*+t\}}.
  $$
  By the definition of $k_n^*$, we have
  $$
    n(1-\mu_1)\pi_{m_n+1}\sim \Lambda_n.
  $$
Set $h_n=m_n+1$. Since $h_n=O(\log n)$, Lemmas
\ref{lem:S_lh}\textup{(i)} and \ref{lem:C_lh} verify the hypotheses of
Theorem \ref{thm:PoissonApproximation2}. Take
$B=\{2,3,\ldots\}$, so $c_B=1-\mu_1$. The propagation condition holds:
a vertex with a marked child and degree outside $B$ must have degree one,
and the leaf-height witness propagates through it. Since
$\pi_{h_n}=O(1/n)$, Corollary \ref{cor:declump_zero} gives
$$
\P(N_{m_n+1}(\mathcal T^n)=0)
-\exp\bigl(-n(1-\mu_1)\pi_{m_n+1}\bigr)\longrightarrow0.
$$
As $n(1-\mu_1)\pi_{m_n+1}-\Lambda_n\to0$ and
$\{\lambda(\mathcal T^n)\le m_n\}=\{N_{m_n+1}(\mathcal T^n)=0\}$, this is
exactly the assertion in \textup{(i)}.

Assume now that $\mu_1=0$, $c>0$, and
$\kappa=\min\{j\ge2:\mu_j>0\}$.
By Lemma
  \ref{lem:ell_asymp},
  $$
    \pi_k\sim \mu_\kappa^{-1/(\kappa-1)}D_\mu^{\kappa^k}.
  $$
  Set $m_n=\lfloor k_n^{**}\rfloor$ and $\theta_n=\{k_n^{**}\}$. Since
  $\kappa^{k_n^{**}}=\log n/\log(1/D_\mu)$, for every fixed
  $j\in\mathbb Z$,
  $$
    n\pi_{m_n+j}
    \sim
    \mu_\kappa^{-1/(\kappa-1)}
    n^{1-\kappa^{j-\theta_n}}.
  $$

For every level sequence below, $\kappa^{h_n}=O(\log n)$. Lemmas
\ref{lem:S_lh}\textup{(i)} and \ref{lem:C_lh}, together with Corollary
\ref{cor:SM}\textup{(ii)}, give
$
\E[N_{h_n}(\mathcal T^n)]\sim n\pi_{h_n}
$.
When $n\pi_{h_n}\to\infty$, Theorem
\ref{thm:PoissonApproximation2} gives
$\widehat N_{h_n}(\mathcal T^n)\to\infty$ in probability.  Applying Lemma \ref{lem:ineqs} with
$$
H_n=\lambda(\mathcal T^n),
\qquad
N_n(h)=N_h(\mathcal T^n),
\qquad
L_n(h)=\widehat N_h(\mathcal T^n).
$$ yields
\begin{equation}
\label{eq:threshold_lh}
n\pi_{h_n}\to0\Longrightarrow
\P(\lambda(\mathcal T^n)\ge h_n)\to0,
\qquad
n\pi_{h_n}\to\infty\Longrightarrow
\P(\lambda(\mathcal T^n)\ge h_n)\to1.
\end{equation}

If $\theta_n\le1/2$, then
$n\pi_{m_n+1}\to0$ and $n\pi_{m_n-1}\to\infty$;
\eqref{eq:threshold_lh} gives
$
\P(\lambda(\mathcal T^n)\in\{m_n-1,m_n\})\to1$.
If $\theta_n>1/2$, then
$n\pi_{m_n+2}\to0$ and $n\pi_{m_n}\to\infty$, and hence
$
\P(\lambda(\mathcal T^n)\in\{m_n,m_n+1\})\to1$.
This proves $\P(\lambda(\mathcal T^n)\in J_n)\to1$.
Finally, if
$\liminf_n\min(\theta_n,1-\theta_n)>0$, then
$n\pi_{m_n+1}\to0$ and $n\pi_{m_n}\to\infty$. A final application of
\eqref{eq:threshold_lh} gives
$
\P(\lambda(\mathcal T^n)=m_n)\to1
$.

  This completes the proof.
\end{proof}

\subsection{Perspectives on vertex leaf-height counts}
\label{ssec:lhperspectives}

We conclude with a possible extension of our results to counts of vertices at
large leaf-height levels.  For fixed $k$, limiting proportions of vertices with
protection number at least $k$ in conditioned Galton--Watson and simply
generated trees were obtained in \cite{DJ14}, and asymptotic
normality of the corresponding counts is covered by the general results for
additive fringe-tree functionals in \cite{Jan16}. See also
\cite{GGLS23} for the protection number of the root and of a
uniformly chosen vertex. Counts at an exact leaf-height, often formulated as
counts of vertices of a given rank, have also been studied in specific random
tree models; see, for example,
\cite{BP17} for random binary search
trees. Recent uniform exponential tail bounds for nearest-leaf distances in
size-conditioned simply generated trees are given in
\cite{MS26}. The bounded-mean
count regime considered below is complementary to  fixed-level and
extremal results.

Recall that $
  \pi_k
  =
  \P(\lambda_\varnothing(\mathcal T)\ge k)$ and that
  $
  \ell_k
  =
  \P(\lambda_\varnothing(\mathcal T)=k)
  =
  \pi_k-\pi_{k+1}$. 
For a sequence $(k_n)$ with $k_n\to\infty$, set
$$
  \Lambda_n^{=}
  =
  \#\{v\in\mathcal T^n:
       \lambda_v(\mathcal T^n)=k_n\},
  \qquad
  \Lambda_n^{\ge}
  =
  \#\{v\in\mathcal T^n:
       \lambda_v(\mathcal T^n)\ge k_n\}.
$$

The distinction between exact and exceedance counts is important. Vertices
with leaf-height at least $k$ may form clusters, notably along unary chains,
whereas a chain of nested exceedances typically contributes only one vertex
with leaf-height exactly $k$. Thus the exact-height count may be viewed as a
naturally declumped version of the exceedance count.

Assume first that $\mu_1>0$. Under the same microscopicity conditions as in
the preceding sections (for instance when $k_n=n^{o(1)}$) and provided
that $(n\ell_{k_n})$ is bounded, we expect
$$
  \dTV\left(
    \Lambda_n^{=},
    \Po(n\ell_{k_n})
  \right)
  \longrightarrow0  \quad\textrm{and}\quad
  \dTV\left(
    \Lambda_n^{\ge},
    \mathrm{CP}(n\ell_{k_n},\gamma_{\mu_1})
  \right)
  \longrightarrow0,
$$
where for $p\in[0,1)$, $\gamma_p$ is the geometric distribution on
$\{1,2,\ldots\}$ defined by $
  \gamma_p(j)
  =
  (1-p)p^{j-1}$ for $j \geq 1$ and
$\mathrm{CP}(\nu,\gamma_p)$ denotes the law of
$\sum_{i=1}^{P}J_i$, where $P$ is Poisson with parameter $\nu$ and,
conditionally on $P$, the variables $J_i$ are independent with law
$\gamma_p$.
The geometric jump distribution reflects the asymptotic length of the unary
chain above an exact-height occurrence. Notice that the mean of the proposed
compound Poisson law is
$$
  \frac{n\ell_{k_n}}{1-\mu_1}
  \sim
  n\pi_{k_n},
$$
as required.

When $\mu_1=0$, we have $\pi_{k+1}/\pi_k\to0$. Hence, in the bounded-mean
regime, occurrences above level $k_n$ should be negligible compared with
occurrences exactly at level $k_n$. We therefore expect
$$
  \dTV\left(
    \Lambda_n^{=},
    \Po(n\ell_{k_n})
  \right)
  \longrightarrow0,
  \qquad
  \dTV\left(
    \Lambda_n^{\ge},
    \Po(n\pi_{k_n})
  \right)
  \longrightarrow0.
$$
Since
$n|\pi_{k_n}-\ell_{k_n}|=n\pi_{k_n+1}\to0$, the second Poisson law may
equivalently be replaced by $\Po(n\ell_{k_n})$.

We expect that these results can be obtained by combining the Poisson
approximation developed in this paper with a marked version of the
declumping argument. We do not pursue these refinements here in order to keep
the length of the paper under control.

\bibliographystyle{alpha}
{\small
\bibliography{bibli.bib}
}

\end{document}